\theoremstyle{definition}
\newtheorem{The}{Theorem}[section]    
\newtheorem{Lem}[The]{Lemma}
\newtheorem{Cor}[The]{Corollary}
\newtheorem{Pro}[The]{Proposition}
\newtheorem{Cla}{Claim}
\newtheorem{Cas}{Case}
\newtheorem{Rem}[The]{Remark}
\theoremstyle{definition}
\numberwithin{equation}{section}
\begin{document}
\title{Excluded conformal minors of Birkhoff-von Neumann graphs with equal global forcing number and
maximum anti-forcing number
\footnote{This work is supported by NSFC (Grant No. 12271229) and the Scientific Research Startup Fund of Sichuan Normal University (Grant No. kyqd20260308)}
}
\author{Yaxian Zhang$^{1,2}$, Yan Wu$^1$ and Heping Zhang$^{1}$\thanks{Corresponding author.}}
\date{
   {\small $^1$School of Mathematics and Statistics, Lanzhou University, Lanzhou, Gansu 730000, P.R. China\\
           $^{2}$School of Mathematical Sciences, Sichuan Normal University, Chengdu, Sichuan 610068, P.R. China
   }\\
   {\small E-mails:\ yaxianzhang@sicnu.edu.cn, wuyan@emails.bjut.edu.cn, zhanghp@lzu.edu.cn}
}

\maketitle

\begin{abstract}
  Global forcing number and maximum anti-forcing number of matchable graphs (graphs with a perfect matching) were proposed in completely different situations with applications in theoretical chemistry.  Surprisingly for bipartite graphs and  some nonbipartite graphs as solid bricks (or  Birkhoff-von Neumann graphs) $G$,  the global forcing number $gf(G)$  is at least  the maximum anti-forcing number $Af(G)$. It is natural to consider when $gf(G)=Af(G)$ holds. For convenience, we call a matchable graph $G$ {\em strongly uniform} if each conformal matchable subgraph $G'$ always satisfies $gf(G')=Af(G')$.
  In this article, by applying the ear decomposition theorem and discussing the existence of  a Hamilton cycle with positions of chords,
  we give  ``excluded conformal minors'' and  ``structural'' characterizations of matchable bipartite graphs and
  Birkhoff-von Neumann graphs that are strongly uniform respectively.


    \vskip 0.1 in
    \noindent {\bf Keywords:} \ Perfect matching; Anti-forcing number; Global forcing number;
    \linebreak Birkhoff-von Neumann graph; Conformal minor; Hamilton cycle
    \medskip
\end{abstract}
\section{Introduction}
A perfect matching (or 1-factor) of a graph is a fundamental notion in graph theory, which coincides with a  \emph{Kekul\'e structure}
in organic chemistry \cite{CG} and a \emph{dimer configuration} in statistical physics \cite{Yan2008,Fisher1961,Kasteleyn1961}.
Many graphs have often  an enormous number of perfect matchings. For example, Esperet et al. \cite{EK} showed that  a cubic bridgeless graph has an  exponential many perfect matchings  with its order, confirming an old conjecture of Lov\'asz  and Plummer in 1970's.
Vuki\v cevi\'c and  Sedlar \cite{vukivcevic2004total} introduced the \emph{global} (or \emph{total}) \emph{forcing number} of a  graph $G$ as a graph parameter denoted by $gf(G)$, i.e.,  the smallest cardinality of an edge subset distinguishing all perfect matchings of $G$. Dosli\'c \cite{D.T2007Global} showed that this number cannot exceed the cyclomatic number (see Theorem \ref{gf(G)<c(G)}).
Forcing and anti-forcing ideas for a perfect matching of a graph originate from resonant structures in theoretical chemistry (cf.  Klein and Randi\'c \cite{klein1987innate}, and Li \cite{Li1997Hexagonal} and Vuki\v{c}evi\'c and Trinajsti\'c \cite{vukiveevic2007anti}). For a recent survey on this direction, see \cite{ZHLZ2025}.

An edge $e$ of $G$ is a \emph{forcing single edge} (or an \emph{anti-forcing edge}) if $G$ has a unique perfect matching avoiding $e$.
Generally, Lei et al. \cite{Lei2016Anti} and Klein and Rosenfeld \cite{Klein2014} independently defined the
\emph{anti-forcing number} of a single perfect matching $M$ of a graph $G$ as the smallest cardinality of an
edge subset whose removal results in a graph with  a unique  perfect matching $M$.
 The largest value over anti-forcing numbers of all perfect matchings of a graph $G$, called the {\sl maximum anti-forcing number} of $G$ and denoted by $Af(G)$, is significant in theoretical chemistry. For example, the maximum anti-forcing numbers of a hexagonal system and (4,6)-fullerene are equal to their Fries numbers, respectively (cf. \cite{Lei2016Anti,shi2017maximum}), which can measure the stability of the corresponding molecules \cite{Fries1927}.

 Surprisingly the first and third authors  \cite{Zhang2022} discovered a close connection between the global forcing number and the maximum anti-forcing number of a matchable bipartite graph.
 A graph $G$ is called {\sl matchable} if it has a  perfect matching. 

 \begin{The}\label{bipartite}
   {\rm \cite{Zhang2022}}
   For a matchable bipartite graph $G$,  $gf(G)\geq Af(G)$.
\end{The}

Such a connection also exists in many nonbipartite graphs.  The {\sl perfect matching polytope} $PM(G)$ of a  matchable graph $G$ is  the set of all convex linear combinations of incidence vectors of perfect matchings of $G$ in $\mathbb{R}^{E(G)}$. Edmonds \cite{edmonds1965maximum} showed that $PM(G)$ can be described by the set of  vectors $x\in \mathbb{R}^{E(G)}$ satisfying following linear inequalities:
\begin{center}\begin{description}
   \item[(1)] $x\geq 0$,
   \item[(2)] $x(\partial(v))=1, \mbox{for each vertex } v\in V(G)$,
   \item[(3)] $x(\partial(S))\geq 1, \mbox{for each odd subset $S$ of } V(G).$
 \end{description}
\end{center}
Then $G$   is called \emph{Birkhoff-von Neumann graph} (or BN-graph, simply)  if $PM(G)$ is characterized only by nonnegativity (1) and degree constraints (2) in the above inequalities. Due to the classical results of Birkhoff \cite{Birkhoff1976} and von Neumann \cite{von1953},
BN-graphs include all bipartite graphs. Balas \cite{Balas1981} gave a combinatorial characterization  for  BN-graphs.
Reed, Wakabayashi, and Carvalho, Lucchesi, Murty \cite{Car} respectively showed that a brick is BN if and only if it is solid in different methods.
%
A pair of disjoint odd cycles $(C_1,C_2)$ in $G$ is an \emph{odd conformal bicycle} if $G-V(C_1)-V(C_2)$ is matchable.

\begin{The}
   {\rm\cite{Balas1981}}
   \label{EBV}
   A graph $G$ is a BN-graph if and only if
   $G$ does not contain odd conformal bicycles.
 \end{The}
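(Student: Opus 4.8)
The plan is to restate BN-membership polyhedrally and then handle the two implications by, respectively, an explicit fractional point and the half-integral structure of the extreme points of the fractional perfect matching polytope. Write $P(G)=\{x\in\mathbb{R}^{E(G)}:x\ge 0,\ x(\partial(v))=1\ \text{for all }v\in V(G)\}$ for the polytope cut out by (1) and (2) alone. Every perfect matching satisfies (1)--(2), so $PM(G)\subseteq P(G)$ always, and by Edmonds' description $PM(G)=\{x\in P(G):x(\partial(S))\ge 1\ \text{for every odd }S\}$. Hence $G$ is a BN-graph exactly when every $x\in P(G)$ already satisfies all the odd-set inequalities (3); equivalently, $G$ \emph{fails} to be BN precisely when some $x\in P(G)$ violates $x(\partial(S))\ge 1$ for an odd set $S$.

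\emph{Necessity.} I would show that an odd conformal bicycle forces such a violation. Given disjoint odd cycles $C_1,C_2$ with a perfect matching $M$ of $H:=G-V(C_1)-V(C_2)$, define $x$ by $x_e=\tfrac12$ on the edges of $C_1$ and $C_2$, $x_e=1$ on the edges of $M$, and $x_e=0$ otherwise. Each vertex of $C_i$ lies on exactly two edges of $C_i$, each of $x$-value $\tfrac12$, and each vertex of $H$ lies on exactly one edge of $M$, so $x(\partial(v))=1$ everywhere and $x\in P(G)$. Taking the odd set $S=V(C_1)$, no positive edge of $x$ crosses $\partial(S)$, since the cycle edges stay inside $C_1$ or inside the disjoint $C_2$ and the edges of $M$ lie in $H$; thus $x(\partial(S))=0<1$, so $x\in P(G)\setminus PM(G)$ and $G$ is not BN.

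\emph{Sufficiency (contrapositive).} Assuming $G$ is matchable but not BN, I would produce an odd conformal bicycle. Pick an extreme point $x$ of $P(G)$ lying outside $PM(G)$. The integral points of $P(G)$ are exactly incidence vectors of perfect matchings, hence lie in $PM(G)$, so $x$ is non-integral; here I invoke the classical half-integrality of $P(G)$, namely that its vertices are half-integral, the $1$-edges forming a matching $M$ and the $\tfrac12$-edges forming a vertex-disjoint union of odd cycles $C_1,\dots,C_k$ covering precisely the vertices missed by $M$. Since $|V(G)|$ is even and $|V(G)|=\sum_i|V(C_i)|+2|M|$ with each $|V(C_i)|$ odd, the number $k$ is even, and as $x$ is non-integral, $k\ge 2$. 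When $k=2$ we finish immediately: $M$ is then a perfect matching of $G-V(C_1)-V(C_2)$, so $(C_1,C_2)$ is an odd conformal bicycle.

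The main obstacle is the case $k\ge 4$, where one must extract a single conformal pair while \emph{absorbing} the remaining odd cycles into a matching of the deleted graph. I would attack this by choosing, among all non-integral extreme points, one minimizing $k$, and then forcing $k=2$. Concretely, fix any perfect matching $N$ of $G$; since an odd cycle cannot be perfectly matched internally, each $C_i$ is joined to its outside by an odd, hence nonzero, number of $N$-edges, and splicing a near-perfect matching of $C_i$ together with such an external edge builds perfect matchings of unions of cycles. Routing these should pair the cycles off so that deleting one pair leaves the rest matched, realizing some $G-V(C_a)-V(C_b)$ as matchable; alternatively, a Tutte-type analysis of $H=G-V(C_1)-V(C_2)$, which already carries the fractional perfect matching $x|_H$ so that $i(H-U)\le|U|$ for all $U$, should either return an integral perfect matching of $H$ (giving the bicycle) or expose two internally fractionally matched odd components from which the pair is read off. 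Making this absorption rigorous, and in particular guaranteeing that \emph{some} pair among the $C_i$ can always be isolated as a conformal bicycle, is where I expect the real difficulty to lie.
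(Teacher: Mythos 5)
The paper offers no proof of this statement at all --- it is quoted directly from Balas's 1981 paper --- so your proposal has to be judged on its own merits rather than against anything in the article. Your necessity direction is complete and correct: the vector equal to $\tfrac12$ on $E(C_1)\cup E(C_2)$, $1$ on a perfect matching of $G-V(C_1)-V(C_2)$ and $0$ elsewhere lies in $P(G)$ and satisfies $x(\partial(V(C_1)))=0<1$, so $PM(G)\subsetneq P(G)$. The polyhedral reduction, the passage to a non-integral extreme point of $P(G)$, the appeal to half-integrality, and the case $k=2$ are likewise sound.

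The genuine gap is exactly where you flag it, the case $k\ge 4$, and none of the three routes you sketch closes it as stated. A single $N$-edge leaving a cycle $C_i$ cannot absorb $C_i$: its other end is typically an $M$-covered vertex whose $M$-partner then becomes exposed, so you must follow an entire $M$--$N$ alternating path. The Tutte-type analysis of $H=G-V(C_1)-V(C_2)$ for a \emph{fixed} pair fails outright: a fractional perfect matching of $H$ only bounds isolated vertices of $H-U$, not odd components, and indeed another cycle $C_3$ may be an odd component of $H$ (four triangles joined in a ring give $k=4$ where the pair $(C_1,C_3)$ is invalid while $(C_1,C_2)$ is valid), and the odd components exposed by a violated pair need not themselves form a valid pair. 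Finally, minimizing $k$ over \emph{extreme points}, as you propose, does not mesh with any merging argument, because the point produced by a merge need not be a vertex of $P(G)$. The correct completion: among all half-integral points of $P(G)$ whose support is a disjoint union of odd cycles $C_1,\dots,C_k$ plus a matching $M$ of the remaining vertices, take one minimizing $k\ge 2$; fix a perfect matching $N$ of $G$ and form $M\triangle N$. Its path components alternate between $N$ and $M$, end with $N$-edges, and have their endpoints exactly at the cycle vertices, each cycle vertex being the end of exactly one path. Since $|V(C_i)|$ is odd, the paths with both ends in $C_i$ cannot exhaust $V(C_i)$, so some path $P$ joins $u\in V(C_a)$ to $v\in V(C_b)$ with $a\ne b$; replacing the $M$-edges of $P$ by its $N$-edges and adding near-perfect matchings of $C_a$ and $C_b$ exposing $u$ and $v$ yields a new half-integral point with $k-2$ cycles, contradicting minimality when $k\ge 4$. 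Hence $k=2$, and the two cycles of the minimal configuration form the desired odd conformal bicycle. This absorption step is the missing core of the hard direction.
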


 Theorem \ref{bipartite} also holds for  BN-graphs.

\begin{The}
   {\rm \cite{Zhang2022}}
   \label{Gf>Af}
   Let $G$ be a matchable BN-graph.
   Then $gf(G)\geq Af(G)$.
\end{The}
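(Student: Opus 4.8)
The plan is to reduce the statement to the bipartite case (Theorem~\ref{bipartite}), using the excluded-structure description of BN-graphs (Theorem~\ref{EBV}) to control the non-bipartite part. I would first reformulate both parameters as minimum edge-transversals of cycle families. Writing $af(G,M)$ for the anti-forcing number of a single perfect matching $M$, deleting a set $S\subseteq E(G)\setminus M$ leaves $M$ as the unique perfect matching exactly when $S$ meets every $M$-alternating cycle; hence $af(G,M)$ is the least number of non-$M$ edges meeting all $M$-alternating cycles, and $Af(G)=\max_{M}af(G,M)$. Dually, $S\subseteq E(G)$ is a global forcing set iff it meets $M_{1}\triangle M_{2}$ for every pair of distinct perfect matchings, equivalently iff it meets every \emph{conformal cycle} $C$ (one with $G-V(C)$ matchable); so $gf(G)$ is the least number of \emph{arbitrary} edges meeting all conformal cycles. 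Since $af(G,M)\le Af(G)$, it suffices to prove $gf(G)\ge af(G,M)$ for a fixed $M$ attaining the maximum.

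The essential difficulty is visible already on the triangular prism $K_{3}\times K_{2}$, a non-BN graph: there the three $M$-alternating $4$-cycles pairwise share a single matching edge, so any transversal by non-matching edges needs one edge per cycle ($af=3$), whereas two matching edges suffice to hit all three four-cycles and the three remaining conformal six-cycles simultaneously, giving $gf=2<3$. Thus the inequality can genuinely fail, and a correct proof must show that in a BN-graph a global forcing set gains nothing by using matching edges. Concretely, I would start from a minimum global forcing set $S$ and try to re-route each matching edge of $S$ to a non-matching edge without increasing $|S|$, keeping a transversal of all $M$-alternating cycles; the prism shows that the obstruction to such a re-routing is precisely a cluster of $M$-alternating cycles meeting pairwise in matching edges in an ``odd'' pattern.

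For the bipartite case this obstruction cannot arise: orienting the matching edges and the non-matching edges oppositely turns the $M$-alternating cycles into the directed cycles of an auxiliary digraph, and the resulting orientation-consistency is exactly what forces the derived (non-$M$-alternating) conformal cycles to make matching edges unprofitable; this is the content of Theorem~\ref{bipartite}, which I would invoke rather than reprove. The crux, and the main obstacle, is to show that the same behaviour persists for BN-graphs. Here I would argue by contradiction using Theorem~\ref{EBV}: if the re-routing failed, the offending family of $M$-alternating cycles would exhibit an odd, prism-like intersection pattern from which one extracts two vertex-disjoint odd cycles whose removal leaves a matchable graph, i.e.\ an odd conformal bicycle, which a BN-graph does not contain. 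Pinning down exactly which alternating-cycle configurations force an odd conformal bicycle, and thereby certifying that every matching edge used by $S$ can be legally and economically re-routed, is the technical heart of the argument; once it is in place, $S$ restricts to an anti-forcing set for $M$ of size at most $|S|$, giving $gf(G)\ge af(G,M)=Af(G)$.
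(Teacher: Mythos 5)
First, a point of comparison: the paper does not prove this statement at all --- it is quoted from \cite{Zhang2022} --- so your attempt can only be judged on its own terms. Your setup is sound: the transversal reformulations are exactly Lemmas \ref{anti-forcing} and \ref{global-forcing}, the reduction to showing $gf(G)\ge af(G,M)$ for a single maximum-attaining $M$ is correct, and your prism computation ($gf=2<3=Af$, with the prism excluded as a BN-graph by Theorem \ref{EBV}) correctly isolates why BN-ness must enter. But what you have written is a plan, not a proof. The entire content of the theorem is the claim that in a BN-graph a minimum global forcing set $S$ can be converted into a transversal of all $M$-alternating cycles by non-$M$ edges of size at most $|S|$, and you defer precisely that step yourself (``pinning down exactly which alternating-cycle configurations force an odd conformal bicycle \dots is the technical heart''). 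No lemma is stated, no construction of the two vertex-disjoint odd cycles is given, and nothing verifies that deleting them leaves a matchable graph. Also, your opening promise to ``reduce to the bipartite case'' is never carried out; Theorem \ref{bipartite} ends up supplying only intuition, not a reduction.

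Moreover, the edge-by-edge re-routing you propose fails as stated, and your own example shows it. In the prism, with $M$ the three vertical edges and $S$ consisting of two of them, each edge of $S$ re-routes perfectly well in isolation: it is the unique $S$-edge on one $M$-alternating $4$-cycle and may be replaced by a non-matching edge of that cycle. Yet after both replacements the third $4$-cycle --- the one containing both edges of $S$ --- is uncovered. So the obstruction is not attached to a single matching edge; any correct argument must perform all replacements simultaneously and handle alternating cycles that $S$ meets only in several $M$-edges at once. Extracting the forbidden structure is also not routine: $M$-alternating cycles are even, and the symmetric difference of two even cycles sharing one edge is again even, so odd cycles cannot come from such local operations --- in the prism the two triangles are assembled from non-matching edges of all three $4$-cycles together. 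Bridging exactly this gap, from ``simultaneous re-routing fails'' to ``$G$ contains an odd conformal bicycle,'' is the theorem itself, and it is absent from your proposal.
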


It is natural to wonder when the equality holds. However it is quite difficult to determine all BN-graphs with the equality.
Similar to the definitions of perfect graphs \cite{CCLSV2005} and  digraph packing \cite{GT2011}, we give  further requirements on all matchable conformal subgraphs. A subgraph $H$ of a matchable graph $G$ is {\em conformal} if $G-V(H)$ is matchable \cite{CLM2005}. Conformal subgraphs are also called {\em nice} or {\em central} subgraphs \cite{RST1999}. We define a matchable graph $G$ to be \emph{strongly uniform graph} if for each conformal matchable subgraph $H$ of $G$,  $gf(H)=Af(H)$ always holds. A graph $J$ is called a {\em conformal minor} of a matchable graph $G$ if some bisubdivision $H$ of $J$ (defined in Section \ref{DeBi}) is a conformal subgraph of $G$. In this paper we give  ``excluded conformal minors'' and  ``structural'' characterizations of matchable bipartite graphs and
  BN-graphs that are strongly uniform respectively.

Inspired by the idea of excluding minors in \cite{NT2007,Little1974,FL2001,Chen20201}, we want to find some key conformal minors not meeting the equality.
We first find 4 non-strongly uniform bipartite graphs by bipartite ear decomposition.
Excluding these graphs as conformal minors,
a matchable bipartite graph must contain a \emph{Hamilton cycle} (a cycle covering all vertices).
Based on the Hamilton cycle, we completely characterize strongly uniform matching covered
bipartite graphs (see Theorem \ref{BMT} in Section \ref{Bip-C}).
Except for the 4 aforementioned bipartite graphs,
we find another 25 non-strongly uniform BN-graphs by graded ear decomposition.
By excluding the above 29 graphs as conformal minors, we obtain our main theorem of this article: a BN-graph is strongly uniform if and only if it contains no
conformal minor isomorphic to the above 29 graphs (see Theorem \ref{odd-pre} in Section 4). To get the main theorem, we determine 4 families of  BN-graphs by excluding the 29 conformal minors and prove that each graph $G$ in the 4 families of BN-graphs satisfies that $gf(G)=Af(G)$, which are described in Lemmas \ref{BNFG} and \ref{G1234E} respectively. We will give their proofs in Sections 5 and 6. 

\section{Preliminaries}

We restrict our consideration to simple, undirected and finite graphs.
For the terminology on matchable graphs, we follow Lov\' asz and Plummer \cite{Lovasz1986}.
 Let $G$ be a graph with vertex set $V(G)$ and edge set $E(G)$.
 An edge subset $M$ of $G$ is a \emph{matching}
 if any pair of edges in $M$ has no common end-vertices.
A \emph{perfect matching} $M$ of $G$ is a matching covering all vertices of $G$, i.e., $V(M)=V(G)$.
A cycle $C$ of $G$ is $M$-alternating if $M$ contains a perfect matching of $C$. So a cycle of $G$ is conformal if and only if $G$ has a perfect matching $M$ such that $C$ is $M$-alternating.
In the following, we use $\mathcal{M}(G)$ to denote the set of all perfect matchings of $G$.

A connected  graph $G$ is
\emph{matching covered} if  each edge  of $G$ belongs to a perfect matching. An edge $e$ of a matchable graph $G$ is called \emph{allowed}, if it belongs to at least one
perfect matching of $G$, and \emph{forbidden}  otherwise. Each component of the subgraph of a matchable graph $G$ formed by all allowed edges is matching covered and  called an {\sl elementary component} of $G$.

\subsection{Anti-forcing and global forcing}

For a perfect matching $M$ of a graph $G$,
an \emph{anti-forcing set} of $M$ means an edge subset $S\subseteq E(G)\setminus M$ such that
$G-S$ has only one perfect matching $M$, and the \emph{anti-forcing number} of $M$ is the
smallest size of an anti-forcing set of $M$, denoted by $af(G,M)$.

\begin{Lem}
   {\rm\cite{Kai2017Anti}}
   \label{anti-forcing}
    Let $M$ be a perfect matching of a graph $G$.
    Then an edge subset $S\subseteq E(G)\setminus M$ is an anti-forcing set of $M$ if and only if
    $S$ contains at least one edge from each $M$-alternating cycle of $G$.
\end{Lem}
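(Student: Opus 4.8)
The plan is to build the proof on the standard structural fact that for any two perfect matchings $M$ and $M'$ of $G$, the symmetric difference $M\triangle M'$ is a vertex-disjoint union of cycles, each of which is $M$-alternating (its edges alternate between $M$ and $M'$). I would prove the two implications separately, each by contradiction, and in both directions the symmetric difference is the only substantive ingredient. The companion fact I rely on is that switching $M$ along an $M$-alternating cycle again yields a perfect matching of $G$.

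For the sufficiency direction, I assume $S$ meets every $M$-alternating cycle and show $G-S$ has $M$ as its unique perfect matching. Since $S\cap M=\emptyset$, the matching $M$ survives in $G-S$. If some other perfect matching $M'$ of $G-S$ existed, then $M\triangle M'$ would be nonempty and would contain an $M$-alternating cycle $C$; by hypothesis $S$ would contain some edge $e$ of $C$. As $e\in S\subseteq E(G)\setminus M$ we have $e\notin M$, so along the alternating cycle $C$ the edge $e$ must lie in $M'$. But $M'$ is a perfect matching of $G-S$, hence $M'\cap S=\emptyset$, contradicting $e\in S$. Therefore $M$ is the unique perfect matching of $G-S$, i.e.\ $S$ is an anti-forcing set.

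For the necessity direction, I assume $S$ is an anti-forcing set and argue contrapositively: if some $M$-alternating cycle $C$ satisfies $E(C)\cap S=\emptyset$, I produce a second perfect matching in $G-S$. Switching $M$ along $C$ yields $M'=M\triangle E(C)$, which is again a perfect matching of $G$ and differs from $M$ because $C$ has edges outside $M$. Every edge of $M'$ lies either in $M\setminus E(C)$, hence outside $S$ since $S\cap M=\emptyset$, or in $E(C)\setminus M$, hence outside $S$ since $E(C)\cap S=\emptyset$; thus $M'$ avoids $S$ and is a perfect matching of $G-S$ distinct from $M$, contradicting the uniqueness guaranteed by $S$ being an anti-forcing set.

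The only nontrivial input is the symmetric-difference decomposition together with the fact that switching along an $M$-alternating cycle produces another perfect matching; both are elementary and standard, so I expect no genuine obstacle. The lemma is essentially a translation between ``uniqueness of $M$ in $G-S$'' and ``$S$ blocks every $M$-alternating cycle,'' with the symmetric difference mediating between the two formulations.
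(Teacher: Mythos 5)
Your proof is correct: both directions are the standard symmetric-difference argument (decompose $M\triangle M'$ into $M$-alternating cycles for sufficiency, switch $M$ along an $S$-avoiding alternating cycle for necessity), and every step is sound. Note that the paper itself gives no proof of this lemma---it is quoted as a known result from the cited reference---so there is nothing to contrast with; your argument is precisely the standard one underlying that citation.
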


A \emph{compatible $M$-alternating set} of a graph $G$
is a set of $M$-alternating cycles $\mathcal C$
satisfying that each pair of cycles in $\mathcal C$ is either disjoint or shares only edges in $M$.
The maximum cardinality of a compatible $M$-alternating set of $G$ is denoted by $c'(G,M)$.

\begin{The}
   \rm{\cite{Lei2016Anti}}
   \label{com-alt}
   For any perfect matching $M$ of a plane bipartite graph $G$, we have $af(G,M)=c'(G,M)$.
\end{The}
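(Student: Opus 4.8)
The plan is to prove the two inequalities $af(G,M)\ge c'(G,M)$ and $af(G,M)\le c'(G,M)$ separately, the first being elementary and the second being where planarity does the real work.

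For the easy direction I would argue straight from Lemma \ref{anti-forcing}. Fix a maximum compatible $M$-alternating set $\mathcal C=\{C_1,\dots,C_k\}$ with $k=c'(G,M)$. By Lemma \ref{anti-forcing} any anti-forcing set $S\subseteq E(G)\setminus M$ must contain an edge of $E(C_i)\setminus M$ for every $i$. Since the cycles in $\mathcal C$ pairwise share only edges of $M$, the sets $E(C_i)\setminus M$ are pairwise disjoint, so $S$ must contain at least $k$ distinct edges. Hence $af(G,M)\ge c'(G,M)$, and this argument uses neither planarity nor bipartiteness.

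The content of the theorem is the reverse inequality, and here I would translate the whole problem into a plane digraph. Using the bipartition $(X,Y)$ of $G$, orient every edge not in $M$ from $X$ to $Y$ and every edge of $M$ from $Y$ to $X$; call the resulting plane digraph $D$. A routine check at each vertex shows that, with a suitable choice of traversal direction, a cycle of $G$ is $M$-alternating exactly when it is a directed cycle of $D$, while an edge lies outside $M$ precisely when its arc runs from $X$ to $Y$. Next I would contract all $M$-arcs (equivalently, identify the two ends of each edge of $M$); since $M$ is a perfect matching this produces a plane digraph $D'$ whose arcs are the images of the non-$M$ edges. One then verifies that the dicycles of $D'$ are in bijection with the $M$-alternating cycles of $G$, two such cycles sharing an arc of $D'$ if and only if they share a non-$M$ edge of $G$. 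Under this correspondence, Lemma \ref{anti-forcing} identifies $af(G,M)$ with the minimum feedback arc set of $D'$, while $c'(G,M)$ becomes the maximum number of pairwise arc-disjoint dicycles of $D'$.

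It then remains to invoke the planar min-max duality between feedback arc sets and arc-disjoint dicycles: in a plane digraph the dicycles correspond, under planar duality, to the directed cuts of the dual digraph, a feedback arc set corresponds to a dijoin, and the Lucchesi--Younger theorem equates the minimum dijoin with the maximum number of arc-disjoint directed cuts. Transporting this equality back through the orientation and the $M$-arc contraction yields $af(G,M)\le c'(G,M)$, which together with the easy direction gives the claimed equality. I expect the main obstacle to be precisely this last step: one must set up the orientation and the contraction so that the combinatorial objects match exactly---in particular that \emph{every} dicycle of $D'$ arises from an $M$-alternating cycle and that compatibility corresponds to arc-disjointness---and then apply the duality correctly, since it is planarity that forces the packing number to meet the covering number, whereas bipartiteness is what makes the orientation send alternating cycles to directed cycles in the first place.
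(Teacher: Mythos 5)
This theorem is quoted in the paper from \cite{Lei2016Anti} without proof, and your argument reproduces essentially the proof given there: the easy inequality $af(G,M)\ge c'(G,M)$ from Lemma \ref{anti-forcing} plus disjointness of the non-$M$ parts of compatible cycles, and the reverse inequality by orienting edges according to the bipartition so that $M$-alternating cycles become dicycles, contracting the $M$-arcs, and invoking the planar min-max equality (Lucchesi--Younger via planar duality) between minimum feedback arc sets and maximum arc-disjoint dicycle packings. Your construction and correspondences (dicycles of the contracted digraph $\leftrightarrow$ $M$-alternating cycles, arc-disjointness $\leftrightarrow$ compatibility, feedback arc sets $\leftrightarrow$ anti-forcing sets) are exactly the ones needed, so the proposal is correct and takes the same route as the cited source.
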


A \emph{global forcing set} of a matchable graph $G$ is an edge subset
$S$ such that for each pair of distinct perfect matchings $M_1$ and $M_2$ of $G$, $S\cap M_1\neq S\cap M_2$. The  \emph{global forcing number} of $G$ is the smallest cardinality
of a global forcing set of $G$, denoted by $gf(G)$.

\begin{Lem}
   {\rm\cite{cai2012global}}
   \label{global-forcing}
   For an edge subset $S$ of a matchable graph $G$, $S$ is a global forcing set of $G$ if and only if $S$ intersects each conformal cycle of $G$.
\end{Lem}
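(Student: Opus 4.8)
The plan is to prove both implications by relating conformal cycles to pairs of perfect matchings through symmetric differences and the standard matching-flip operation. The one structural fact I would rely on throughout is that the symmetric difference $M_1\triangle M_2$ of two perfect matchings of $G$ is a vertex-disjoint union of cycles, each of even length and alternating with respect to both $M_1$ and $M_2$; I would combine this with the equivalence already recorded in the excerpt, namely that a cycle $C$ is conformal precisely when $G$ has a perfect matching $M$ for which $C$ is $M$-alternating.

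For the forward direction (if $S$ is a global forcing set, then $S$ meets every conformal cycle) I would argue by contraposition. Suppose some conformal cycle $C$ satisfies $S\cap E(C)=\emptyset$. Pick a perfect matching $M$ making $C$ an $M$-alternating cycle, and set $M'=M\triangle E(C)$. Since $C$ is even, replacing $M\cap E(C)$ by the complementary perfect matching $E(C)\setminus M$ of $C$ yields a perfect matching $M'$ of $G$ that agrees with $M$ off $E(C)$; moreover $M'\neq M$ because they differ on the nonempty set $E(C)$. As $S$ avoids $E(C)$, we obtain $S\cap M=S\cap M'$, so $S$ fails to distinguish the distinct matchings $M$ and $M'$ and hence is not a global forcing set.

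For the reverse direction (if $S$ meets every conformal cycle, then $S$ is a global forcing set) I would again argue by contradiction. Suppose there are distinct $M_1,M_2\in\mathcal{M}(G)$ with $S\cap M_1=S\cap M_2$. Then $D=M_1\triangle M_2$ is nonempty, and by the structural fact it contains a cycle $C$ that is $M_1$-alternating, hence conformal. By hypothesis there is an edge $e\in S\cap E(C)$; since $E(C)\subseteq M_1\triangle M_2$, this $e$ lies in exactly one of $M_1,M_2$, say $e\in M_1\setminus M_2$. Then $e\in S\cap M_1$ while $e\notin S\cap M_2$, contradicting $S\cap M_1=S\cap M_2$.

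Finally, regarding the main obstacle: once the symmetric-difference decomposition is in hand there is little genuine difficulty, and the argument runs parallel to the proof of Lemma \ref{anti-forcing}, which is essentially the same bookkeeping for a single matching. The only points that require care are verifying that $M\triangle E(C)$ is genuinely a perfect matching distinct from $M$ (which uses that a conformal cycle is even and $M$-alternating) and invoking the stated equivalence between conformal cycles and alternating cycles, so that the cycles appearing as components of $M_1\triangle M_2$ are correctly recognized as conformal. I would therefore present the symmetric-difference fact explicitly before splitting into the two directions.
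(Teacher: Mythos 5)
Your proof is correct, and it is the standard argument for this fact: the paper itself gives no proof of Lemma \ref{global-forcing}, citing it from \cite{cai2012global}, and the proof in that reference is exactly your symmetric-difference argument (flipping a missed conformal cycle to produce two indistinguishable matchings in one direction, and extracting a conformal cycle from $M_1\triangle M_2$ avoiding $S$ in the other). Both directions of your write-up are complete and sound, so there is nothing further to add.
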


We use $c(G)$ to denote the \emph{cyclomatic number} of a connected graph $G$, i.e., $c(G)=|E(G)|-|V(G)|+1$.
Do\v sli\'c \cite{D.T2007Global} gave a trivial upper bound of the global forcing number.

\begin{The}
   {\rm\cite{D.T2007Global}}
   \label{gf(G)<c(G)}
   Let $G$ be a connected matchable graph. Then $gf(G)\leq c(G)$ and the equality holds
   if and only if all cycles of $G$ are conformal.
\end{The}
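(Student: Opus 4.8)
The plan is to reduce everything to Lemma~\ref{global-forcing}, which characterizes a global forcing set as an edge set meeting every conformal cycle of $G$, and to exploit the standard correspondence between the cyclomatic number and the co-tree of a spanning tree. First I would establish the inequality $gf(G)\le c(G)$. Fix a spanning tree $T$ of $G$; its complement $S_0:=E(G)\setminus E(T)$ has exactly $c(G)=|E(G)|-|V(G)|+1$ edges. Every cycle of $G$ contains an edge of $S_0$, since a cycle cannot lie inside the acyclic tree $T$. In particular $S_0$ meets every conformal cycle, so by Lemma~\ref{global-forcing} it is a global forcing set, giving $gf(G)\le|S_0|=c(G)$.

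Next I would handle the ``if'' part of the equality: suppose every cycle of $G$ is conformal. Then, again by Lemma~\ref{global-forcing}, an edge set $S$ is a global forcing set if and only if it meets every cycle of $G$, i.e.\ if and only if $G-S$ is acyclic. For connected $G$ this means $G-S$ is a spanning forest, forcing $|E(G)|-|S|\le|V(G)|-1$, hence $|S|\ge c(G)$. Combined with the upper bound, this yields $gf(G)=c(G)$.

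For the ``only if'' part I would argue by contraposition: assuming some cycle $C_0$ is non-conformal, I would produce a global forcing set of size $c(G)-1$. I would pick an edge $e_0\in C_0$, place the path $C_0-e_0$ inside a spanning tree $T$, and extend it to all of $G$; then $e_0$ is a co-tree edge whose fundamental cycle is exactly $C_0$. Setting $S:=(E(G)\setminus E(T))\setminus\{e_0\}$, of size $c(G)-1$, the key observation is that, relative to $T$, any cycle $C$ is the $\mathrm{GF}(2)$-sum of the fundamental cycles of the co-tree edges it contains, so a cycle whose unique co-tree edge is $e_0$ must equal $C_0$ itself. Since $C_0$ is non-conformal, every conformal cycle therefore contains a co-tree edge other than $e_0$, i.e.\ meets $S$. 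By Lemma~\ref{global-forcing}, $S$ is a global forcing set, whence $gf(G)\le c(G)-1<c(G)$, completing the contrapositive.

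The main obstacle is this final step: one must choose the spanning tree so that the distinguished non-conformal cycle appears as a fundamental cycle, and then invoke the uniqueness of the fundamental-cycle decomposition to guarantee that dropping the single co-tree edge $e_0$ still leaves a transversal of all conformal cycles. The remaining arguments are routine bookkeeping with the cyclomatic number and spanning-forest counts.
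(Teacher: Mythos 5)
Your proof is correct, but there is nothing in the paper to compare it against: the statement is Theorem \ref{gf(G)<c(G)}, which the authors quote from Do\v sli\'c \cite{D.T2007Global} without reproducing a proof. Taken on its own merits, your argument is complete and is the natural one: the co-tree $E(G)\setminus E(T)$ meets every cycle, hence every conformal cycle, so Lemma \ref{global-forcing} gives $gf(G)\le c(G)$; when all cycles are conformal, Lemma \ref{global-forcing} forces $G-S$ to be a spanning forest for any global forcing set $S$, giving $|S|\ge c(G)$; and for the converse you correctly choose the spanning tree to contain the path $C_0-e_0$ (possible since a path is acyclic), so that $C_0$ is the fundamental cycle of $e_0$, and then the $\mathrm{GF}(2)$ fundamental-cycle decomposition shows that any cycle avoiding $S=(E(G)\setminus E(T))\setminus\{e_0\}$ must equal the non-conformal cycle $C_0$, whence $S$ is a global forcing set of size $c(G)-1$. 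The only delicate step — that a cycle whose unique co-tree edge is $e_0$ must be $C_0$ itself — is exactly where the fundamental-cycle basis is needed, and you invoked it correctly; this matches the spanning-tree argument used in the cited source.
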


Recall that a matchable graph $G$ is \emph{strongly uniform} if for each conformal matchable subgraph $H$ of $G$,  $gf(H)=Af(H)$ always holds.

\subsection{Bisubdivision\label{DeBi}}
Let $S$ be an edge subset of a graph $G$.
If $G'$ is obtained from $G$ by replacing each edge $e$ in $S$ with a path $P_e$ of odd length
 and these paths are internally disjoint,
then we call $G'$ a \emph{bisubdivision} of $G$ on $S$, and $P_e$ a \emph{replacement-path} of $e$.

Then we can establish a bijection $g$ between $\mathcal{M}(G)$ and $\mathcal{M}(G')$:
For each $M\in \mathcal{M}(G)$, $g(M)=(M\setminus S)\cup M_S$,
where $M_S$ consists of the $i$-th edges of a path $P_e$ with odd $i$ for all $e\in M\cap S$ and
the $i$-th edges of a path $P_e$ with even $i$ for all $e\in S\setminus M$.
We consider possible changes of $gf(G)$ and $Af(G)$ after some bisubdivisions.
Zhao and Zhang \cite{ZS2019} have already got the following.

\begin{Lem}
   {\rm \cite{ZS2019}}
   \label{ES-A}
   Let $G'$ be a bisubdivision of a matchable graph $G$.
   Then $af(G',g(M))\leq af(G,M)$ for a perfect matching $M$ of $G$.
   Moreover, $Af(G')\leq Af(G)$.
\end{Lem}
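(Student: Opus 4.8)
The plan is to prove the pointwise inequality $af(G',g(M))\le af(G,M)$ first; the ``moreover'' statement then follows immediately, since $g$ is a bijection from $\mathcal M(G)$ onto $\mathcal M(G')$, so every perfect matching of $G'$ has the form $g(M)$ and
$$Af(G')=\max_{M\in\mathcal M(G)}af(G',g(M))\le\max_{M\in\mathcal M(G)}af(G,M)=Af(G).$$
Throughout, fix the edge subset $S$ on which $G'$ is the bisubdivision, and for $e=uv\in S$ write its replacement-path as $P_e=u\,x_1 x_2\cdots x_{2k}\,v$ of odd length $2k+1$, so that its odd-indexed edges are $ux_1,x_2x_3,\dots,x_{2k}v$ and its even-indexed edges are $x_1x_2,\dots,x_{2k-1}x_{2k}$.

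The whole argument runs through Lemma~\ref{anti-forcing}, which recasts an anti-forcing set as an edge set meeting every alternating cycle. So I would let $A$ be a minimum anti-forcing set of $M$ in $G$; thus $A\subseteq E(G)\setminus M$, $|A|=af(G,M)$, and $A$ meets every $M$-alternating cycle of $G$. I lift $A$ to an edge set $A'$ of $G'$ by keeping every $e\in A\setminus S$ unchanged and replacing every $e\in A\cap S$ by the first edge $ux_1$ of its replacement-path $P_e$. Since the paths are internally disjoint and distinct from the edges outside $S$, this lift is injective, so $|A'|=|A|=af(G,M)$. I then check that $A'\subseteq E(G')\setminus g(M)$: every $e\in A$ satisfies $e\notin M$, so for $e\in A\cap S$ the matching $g(M)$ uses the even-indexed edges of $P_e$ and hence avoids the chosen odd-indexed edge $ux_1$, while for $e\in A\setminus S$ we have $e\notin M\setminus S$ and $e$ is not a path-edge, so $e\notin g(M)$.

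It remains to verify that $A'$ meets every $g(M)$-alternating cycle $C'$ of $G'$; by Lemma~\ref{anti-forcing} this makes $A'$ an anti-forcing set of $g(M)$ and yields $af(G',g(M))\le|A'|=af(G,M)$. The device is to \emph{project} $C'$ back to $G$: since every internal vertex of a replacement-path has degree $2$ in $G'$, the cycle $C'$ can enter such a path only by traversing it entirely from $u$ to $v$, so contracting each fully traversed path $P_e$ to its original edge $e$ turns $C'$ into a simple cycle $C$ of $G$ (distinct edges and paths project to distinct edges of $G$, and the non-internal vertices on $C'$ are pairwise distinct). Granting that $C$ is $M$-alternating, the conclusion is quick: $A$ meets $C$ in some edge $f$; if $f\notin S$ then $f\in A'\cap C'$ directly, and if $f\in S$ then $C'$ traverses all of $P_f$, so its first edge $ux_1\in A'$ lies on $C'$ as well. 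Either way $A'\cap C'\neq\varnothing$, as required.

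The main obstacle is precisely the claim that the projection $C$ is $M$-alternating, which is a boundary-bookkeeping verification done locally at each traversed path. When $C'$ traverses $P_e$ and $e\in M$, the matching $g(M)$ takes the odd-indexed edges, so both boundary edges $ux_1$ and $x_{2k}v$ lie in $g(M)$, while the edges of $C'$ incident with $u$ and $v$ from outside $P_e$ are non-matching; after contraction the edge $e$ is then a matching edge flanked by non-matching edges, exactly the alternation pattern needed at both ends of $C$. Symmetrically, when $e\notin M$ the boundary edges of $P_e$ avoid $g(M)$, so $e$ projects to a non-matching edge with matching neighbours. Combining this with the fact that $g(M)$ agrees with $M$ outside $S$, and noting that replacing an odd-length segment by a single edge preserves the parity of the cycle length, shows that the in/out-of-matching pattern is preserved all the way around $C$, so $M$ restricts to a perfect matching of $C$ and $C$ is indeed $M$-alternating. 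This local alternation check is the only delicate point; the remaining steps are routine inclusion and counting verifications.
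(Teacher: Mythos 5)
Your proof is correct, but note that the paper itself offers no argument to compare against: Lemma \ref{ES-A} is imported verbatim from \cite{ZS2019} and stated without proof. What you have written is a valid, self-contained derivation. Its structure --- take a minimum anti-forcing set $A$ of $M$, lift it to $G'$ by replacing each $e\in A\cap S$ with the first edge of its replacement-path, then show the lift meets every $g(M)$-alternating cycle by projecting such a cycle back to $G$ (degree-2 internal vertices force each replacement-path to be traversed entirely or not at all) and checking that alternation survives the contraction --- is exactly the standard argument, and it is the same technique, run in the opposite direction, that the paper uses in its proofs of Lemma \ref{Ev-NM} (projecting an anti-forcing set of $G'$ down to $G$) and Lemma \ref{ES} (transferring global forcing sets via conformal cycles). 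Your treatment of the only delicate point, the local alternation check at the two ends of a traversed path $P_e$ in both cases $e\in M$ and $e\notin M$, is correct, and the reduction of the ``moreover'' claim to the pointwise inequality via the bijection $g$ is exactly right. One detail worth a sentence in a polished write-up: the projected closed walk is a genuine cycle of $G$ (in particular not a digon), which uses simplicity of $G$ --- two distinct $u$--$v$ segments of $C'$ would project to parallel edges of $G$; your parenthetical ``distinct edges and paths project to distinct edges'' implicitly covers this, but making the appeal to simplicity explicit would close the argument airtight.
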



\begin{Lem}
   \label{Ev-NM}
   Let $M$ be a perfect matching of a graph $G$ with $af(G,M)=Af(G)$ and $e\in E(G)\setminus M$.
   If $G'$ is a bisubdivision of $G$ on $e$, then $Af(G')=Af(G)$.
\end{Lem}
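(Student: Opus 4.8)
The plan is to prove the two inequalities $Af(G')\le Af(G)$ and $Af(G')\ge Af(G)$ separately. The first is immediate: it is exactly the ``moreover'' conclusion of Lemma \ref{ES-A}, since $G'$ is a bisubdivision of $G$. So the whole content lies in the reverse inequality $Af(G')\ge Af(G)$, and for this I would exhibit a single perfect matching of $G'$ whose anti-forcing number is already as large as $Af(G)=af(G,M)$. The natural candidate is $M'=g(M)$, the image of $M$ under the canonical bijection $g$; it then suffices to prove $af(G',g(M))\ge af(G,M)$, because $Af(G')\ge af(G',g(M))$ trivially.

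The key structural observation, and the place where the hypothesis $e\notin M$ enters, is a bijection between the $M$-alternating cycles of $G$ and the $g(M)$-alternating cycles of $G'$. Write the replacement-path as $P_e=v_0v_1\cdots v_{2k+1}$ with edges $e_1,\dots,e_{2k+1}$, where $v_0,v_{2k+1}$ are the ends of $e$. Since $e\notin M$, by definition of $g$ the matching $g(M)$ picks the even-indexed edges $e_2,e_4,\dots,e_{2k}$ of $P_e$, so both end edges $e_1$ and $e_{2k+1}$ are non-matching. Because every internal vertex of $P_e$ has degree $2$ in $G'$, any cycle meeting such a vertex must traverse all of $P_e$. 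Hence each $g(M)$-alternating cycle of $G'$ either avoids $P_e$ entirely, in which case it is literally an $M$-alternating cycle of $G$ not using $e$, or it runs through the whole path, in which case contracting $P_e$ back to $e$ yields an $M$-alternating cycle of $G$ using $e$; one checks the alternation matches precisely because $e_1,e_{2k+1}$ are non-matching while the two cycle edges at $v_0,v_{2k+1}$ lie in $M$. Conversely every $M$-alternating cycle lifts the same way, giving the claimed bijection.

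With Lemma \ref{anti-forcing} in hand, which says an anti-forcing set is exactly a non-matching edge set hitting every alternating cycle, I would then transfer a minimum anti-forcing set. Let $S'$ be a minimum anti-forcing set of $g(M)$ in $G'$, so $|S'|=af(G',g(M))$, and define $S$ by keeping every edge of $S'$ that lies in $G-e$ and replacing the whole block $S'\cap E(P_e)$, when nonempty, by the single edge $e$. Then $S\subseteq E(G)\setminus M$ and $|S|\le |S'|$. Using the bijection above, $S$ hits every $M$-alternating cycle of $G$: a cycle avoiding $e$ is hit by a common edge of $S'$, while a cycle through $e$ corresponds to a $g(M)$-alternating cycle through $P_e$ hit by $S'$ either at a common edge kept in $S$, or at a path edge, in which case $e\in S$ hits it. Thus $S$ is an anti-forcing set of $M$, giving $af(G,M)\le |S|\le |S'|=af(G',g(M))$, which is the desired reverse inequality.

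I expect the main obstacle to be making the cycle correspondence airtight, in particular verifying that the alternation is preserved in both directions and that no $g(M)$-alternating cycle of $G'$ escapes the dichotomy ``avoids $P_e$ versus traverses $P_e$''; this is exactly where the degree-$2$ property of the internal vertices and the parity fact $e_1,e_{2k+1}\notin g(M)$ (equivalently $e\notin M$) are essential. If instead $e\in M$, the end edges of $P_e$ would be matching edges, an anti-forcing set could use neither them nor $e$, and the collapsing step would fail; so the hypothesis $e\in E(G)\setminus M$ is genuinely needed.
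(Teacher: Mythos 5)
Your proposal is correct and takes essentially the same route as the paper: one inequality from Lemma \ref{ES-A}, and the reverse by collapsing a minimum anti-forcing set $S'$ of $g(M)$ in $G'$ to an anti-forcing set $S=(S'\setminus E(P_e))\cup\{e\}$ (or $S'\setminus E(P_e)$) of $M$ in $G$, then chaining $Af(G')\geq af(G',g(M))=|S'|\geq|S|\geq af(G,M)=Af(G)$. The only difference is that you spell out the cycle correspondence (avoid $P_e$ versus traverse $P_e$) that the paper leaves implicit when it invokes Lemma \ref{anti-forcing}.
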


\begin{proof}
   By Lemma \ref{ES-A}, $Af(G')\leq Af(G)$.
   For the graph $G'$ and its perfect matching $g(M)$,
   let $S'$ be a minimum anti-forcing set of $g(M)$.
   For the replacement-path $P_e$ of $e$, $S'\cap E(G)=S'\setminus E(P_e)$.
   Let
   $S=(S'\setminus E(P_e))\cup \{e\}$, if the $i$-th edges of $P_e$, for some odd $i$, belongs to $S'$;
   $S=S'\setminus E(P_e)$, otherwise.
   By Lemma \ref{anti-forcing} $S$ is an anti-forcing set of $M$. So $Af(G')\geq af(G',g(M))=|S'|\geq |S|\geq af(G,M)=Af(G)$ and thus $Af(G')=Af(G)$.
\end{proof}
The above discussions imply that a bisubdivision of a graph does not increase the maximum anti-forcing number.
However we will get that bisubdivisions of a graph remain unchanged  on the global forcing number.

\begin{Lem}
   \label{ES}
   For a bisubdivision $G'$ of a matchable graph $G$,  $gf(G')=gf(G)$.
\end{Lem}
\begin{proof}
   Without loss of generality, assume $G'$ is obtained from $G$ by replacing an edge $e$ with a path of odd length $P_e$.
   Let $S$ be a global forcing set of $G$. Let
   $S':=(S\setminus \{e\})\cup \{e'\}$ for the first edge $e'$ of $P_e$, if $e\in S$, and $S':=S$, otherwise.
   For each conformal cycle $C'$ of $G'$,
   we can obtain a conformal cycle $C$ of $G$ by replacing $P_e$ with $e$ (if $P_e\subseteq C'$).
   By Lemma \ref{global-forcing}, $S\cap E(C)\neq \emptyset$, which is equivalent to that $S'\cap E(C')\neq \emptyset$.
   Once by Lemma \ref{global-forcing}, $S'$ is a global forcing set of $G'$. The above discussion is reversible. So $gf(G')=gf(G)$.
\end{proof}
\subsection{\label{QS}Quadrilateral subdivision}
For a graph $G$ and an edge subset $S\subseteq E(G)$,
let $G'$ be the graph obtained from $G$ by replacing each edge $e_i=u_iw_i$ in $S$ with a
$(4k_i+1)$-length path $P_{e_i}'=u_iv_1^iv_2^iv_3^iv_4^i\ldots v_{4k_i}^iw_i$ and
adding $k_i$ new edges $v_1^iv_4^i,v_5^iv_8^i,\ldots v_{4k_i-3}^iv_{4k_i}^i$, where $1\leq i\leq |S|$,
$k_i\geq 0$ and these paths are internally disjoint.
If $G''$ is a bisubdivision of $G'$ on $E(P_{e_i}')$,
then $G''$ is called a \emph{quadrilateral subdivision} of $G$ on $S$ and each $P_{e_i}'$ is called also a {\it replacement-path} of $e_i$.
For example,   quadrilateral subdivisions of $H_{1,4}$ (see Fig. \ref{F10})
on edges $\{v_1v_2,v_3v_4\}$ give a graph $H_{1,4}'$, see Fig. \ref{F12-2}.

\begin{figure}[H]
   \vspace{3pt}
   \centerline{\includegraphics[width=0.17\textwidth]{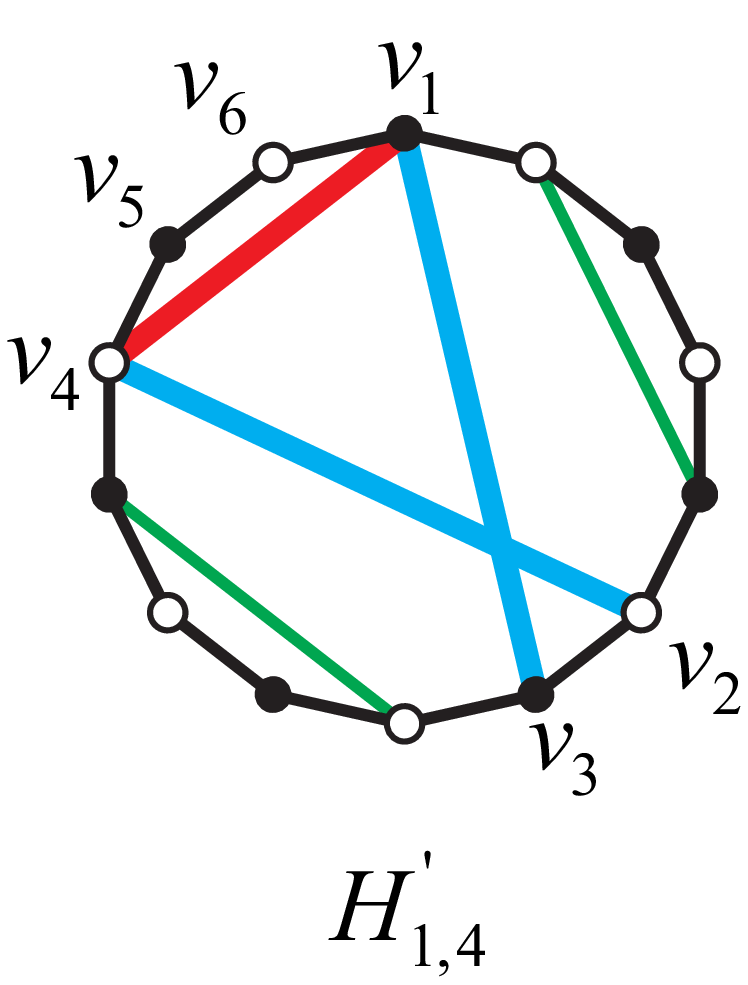}}
\caption{\label{F12-2} Quadrilateral subdivisions of $H_{1,4}$  on edges $\{v_1v_2,v_3v_4\}$ to get graph $H_{1,4}'$.}
\end{figure}
\begin{Lem}
      \label{Ev-Ad} Let $M$ be a perfect matching of a connected graph $G$ with $af(G,M)=Af(G)$. For an edge $e\in E(G)\setminus M$, let $G_e$ be a graph obtained from  $G$ by replacing $e=uv$ with a path
      $P_e=uw_1w_2w_3w_4v$ and adding a new edge $w_1w_4$.
      Then $gf(G_e)=gf(G)+1$ and $Af(G_e)=Af(G)+1$.
   \end{Lem}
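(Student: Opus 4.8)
The plan is to analyze the six-vertex gadget on $\{u,w_1,w_2,w_3,w_4,v\}$ that replaces $e=uv$, classify the perfect matchings and the alternating/conformal cycles of $G_e$ in terms of those of $G$, and then read off both parameters via Lemmas \ref{anti-forcing} and \ref{global-forcing}. The key structural observation is that any perfect matching of $G_e$ must match $w_2,w_3$ inside the gadget, forcing exactly one of three local patterns: Type I uses $\{w_2w_3,w_1w_4\}$, Type III uses $\{w_1w_2,w_3w_4\}$ (both leave $u,v$ matched outside, hence correspond to a perfect matching $N$ of $G$ with $e\notin N$), and Type II uses $\{uw_1,w_2w_3,w_4v\}$ (matching $u,v$ into the gadget, corresponding to an $N$ with $e\in N$). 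Likewise every cycle of $G_e$ is either the new quadrilateral $Q=w_1w_2w_3w_4w_1$, a cycle of $G$ avoiding $e$, or a cycle $C$ of $G$ through $e$ with $e$ replaced by one of the two internally disjoint $u$--$v$ paths $P_1=uw_1w_2w_3w_4v$ or $P_2=uw_1w_4v$; I write $C^{(1)},C^{(2)}$ for these two lifts.

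For the anti-forcing number I would first establish $Af(G_e)\ge Af(G)+1$, which is where the hypothesis $af(G,M)=Af(G)$ is used. Extending $M$ (which omits $e$) to the Type I matching $M'=M\cup\{w_2w_3,w_1w_4\}$, a direct check of the alternation condition shows that the $M'$-alternating cycles of $G_e$ are exactly $Q$, the $M$-alternating cycles of $G$ avoiding $e$, and the chord-lifts $C^{(2)}$ of the $M$-alternating cycles through $e$ (the long-path lift $C^{(1)}$ is \emph{not} $M'$-alternating). Since the only non-$M'$ edges of $Q$ are $w_1w_2,w_3w_4$, and these lie on no other $M'$-alternating cycle, any anti-forcing set of $M'$ must spend one dedicated edge on $Q$; deleting it and sending $uw_1,w_4v\mapsto e$ projects the set to an anti-forcing set of $M$ in $G$ of size one smaller, giving $af(G_e,M')\ge af(G,M)+1=Af(G)+1$ by Lemma \ref{anti-forcing}. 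For the matching upper bound $Af(G_e)\le Af(G)+1$ I would bound $af(G_e,N')$ over all $N'$: Types I and III give $af(G_e,N')\le af(G,N)+1\le Af(G)+1$ (for Type III the single edge $w_2w_3$ meets $Q$ and every lift $C^{(1)}$ at once), while for Type II the quadrilateral is not alternating and the two lifts $C^{(1)},C^{(2)}$ of a through-$e$ cycle share an external non-$N'$ edge, so an optimal anti-forcing set of $N$ already hits everything and $af(G_e,N')\le af(G,N)\le Af(G)$.

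For the global forcing number I would use Lemma \ref{global-forcing} and a conformal-cycle correspondence that needs no hypothesis: the conformal cycles of $G_e$ are precisely $Q$, the conformal cycles of $G$ avoiding $e$, and \emph{both} lifts $C^{(1)},C^{(2)}$ of each conformal cycle $C$ of $G$ through $e$ (both appear, since such a $C$ is alternating either for some matching with $e\notin N$, extending to Types I and III, or for one with $e\in N$, giving Type II). For the upper bound, enlarge an optimal global forcing set $T$ of $G$ by adjoining $w_2w_3$ to kill $Q$ and replacing any occurrence of $e$ by $uw_1$ (which lies on both lifts), producing a global forcing set of $G_e$ of size $gf(G)+1$. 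For the lower bound, project an optimal global forcing set $T'$ of $G_e$ by keeping its common edges and mapping gadget hits of the two lifts to $e$; here I would verify that $T'$ uses at least one edge of $Q$ and that, whenever the projection must add $e$, at least two gadget edges occur in $T'$, so the projection drops at least one edge and $gf(G)\le gf(G_e)-1$.

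The main obstacle I anticipate is the bookkeeping created by the asymmetry between the two internal $u$--$v$ paths: which lift $C^{(1)}$ or $C^{(2)}$ is alternating depends on the matching type, so the ``$+1$'' must be attributed cleanly to the new quadrilateral $Q$ rather than to the path structure. Making the projection inequalities tight, so that exactly one edge is lost passing from $G_e$ to $G$ in both arguments, hinges on the precise fact that no single gadget edge simultaneously lies on $Q$, $P_1$ and $P_2$ (indeed $Q\cap P_1\cap P_2=\emptyset$), which I would isolate as a small incidence computation.
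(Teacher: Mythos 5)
Your proposal is correct, and its core step for the anti-forcing lower bound --- extending $M$ to the Type I matching $M'=M\cup\{w_2w_3,w_1w_4\}$, observing that the new square $Q$ is then an alternating cycle whose non-matching edges $w_1w_2,w_3w_4$ lie on no other $M'$-alternating cycle, and projecting gadget hits $uw_1,w_4v\mapsto e$ --- is exactly the paper's argument (its $M_e$ is your $M'$). You diverge from the paper in the other two bounds. For $Af(G_e)\le Af(G)+1$ the paper does not classify matchings into your three types: for an arbitrary perfect matching $M'$ of $G_e$ it deletes one edge of the square not in $M'$ (namely $w_1w_4$, or $w_1w_2$ when $w_1w_4\in M'$), notes that the resulting graph carries the alternating cycles of a bisubdivision of $G$ on $e$, and invokes Lemma \ref{ES-A}; your explicit lifting of anti-forcing sets in Types I/II/III is self-contained and even yields the sharper observation that Type II matchings satisfy $af(G_e,N')\le Af(G)$, at the cost of more case analysis. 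For $gf(G_e)\ge gf(G)+1$ the paper first proves that the complement of a minimum global forcing set $S_e$ is connected, uses the degree-two vertices $w_2,w_3$ to normalize $S_e$ so that $|S_e\cap E(Q)|=1$, and only then projects; your route skips the connectivity and normalization entirely and charges the lost edge directly via the incidence fact $E(Q)\cap E(P_1)\cap E(P_2)=\emptyset$: when the projection must add $e$, the set must hit $Q$, $C^{(1)}$ and $C^{(2)}$ with gadget edges, and no single gadget edge lies on all three (if one edge hits both $Q$ and $C^{(2)}$ it is $w_1w_4$, which misses $C^{(1)}$), so at least two gadget edges are spent. I checked this counting argument; it is valid and arguably cleaner than the paper's normalization. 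One small caveat: your conformal-cycle correspondence is not literally hypothesis-free, since $Q$ is conformal in $G_e$ only because $G$ has a perfect matching avoiding $e$; this is exactly the hypothesis $e\in E(G)\setminus M$, so within the lemma nothing breaks, but it should be stated as the place where that hypothesis enters the global forcing half.
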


   \begin{proof}
      Since $e\notin M$, $C=w_1w_2w_3w_4w_1$ is a conformal square of $G_e$.
      For each minimum global forcing set $S_e$ of $G_e$,
      $|S_e\cap E(C)|\geq 1$ by Lemma \ref{global-forcing}.
      Note that $w_2$ and $w_3$ have degree two in $G_e$.
      Suppose that $G_e-S_e$ is not connected. Since $G$ is connected, $G_e$ is also connected. Then there is at least one edge $e'\in S_e$ connecting two components of $G_e-S_e$.
      By Lemma \ref{global-forcing}, $G_e-S_e$ contains no conformal cycles of $G_e$. It follows that $G_e-(S_e\setminus \{e'\})$ contains no conformal cycles of $G_e$. Once by Lemma \ref{global-forcing}, $S_e\setminus \{e'\}$ is a global forcing set of $G_e$ with less size than $S_e$, a contradiction.
      Thus, $G_e-S_e$ is connected.
      If $|S_e\cap E(C)|\geq 2$, then
      $S_e$ contains the edge $w_1w_4$ and exactly one edge
      from $P_e$,
      i.e., $|S_e\cap E(C)|=2$ and $S_e\cap \{uw_1,w_4v\}=\emptyset$.
      By Lemma \ref{global-forcing},
      $(S_e\setminus E(C))\cup \{uw_1,w_1w_2\}$ is also a minimum global forcing set of $G_e$.
      Thus, we can assume that $|S_e\cap E(C)|=1$.

      Note that each conformal cycle of $G$ corresponds to a conformal cycle of $G_e$ intersecting $S_e$.
      Let $S=S_e\setminus E(C)$, if $S_e\cap \{uw_1,w_4v\}=\emptyset$;
      $S=((S_e\setminus E(C))\setminus \{uw_1,w_4v\})\cup \{e\}$, otherwise.
      Then we can check that $S$ is a global forcing set of $G$, which implies that
      $gf(G_e)=|S_e|\geq 1+|S|\geq 1+gf(G)$.
      Since each conformal cycle of $G_e$, other than $C$, corresponds to
      a conformal cycle of $G$ and the above discussion is reversible,
      $gf(G_e)\leq gf(G)+1$. So $gf(G_e)=gf(G)+1$.

      Let $M_e=M\cup \{w_1w_4,w_2w_3\}$.
      Then $M_e$ is a perfect matching of $G_e$ and $C$ is an $M_e$-alternating square.
      Let $S'_e$ be a minimum anti-forcing set of $M_e$ in $G_e$.
      Then $S'_e$ contains exactly one edge of $E(C)$.
      Let $S'=S'_e\setminus E(C)$, if $S'_e\cap \{uw_1,w_4v\}=\emptyset$;
      $S'=((S'_e\setminus E(C))\setminus \{uw_1,w_4v\})\cup \{e\}$, otherwise.
      Then $S'$ is an anti-forcing set of $M$ in $G$, which implies that
      $Af(G_e)\geq af(G_e,M_e)=|S'_e|\geq 1+|S'|\geq 1+af(G,M)=1+Af(G)$.

      Let $M'$ be a perfect matching of $G_e$.
      If $w_1w_4\notin M'$, then
      $af(G_e,M')\leq af(G_e-w_1w_4,M')+1\leq Af(G_e-w_1w_4)+1$. By Lemma \ref{ES-A}, $Af(G_e-w_1w_4)\leq Af(G)$ and thus $af(G_e,M')\leq Af(G)+1$.
      If $w_1w_4\in M'$, then $w_2w_3\in M'$ and $C$ is an $M'$-alternating square.
      Like the above discussion, $af(G_e,M')=af(G_e-w_1w_2,M')+1\leq Af(G)+1$.
      So $Af(G_e)=1+Af(G)$ and we are done.
   \end{proof}

   \begin{Rem}
      \label{RMM}
      For the perfect matching $M_e$ of $G_e$ as in Lemma \ref{Ev-Ad},
      $Af(G_e)=af(G_e,M_e)$.
   \end{Rem}

   Combining Lemmas \ref{Ev-NM}, \ref{ES} and \ref{Ev-Ad}, we can directly get the following result.

   \begin{Cor}
      \label{SS} Let $M$ be a perfect matching of a graph $G$ with $Af(G)=af(G,M)$, and $G_0$ be a quadrilateral subdivision of $G$ on $E(G)\setminus M$. Then $gf(G_0)=Af(G_0)$ if and only if $gf(G)=Af(G)$.
   \end{Cor}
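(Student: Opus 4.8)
The plan is to show that a quadrilateral subdivision shifts $gf$ and $Af$ by exactly the same amount, namely by the total number $K:=\sum_{i}k_i$ of inserted chords (equivalently, squares). Granting the two identities $gf(G_0)=gf(G)+K$ and $Af(G_0)=Af(G)+K$, we obtain $gf(G_0)-Af(G_0)=gf(G)-Af(G)$, so the right‑hand side is $0$ iff the left‑hand side is $0$, which is precisely the asserted equivalence $gf(G_0)=Af(G_0)\iff gf(G)=Af(G)$. I would build $G_0$ from $G$ along the two‑stage factorization that is built into the definition: first pass from $G$ to the graph $G'$ obtained by inserting the $K$ squares (adding the chords $v^i_{4j-3}v^i_{4j}$), and then pass from $G'$ to $G_0$ by the bisubdivision on $\bigcup_i E(P'_{e_i})$.

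\textbf{Inserting the squares.} I would insert the $K$ squares one at a time, always applying Lemma \ref{Ev-Ad} to an edge lying \emph{outside} the current maximum anti‑forcing matching. Starting from $M$ with $af(G,M)=Af(G)$, each insertion raises both $gf$ and $Af$ by $1$, and by Remark \ref{RMM} it produces a perfect matching that again attains the maximum anti‑forcing number and that contains the new chord together with the new middle edge while leaving the two connector edges unmatched. Thus every later insertion can again be done on an unmatched edge, and after all $K$ steps we reach $G'$ with a distinguished matching $M'$ satisfying $af(G',M')=Af(G')=Af(G)+K$ and $gf(G')=gf(G)+K$. Concretely $M'$ is $M$ together with all chords $v^i_{4j-3}v^i_{4j}$ and all middle edges $v^i_{4j-2}v^i_{4j-1}$, so that every remaining edge of each path $P'_{e_i}$ (the connectors and the two outer edges of each square) is unmatched by $M'$, while the chords are not path edges and so are never bisubdivided.

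\textbf{Bisubdividing.} For the global forcing number there is nothing to arrange: Lemma \ref{ES} gives $gf(G_0)=gf(G')=gf(G)+K$ at once. For the maximum anti‑forcing number, Lemma \ref{ES-A} already yields $Af(G_0)\le Af(G')$, so only $Af(G_0)\ge Af(G')$ remains. I would subdivide the edges of $\bigcup_i E(P'_{e_i})$ one at a time, maintaining a maximum anti‑forcing matching throughout. For every edge unmatched by the current matching, Lemma \ref{Ev-NM} applies directly: it keeps $Af$ fixed and, as its proof shows, the image matching again attains the maximum, so the step can be iterated. This disposes of all connector and outer‑square edges.

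\textbf{The main obstacle.} The delicate edges are the middle edges $v^i_{4j-2}v^i_{4j-1}$, which lie in $M'$, so Lemma \ref{Ev-NM} does not apply; and one cannot simply flip the square to its other matching, since turning a matched middle edge into an unmatched one can produce a single edge that kills several alternating cycles at once and thereby strictly lowers $Af$. I would resolve this using the special geometry of a square: the two endpoints of a middle edge have degree $2$, so in the current graph the middle edge lies on a \emph{unique} alternating cycle with respect to the current maximum matching, namely its own square closed through the chord. Bisubdividing a middle edge therefore replaces that one square by a longer even cycle and leaves every other alternating cycle untouched; by Lemma \ref{anti-forcing} the new cycle still absorbs exactly one edge of any anti‑forcing set, and the freshly created unmatched edges lie only on this single cycle, so they cannot shrink a minimum anti‑forcing set. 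Hence the image of the current maximum anti‑forcing matching keeps its anti‑forcing number, $Af$ is preserved, and the matching stays maximal for the next step. Iterating over all middle edges gives $Af(G_0)=Af(G')=Af(G)+K$, and the two identities finish the proof. I expect this middle‑edge step to be the crux, since it is the only place where the hypothesis $af(G,M)=Af(G)$ and the degree‑$2$ structure of the inserted squares—rather than the three quoted lemmas alone—are really needed.
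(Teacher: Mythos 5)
Your proof is correct, and its skeleton is exactly the paper's own: the paper proves Corollary \ref{SS} with the single sentence ``Combining Lemmas \ref{Ev-NM}, \ref{ES} and \ref{Ev-Ad}, we can directly get the following result,'' which implicitly is your two-stage factorization --- insert the $K$ squares one at a time on unmatched edges via Lemma \ref{Ev-Ad} together with Remark \ref{RMM}, then handle the bisubdivision stage via Lemmas \ref{ES} and \ref{Ev-NM}. Where you go beyond the paper is precisely the matched middle edges $v^i_{4j-2}v^i_{4j-1}$: you are right that Lemma \ref{Ev-NM} does not cover them, and your argument for this case is sound, with one small completion needed --- uniqueness of the alternating cycle through a middle edge uses not only that its ends have degree two (which forces any cycle through it to contain the whole path $v^i_{4j-3}v^i_{4j-2}v^i_{4j-1}v^i_{4j}$) but also that the chord is the only matching edge at $v^i_{4j-3}$ and at $v^i_{4j}$, which is what forces the cycle to close through the chord. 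Granting that, a minimum anti-forcing set of the image matching pulls back without increase by trading any of its edges on the lengthened cycle for the single outer edge $v^i_{4j-3}v^i_{4j-2}$, and Lemma \ref{ES-A} closes the sandwich, exactly as you say. It is worth noting that this case can also be finessed away so that the paper's three lemmas literally suffice: since only the isomorphism type of $G_0$ matters, bisubdividing a matched middle edge into a path of length $2\ell+1$ yields a graph isomorphic to the one obtained by instead lengthening the adjacent unmatched outer edge of the same square by the same amount (both produce an even cycle of the same length attached to the rest of the graph at the two chord ends); under that re-parameterization every edge that is ever bisubdivided is unmatched, and Lemma \ref{Ev-NM} applies throughout. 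Either way, your two identities $gf(G_0)=gf(G)+K$ and $Af(G_0)=Af(G)+K$ hold and the equivalence follows.
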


\subsection{Ear Decomposition}

Each bipartite graph is given a black-white coloring on its vertices so that the end vertices of each edge receive different colors. An ear refers to a path of odd length.
A \emph{bipartite ear decomposition} of a bipartite graph $G$ can be expressed as below.
Let $e$ be an edge of $G$. Then join its end vertices by a path $P_1$ of odd length
(called ``first ear'').
Proceed inductively to build a sequence of bipartite graphs as follows:
if $G_{k-1}=e+P_1+P_2+\ldots +P_{k-1}$ has been constructed,
then join any two vertices in different
colors of $G_{k-1}$ by the $k$-th ear $P_k$ (of odd length) such that
$P_k$ has only its end vertices in
$G_{k-1}$. The decomposition $G=G_k=e+P_1+P_2+\ldots +P_k$
is called a bipartite ear decomposition
of $G$. For $1\leq i\leq k$, each $G_i$ is a matching covered conformal
subgraph of $G$.

\begin{The}
\cite{Lovasz1977}
   \label{ED}
   A bipartite graph is matching covered if and only
   if it has a bipartite ear decomposition.
\end{The}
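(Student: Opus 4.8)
The plan is to prove both implications by induction, the easy direction building perfect matchings up along each new ear and the hard direction extracting a single \emph{odd} addable ear at each step. Throughout I would use two standard facts of matching theory. First, the symmetric difference $M\triangle N$ of two perfect matchings is a vertex‑disjoint union of conformal even cycles. Second, the following lemma, which I would record at the outset: if $H$ is a matching covered bipartite graph with colour classes $X,Y$ and $x\in X$, $y\in Y$, then $H-x-y$ has a perfect matching. To see this, take a perfect matching $M$ of $H$; if $xy\in M$ delete it, and otherwise delete the two edges $xM(x)$ and $yM(y)$, leaving a near‑perfect matching that exposes exactly $M(x)\in Y$ and $M(y)\in X$. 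Since $H$ is matching covered one finds an $M$‑alternating augmenting path between these two exposed vertices, and switching along it yields the desired perfect matching. I expect the existence of that augmenting path to be the one genuinely delicate routine point; it can be settled by a Hungarian‑forest/Hall argument, or more slickly by passing to the digraph obtained from contracting the edges of $M$, for which ``matching covered'' translates exactly into ``strongly connected''.

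For sufficiency ($\Leftarrow$) I would induct on the number of ears. The base graph $G_1=e+P_1$ is an even cycle and hence matching covered. Assuming $G_{k-1}$ is matching covered, write the new ear as $P_k=u_0u_1\cdots u_{2t+1}$ with $u_0,u_{2t+1}\in V(G_{k-1})$ of different colours, so its interior has the even number $2t$ of vertices. Then $P_k$ admits two complementary ``along‑the‑path'' matchings: matching (a) pairs $u_1u_2,u_3u_4,\dots$ and leaves $u_0,u_{2t+1}$ free, while matching (b) takes $u_0u_1,u_2u_3,\dots,u_{2t}u_{2t+1}$ and covers $u_0,u_{2t+1}$. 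Every edge of $P_k$ lies in (a) or in (b). Combining (a) with any perfect matching of $G_{k-1}$ covers all edges of $G_{k-1}$ together with the (a)‑edges of $P_k$; combining (b) with a perfect matching of $G_{k-1}-u_0-u_{2t+1}$, which exists by the lemma above applied to the matching covered graph $G_{k-1}$, covers the remaining edges of $P_k$. Hence $G_k$ is matching covered.

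For necessity ($\Rightarrow$) I would build the decomposition up, maintaining a matching covered conformal subgraph $H\subsetneq G$ together with a perfect matching $M$ of $G$ that restricts to a perfect matching of $H$ (available since $H$ is conformal and matching covered), and starting from $H_0=\{e\}$ for a single edge $e\in M$. Given $H\subsetneq G$, choose by connectivity an edge $e^\ast\in E(G)\setminus E(H)$ incident with $V(H)$; because $M$ has no edge crossing the boundary of $V(H)$, one checks $e^\ast\notin M$, so $e^\ast$ lies on a conformal cycle $C^\ast$ of $M\triangle N$ for a perfect matching $N\ni e^\ast$. Walking along $C^\ast$ from an endpoint $a\in V(H)$ through $e^\ast$ until the first return $z$ to $V(H)$ yields a path $P$ with interior disjoint from $V(H)$; since this walk both leaves and re‑enters $V(H)$ along non‑$M$ edges and alternates $M$/non‑$M$ in between, $P$ has an odd number of edges, so $a,z$ differ in colour and $P$ is a legitimate ear. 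Its interior is matched internally by $M$, so $H+P$ stays conformal, and $H+P$ is matching covered by exactly the argument of the sufficiency step, the edges incident with $a,z$ being handled by a perfect matching of $H-a-z$ from the lemma. Iterating, and using length‑$1$ ears once $V(H)=V(G)$, exhausts $G$.

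The main obstacle is the necessity step, and specifically the \emph{parity} bookkeeping: the real content is that the ear extracted from a conformal cycle is automatically odd, which is forced by the fact that $M$ never crosses the boundary of $V(H)$, so both boundary edges of $P$ are non‑matching. The lemma on deleting an opposite‑coloured pair is the other load‑bearing ingredient and is used in both directions. If I wished to avoid its ad hoc augmenting‑path proof, I would instead develop the bijection between matching covered bipartite graphs and strongly connected digraphs and quote the directed ear‑decomposition theorem, which delivers the whole statement at once, at the price of setting up that correspondence.
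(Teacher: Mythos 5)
The paper does not prove Theorem \ref{ED} at all: it is quoted as a known result of Lov\'asz and Plummer \cite{Lovasz1977} and used as a black box, so there is no in-paper argument to measure you against. What you have written is, in essence, the classical proof, and its skeleton is sound. Your sufficiency step (the two complementary matchings of an odd ear, one covering the ear's interior and one covering the whole ear, combined respectively with a perfect matching of $G_{k-1}$ and of $G_{k-1}-u_0-u_{2t+1}$) is exactly the standard argument. Your necessity step also isolates the right mechanism: pick $e^\ast\notin M$ incident with the current conformal subgraph $H$, take the cycle $C^\ast$ of $M\triangle N$ through $e^\ast$, and cut off the segment outside $H$; since $M$ restricted to $H$ is a perfect matching of $H$, no $M$-edge crosses the boundary of $V(H)$, so both boundary edges of the segment are non-matching edges and the ear is automatically odd. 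That parity observation is indeed the heart of the matter, and it also keeps $H+P$ conformal with the same $M$, so the iteration is coherent.

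Two points you leave implicit should be nailed down before this counts as a complete proof. First, in your preliminary lemma the augmenting path between the exposed vertices $M(x)$ and $M(y)$ must avoid $x$ and $y$, since you need a perfect matching of $H-x-y$; the bare assertion that a matching covered graph ``has an augmenting path'' does not give this. Your digraph route does work: strong connectivity of the digraph obtained by contracting the edges of $M$ yields a directed path from the vertex corresponding to $yM(y)$ to the vertex corresponding to $xM(x)$ whose internal vertices are \emph{other} matching edges, and this unfolds into an alternating path in $H$ internally disjoint from $\{x,y\}$; but that translation, together with the fact that a weakly connected digraph in which every arc lies on a directed cycle is strongly connected, needs to be written out rather than gestured at. Second, in the necessity step you must check that the first return $z$ of the walk to $V(H)$ satisfies $z\neq a$; otherwise your ``ear'' is a closed walk, not a path. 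This follows from the same boundary fact you already invoke: the $M$-edge at $a$ lies in $E(H)$ and is one of the two cycle edges of $C^\ast$ at $a$, so $C^\ast$ meets $V(H)$ in at least two vertices and the walk re-enters $V(H)$ strictly before closing up at $a$. With these two details supplied, the argument is correct.
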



A \emph{graded ear decomposition} of a matching covered graph $G$ is a sequence of matching covered subgraphs $K_2=G_0\subset G_1 \subset \ldots \subset G_m=G$
satisfying that for each $i\geq 1$,  $G_i$ is obtained from $G_{i-1}$ by attaching a set of disjoint ears.

\begin{The}
\cite{Lovasz1983}
   \label{GED}
   A matching covered graph $G$ has a graded ear decomposition starting with a matching covered conformal
   subgraph $G_0$:  $G_0\subset G_1 \subset \ldots \subset G_m=G$, so that $G_i =G_{i-1}+R_i$ is matching covered and
   $R_i$ contains at most two ears for each $1\leq i\leq m$.
\end{The}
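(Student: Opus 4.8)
The plan is to construct the decomposition from the bottom up, driving the argument by an \emph{extension lemma}: starting from a single edge, I repeatedly enlarge the current subgraph by one or two ears while keeping it matching covered and conformal, until it equals $G$. The starting point $G_0=K_2$ is legitimate because $G$ is matching covered, so any fixed edge of $G$ lies in a perfect matching and hence spans a matching covered conformal subgraph. Everything then reduces to the following claim: if $H\subsetneq G$ is a conformal matching covered proper subgraph, then $G$ contains one or two ears $R$ with end vertices in $V(H)$ and interiors in $V(G)\setminus V(H)$ such that $H+R$ is again conformal and matching covered. Iterating this claim and recording the added ears as $R_1,R_2,\ldots,R_m$ yields the required graded ear decomposition.

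To produce a first ear I would use alternating-path surgery. Since $H$ is conformal, $G-V(H)$ has a perfect matching $N$; together with a perfect matching $M_H$ of $H$ this gives a perfect matching $M=M_H\cup N$ of $G$. As $H\neq G$ and $G$ is connected, there is an edge $f\in E(G)\setminus E(H)$ with an end vertex in $V(H)$, and since $G$ is matching covered $f$ lies in some perfect matching $M_f$. The symmetric difference $M\triangle M_f$ is a disjoint union of alternating cycles, one of which, say $C$, contains $f$ and therefore meets $V(H)$. A maximal subpath $P$ of $C$ whose interior avoids $V(H)$ is an ear: its two terminal edges are $M_f$-edges, because the $M$-edge incident with each vertex of $V(H)$ lies inside $H$, so $C$ reads $M_f,M,M_f,\ldots,M_f$ along $P$ and $P$ has odd length. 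Moreover the $M$-edges along $P$ pair up the interior vertices of $P$ among themselves, so $H+P$ stays conformal.

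If $H+P$ is matching covered we are done with a single ear. The remaining case is the heart of the matter, and it occurs only outside the bipartite world: by Theorem \ref{ED} a bipartite matching covered graph always extends by a single ear, whereas a non-bipartite one need not. The smallest witness is $K_4=C_4+\{e,f\}$, where adjoining either chord alone creates a forbidden edge (its deletion leaves a vertex that cannot be matched), while adjoining both chords simultaneously restores matching coverability. In such a situation I would extract a \emph{second} ear $P'$ — for instance from a further excursion of the same alternating cycle $C$ in and out of $V(H)$, or from a companion symmetric difference — and show that $H+P+P'$ is matching covered and conformal.

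The delicate point, and the step I expect to be the main obstacle, is proving that \textbf{two} ears always suffice: that whenever no single ear can be added one can always repair matching coverability with exactly one more, never needing three or more. This is precisely the content of Lov\'asz's theorem, and the clean way to organize it is the dual, top-down formulation via removable configurations: every matching covered graph other than $K_2$ admits a \emph{removable ear} or a \emph{removable double ear} whose deletion yields a smaller matching covered graph, which is automatically conformal because the deleted interiors form even paths that are themselves matchable. Establishing this dichotomy, and in particular bounding the repair by a single extra ear, relies on the barrier and tight-cut structure of matching covered graphs; once it is in hand, reversing the deletions produces the decomposition $G_0\subset G_1\subset\cdots\subset G_m=G$ with each $R_i$ consisting of at most two ears, as claimed.
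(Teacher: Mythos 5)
First, a point of orientation: the paper does not prove Theorem~\ref{GED} at all — it is quoted verbatim from Lov\'asz \cite{Lovasz1983} as a known tool — so there is no internal proof to compare against, and your proposal must stand or fall as a self-contained argument. Its first half does stand: starting from $K_2$ (or any conformal matching covered $H$), taking $M=M_H\cup N$ and a perfect matching $M_f$ through an edge $f$ leaving $H$, the maximal subpaths of the alternating cycle of $M\triangle M_f$ whose interiors avoid $V(H)$ are indeed odd paths (both terminal edges are $M_f$-edges, since the $M$-edge at any vertex of $V(H)$ lies in $E(H)$), and their interiors are matched by $N$-edges lying on the path, so $H+P$ remains conformal. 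That extraction argument is correct and cleanly written.

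The genuine gap is exactly the step you flag yourself and then do not carry out: when $H+P$ is not matching covered, you never construct a second ear $P'$ and prove that $H+P+P'$ is matching covered (and conformal), nor do you prove that one extra ear always suffices. The two devices you gesture at — ``a further excursion of the same alternating cycle'' or ``a companion symmetric difference'' — are not developed into an argument; they happen to work for $K_4$, but nothing in your text shows the first yields a \emph{single} repairing ear in general, or rules out needing three or more. Your fallback, the dichotomy that every matching covered graph other than $K_2$ has a removable ear or removable double ear, is not a lemma you establish: it is the dual restatement of the very theorem being proved, and invoking ``barrier and tight-cut structure'' names the machinery of \cite{Lovasz1983} without supplying any of it. Since you concede this dichotomy ``is precisely the content of Lov\'asz's theorem,'' the argument is circular at its crux: the routine half (one conformality-preserving ear always exists) is done, the hard half (at most two ears suffice to also preserve matching coverability) is assumed. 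Note also that the theorem as stated starts the decomposition from a prescribed conformal matching covered $G_0$; your bottom-up extension lemma would deliver this if proven, but the top-down removable-ear formulation needs additional care to ensure the deletions never touch $G_0$, a point the proposal does not address.
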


The graded ear decomposition in Theorem \ref{GED} is called  \emph{nonrefinable}
if for any $R_i$  consisting of two ears $R_i'$ and $R_i''$, neither $G_{i-1}+R_i'$ nor $G_{i-1}+R_i''$ is matching covered.

\section{\label{Bip-C}Strongly uniform bipartite graphs}


In this section, we will characterize strongly uniform bipartite graphs by eliminating some special subgraphs.
First, we find four graphs $A_1,A_2,A_3$ and $A_4$ (see Fig. \ref{F01}) such that
$gf(A_i)=Af(A_i)+1$ (see Table \ref{AF}).   Let $\mathcal{A}:=\{A_i:1\leq i\leq 4\}$. We will see that a matchable bipartite graph is strongly uniform if and only if it has no conformal minor in $\mathcal{A}$. Meantime we can determine all types of such graphs. The key point is that we can show that if a matching covered bipartite  graph with at least four vertices has no  conformal minor from $\mathcal{A}$, then it possesses a Hamilton cycle. So  we need to consider possible positions of  all chords of the cycle.

\begin{figure}[H]
      \vspace{3pt}
      \centerline{\includegraphics[width=0.6\textwidth]{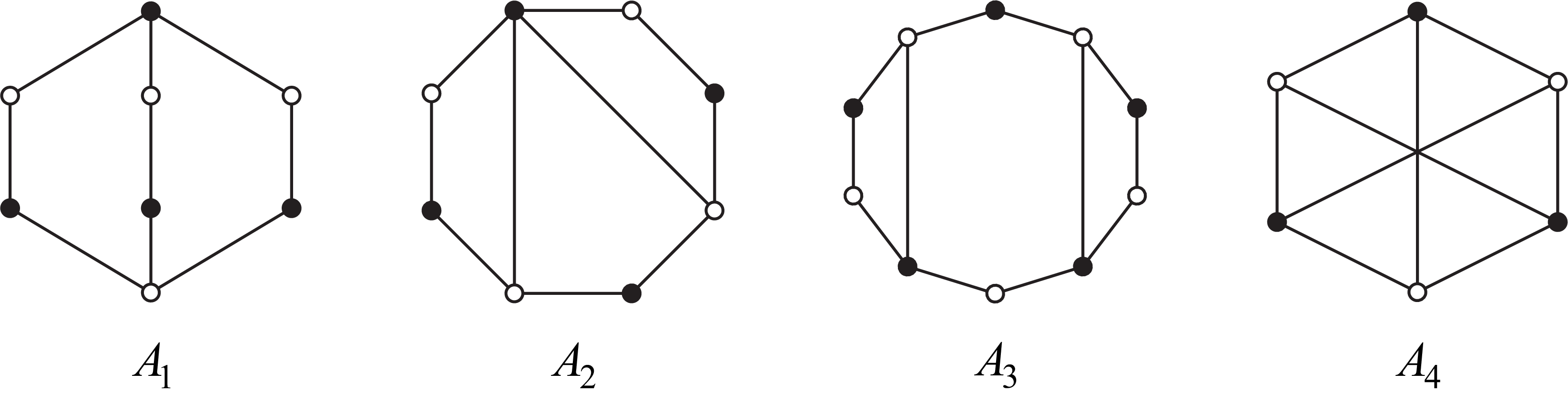}}
\caption{\label{F01}All graphs in $\mathcal{A}$.}
\end{figure}
\begin{table}[H]
   \centering
   \caption{\label{AF}The values of $gf(A_i)$ and $Af(A_i)$ for all graphs $A_i$ in $\mathcal{A}$.}
   \begin{tabular}{|c|c|c|c|}
      \hline
      & $A_1$ & $A_2,A_3$ & $A_4$ \\ \hline
      $gf(A)$ & 2 & 3 & 4 \\ \hline
      $Af(A)$ & 1 & 2 & 3 \\ \hline
   \end{tabular}
\end{table}

Given an even cycle $C$ with black-white coloring. A chord of $C$ is an edge not in $C$ which has end vertices on $C$. Let $x_1x_2$ and $y_1y_2$ be two disjoint chords of $C$ such that  $x_1$ and $y_1$ have the same color.
Then they are called \emph{parallel} if their end vertices are ordered as  $x_1,x_2,y_1,y_2$
along  $C$ (see $B_3$ in Fig. \ref{F02}), and \emph{crossed} if  their end vertices are ordered as  $x_1$, $y_1$, $x_2$, $y_2$
along  $C$ (see Fig. \ref{A1C2}).
Specially, two crossed chords $x_1x_2$ and $y_1y_2$ are said to be \emph{strongly crossed},
if $\{x_1y_2,x_2y_1\}\subseteq E(C)$ (see $B_2$ in Fig. \ref{F02}).

Let $\mathcal{B}_0$ be the set of all even cycles,
$\mathcal{B}_1$ be the set of bipartite graphs consisting of a Hamilton cycle and exactly one chord;
$\mathcal{B}_2$ be the set of graphs consisting of a Hamilton cycle and
exactly two strongly crossed chords
and $\mathcal{B}_3$ be the set of bipartite graphs consisting of a Hamilton cycle with pairwise parallel chords (see  Fig. \ref{F02}). We now have the following main result of this section.

\begin{figure}[H]
   \centerline{\includegraphics[width=0.6\textwidth]{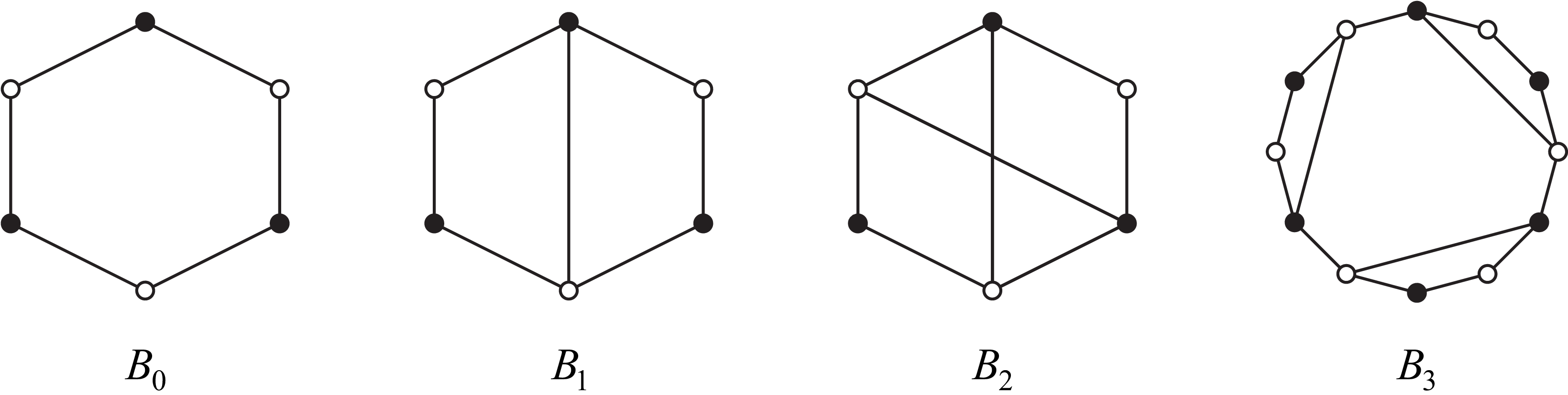}}
   \caption{\label{F02}The forms of the graphs in
   $\mathcal{B}_0\cup \mathcal{B}_1\cup \mathcal{B}_2\cup \mathcal{B}_3$.}
\end{figure}

   \begin{The}
      \label{BMT}
      Let $G$ be a matching covered bipartite graph with at least four vertices.
      Then the following three statements are equivalent.
      \begin{enumerate}[i)]
         \item $G$ is strongly uniform;
         \item $G$ has no conformal minor in $\mathcal{A}$;
         \item $G\in \cup_{i=0}^3 \mathcal B_i$.
      \end{enumerate}
   \end{The}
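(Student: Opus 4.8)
The plan is to prove the three-way equivalence by a cyclic chain of implications i) $\Rightarrow$ ii) $\Rightarrow$ iii) $\Rightarrow$ i), reserving the structural step ii) $\Rightarrow$ iii) for the bulk of the work. The implication i) $\Rightarrow$ ii) is immediate from the bisubdivision machinery, and I would argue its contrapositive. Suppose $G$ has a conformal minor $A_i\in\mathcal A$, so that some bisubdivision $H$ of $A_i$ is a conformal subgraph of $G$. Since $A_i$ is matchable and bisubdivision preserves matchability through the bijection $g$, the graph $H$ is a conformal matchable subgraph of $G$. Now Lemma \ref{ES} gives $gf(H)=gf(A_i)$ and Lemma \ref{ES-A} gives $Af(H)\le Af(A_i)$; combined with $gf(A_i)=Af(A_i)+1$ read off from Table \ref{AF}, this yields $gf(H)=Af(A_i)+1\ge Af(H)+1>Af(H)$. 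Hence $H$ witnesses that $G$ is not strongly uniform.

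For ii) $\Rightarrow$ iii), the first and harder half is to show that $G$ is Hamiltonian. I would prove the contrapositive ``non-Hamiltonian $\Rightarrow$ contains an $\mathcal A$-conformal-minor'' by working along a bipartite ear decomposition $G=e+P_1+\dots+P_k$ (Theorem \ref{ED}). Each $G_j=e+P_1+\dots+P_j$ is a matching covered conformal subgraph, so any copy of an $A_i$ located inside some $G_j$ is automatically a conformal minor of $G$ by transitivity of conformality. Tracking a spanning cycle of $G_j$ as far as possible, I would show that the first ear whose addition cannot be absorbed by rerouting the spanning cycle through its internal vertices produces a theta-type conformal subgraph that is a bisubdivision of some $A_i$ (the smallest such obstruction being of the shape of two internally disjoint odd paths together with a chord). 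I expect this to be the main obstacle of the whole theorem: the ear-by-ear behaviour of Hamiltonicity is not monotone (a later ear may cover vertices skipped earlier), and the genuinely delicate point is verifying conformality of the located subgraph, i.e. that deleting its vertex set leaves a matchable graph.

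Once a Hamilton cycle $C$ is in hand, I would classify the chords. In a bipartite graph every chord joins two differently coloured vertices and hence splits $C$ into two odd arcs, so each chord lies on a conformal even cycle. I would then run a case analysis on the mutual positions of the chords, showing that every configuration other than those defining $\mathcal B_0,\dots,\mathcal B_3$ forces an $\mathcal A$-conformal-minor: two disjoint chords that cross but are not strongly crossed, two chords sharing an end, a third chord adjoined to a strongly crossed pair, and a chord crossing a parallel family each yield a bisubdivision of some $A_i$ as a conformal subgraph. Excluding all of these leaves precisely no chord ($\mathcal B_0$), one chord ($\mathcal B_1$), two strongly crossed chords ($\mathcal B_2$), or pairwise parallel chords ($\mathcal B_3$), so $G\in\bigcup_{i=0}^3\mathcal B_i$.

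Finally, for iii) $\Rightarrow$ i) I would first check directly that each member of $\bigcup_{i=0}^3\mathcal B_i$ contains no $\mathcal A$-conformal-minor, their chord patterns being too restricted to host one; this records iii) $\Rightarrow$ ii). Then, using that both ``conformal subgraph'' and ``conformal minor'' are transitive relations, for any conformal matchable subgraph $H$ of $G$ no $\mathcal A$-conformal-minor can survive in $H$, so every elementary (matching covered) component of $H$ with at least four vertices lies in $\bigcup_{i=0}^3\mathcal B_i$ by the already established ii) $\Rightarrow$ iii). Since conformal cycles live inside single elementary components, both $gf$ and $Af$ are additive over these components (components with fewer than four vertices contribute $0$, and forbidden edges are irrelevant), so it remains to verify $gf=Af$ on the four families themselves. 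For an even cycle there is a unique conformal cycle, giving $gf=Af=1$; for a Hamilton cycle with $k$ pairwise parallel chords all cycles are conformal, so Theorem \ref{gf(G)<c(G)} gives $gf=c(G)=k+1$, while a compatible $M$-alternating set of size $k+1$ produces $Af=k+1$ via Theorem \ref{com-alt} and Lemma \ref{anti-forcing}; the one-chord and strongly crossed cases are handled by the same direct computation together with Lemma \ref{global-forcing}. Combining these, $gf(H)=Af(H)$ for every conformal matchable subgraph $H$, so $G$ is strongly uniform, closing the cycle.
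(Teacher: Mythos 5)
Your proposal is correct and follows essentially the same route as the paper's proof: the identical contrapositive for i) $\Rightarrow$ ii), Hamiltonicity obtained along a bipartite ear decomposition with the theta-graph/$A_1$ obstruction, the same case analysis of chord positions on the Hamilton cycle, and the same reduction of iii) $\Rightarrow$ i) to elementary components followed by compatible $M$-alternating sets. The only cosmetic differences are that the paper runs the Hamiltonicity step as a direct induction rather than a contrapositive (and the conformality you flag as delicate is automatic, since $C+P_k$ spans $G_k$, which is itself a conformal subgraph of $G$), and it closes the $\mathcal{B}_1,\mathcal{B}_3$ computation via the sandwich $c\geq gf\geq Af\geq c$ and the $\mathcal{B}_2$ case via a nonconformal cycle and Theorem \ref{gf(G)<c(G)}, rather than by your appeal to the equality case of that theorem.
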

   \begin{proof}
      i) $\Rightarrow$ ii).
      Suppose to the contrary that $G$ contains a conformal minor in $\mathcal{A}$, say $A_i$.
      Then $G$ contains a conformal subgraph  $H$ which is a bisubdivision of $A_i$. By Lemmas \ref{ES-A} and \ref{ES} and Table \ref{AF}
      we have $gf(H)=gf(A_i)>Af(A_i)\geq Af(H)$, which implies that
       $G$ is not strongly uniform, a contradiction.

       ii) $\Rightarrow$ iii).
      By Theorem \ref{ED}, $G$ has a bipartite ear decomposition
      $G=e+P_1+P_2+\ldots +P_k$ with $k\geq 1$.

   \begin{Cla}
      \label{Ham}
      $G$ has a Hamilton cycle.
   \end{Cla}

   \begin{proof}
      We apply induction on $k$. For $k=1$, it is trivial.
      Next assume that $k\geq 2$ and
      the graph $G_{k-1}=e+P_1+P_2+\cdots +P_{k-1}$ has a Hamilton cycle $C$.
      If $P_k$ is an edge, then $C$ is also a Hamilton cycle of $G$.
      So we may assume that $P_k$ has odd length at least $3$.
      Let $u$ and $v$ be the end vertices of $P_k$.
      Then $uv\in E(C)$ for
      otherwise, $C+P_k$ is a bisubdivision of $A_1$, which implies that $A_1$ is a conformal minor of $G_k$, a contradiction.
      So $C+P_k-uv$ is a Hamilton cycle of $G_k$.
   \end{proof}

   Let $C=v_1v_2\ldots v_nv_1$ be a Hamilton cycle of $G$.
   Then we fix a plane drawing of $C$ and consider all chords.
   In the following, all chords are drawn in the interior of $C$, and
   all bisubdivisions we made are not on any chord.
   If there is at most one chord, then $G\in \mathcal{B}_0\cup \mathcal{B}_1$.
   Next we consider the case there are at least two different chords.
   Since $G$ has no conformal minor $A_2$,
   each pair of chords is disjoint.
   Since $G$ has no conformal minor $A_3$,
   every pair of chords is either crossed or parallel.

   \begin{Cla}
      \label{CP}
      Each pair of crossed chords of $C$ is strongly crossed.
   \end{Cla}

   \begin{proof}
      Suppose to the contrary that two crossed chords $x_1x_2$ and $y_1y_2$ of $C$ are not strongly crossed, where both $x_1$ and $y_1$ are black.
      Then we can assume that $x_1y_2\notin E(C)$.
      Let $P_0$ be a path from $y_1$ to $x_2$ along $C$;
      and let $G'$ be the graph obtained from $C+x_1x_2+y_1y_2$ by deleting the interior vertices of $P_0$ together with their incident edges.
      Then $G'$ consists of
      three internally disjoint  paths
      from $x_1$ to $y_2$ of odd length at least 3 (see Fig. \ref{A1C2}),
      which is isomorphic to a bisubdivision of $A_1$.
      Since $|V(P_0)|$ is even,
      $G'$ is a conformal subgraph of $G$. So $A_1$ is a conformal minor of $G$, a contradiction.
   \end{proof}
   \begin{figure}[H]
         \vspace{3pt}
         \centerline{\includegraphics[width=0.15\textwidth]{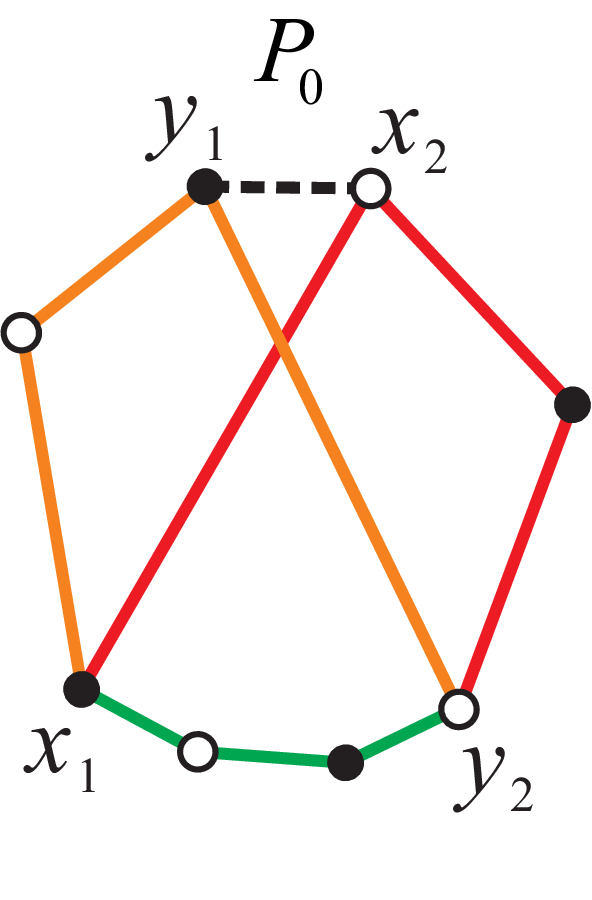}}
      \caption{\label{A1C2}Illustration for the proof of Claim \ref{CP}.}
   \end{figure}

   \begin{Cla}
      \label{cla2}
      There are no crossed pairs among three different chords (if exist).
   \end{Cla}

   \begin{proof}
      Suppose $C$ has three chords $x_1x_2$, $y_1y_2$ and $z_1z_2$,
      and $\{x_1x_2,y_1y_2\}$ is a crossed pair,
      where $x_1$, $y_1$ and $z_1$ are all black.
      By Claim \ref{CP}, $\{x_1y_2,x_2y_1\}\subseteq E(C)$.
      If $z_1z_2$ crosses both $x_1x_2$ and $y_1y_2$ (see Fig. \ref{F03}),
      then we get two new crossed pairs $\{x_1x_2,z_1z_2\}$ and $\{z_1z_2,y_1y_2\}$.
      Similarly, $\{x_1z_2,x_2z_1,z_1y_2,z_2y_1\}\subseteq E(C)$.
      Then $G=C+x_1x_2+y_1y_2+z_1z_2$ is isomorphic to $A_4$, a contradiction.
      So either $\{z_1z_2,x_1x_2\}$ or $\{z_1z_2,y_1y_2\}$
      is a parallel pair, say the former.
      Then $C+y_1y_2+z_1z_2$ is a bisubdivision of $A_3$, which implies that $A_3$ is a conformal minor of $G$, a contradiction.
   \end{proof}

      \begin{figure}[H]
            \vspace{3pt}
            \centerline{\includegraphics[width=0.35\textwidth]{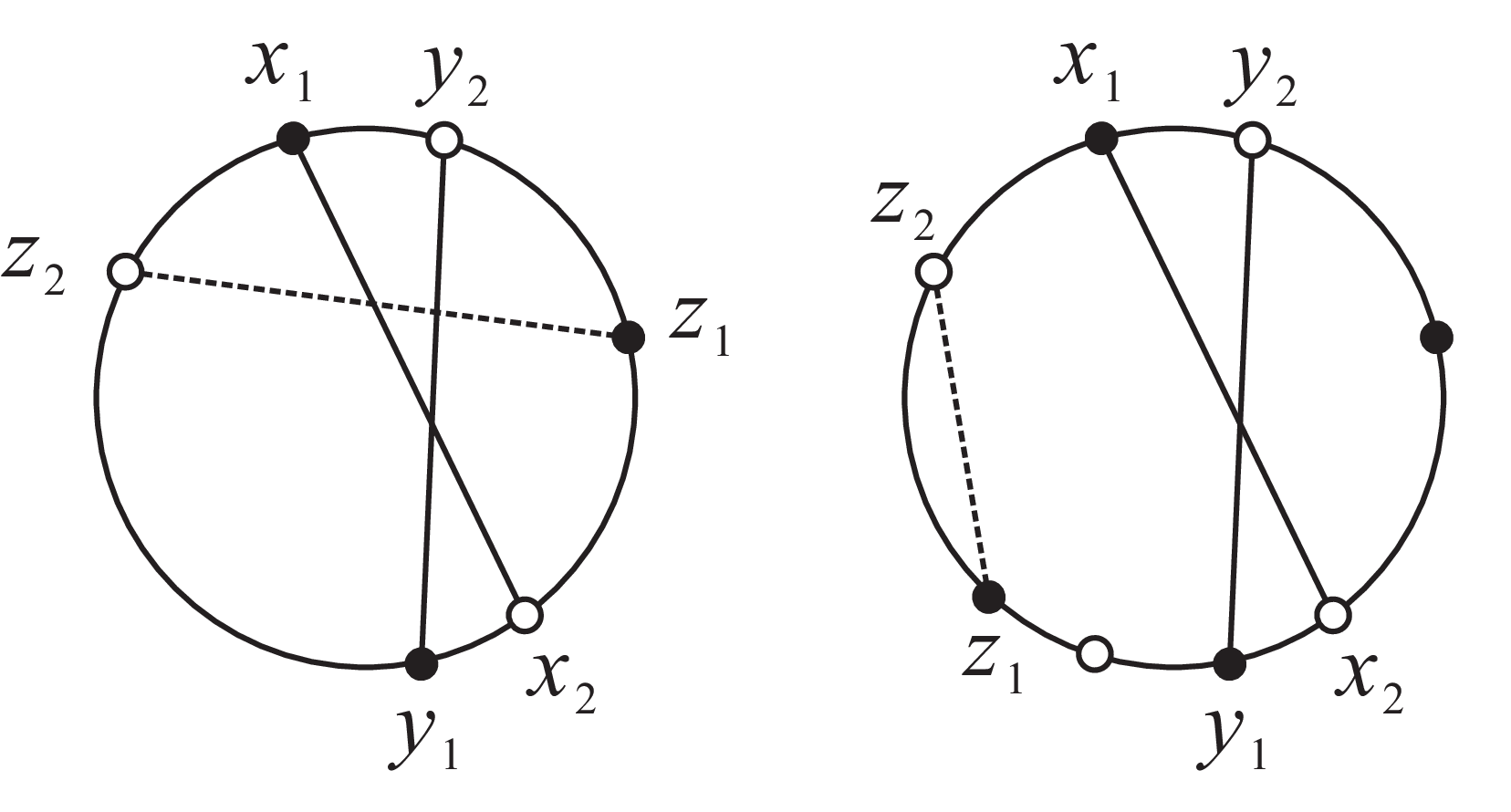}}
         \caption{\label{F03}Illustration for Claim \ref{cla2}.}
      \end{figure}
If $C$ has a pair of crossed chords, then Claims 2 and 3 imply that $G\in \mathcal B_2$ . Otherwise, $G\in \mathcal B_3$.     In conclusion, $G\in \cup_{i=0}^3\mathcal B_i$.

 iii) $\Rightarrow$ i). We can check that $G$ has no conformal minors in $\mathcal A$.
      Let $H$ be a conformal matchable subgraph of $G$. Let
      $H_1,H_2,\ldots,H_k$ be all elementary  components of  $H$, where $k\geq 1$.
      Then each $H_i$ is a matching covered bipartite subgraph without conformal minor in
      $\mathcal{A}$ for $1\leq i\leq k$, since a conformal minor of $H_i$ is also that of $G$.
      Since $gf(H)=\sum \limits_{i=1}^{k} gf(H_i)$ and $Af(H)=\sum \limits_{i=1}^{k} Af(H_i)$,
     it suffices to show that $gf(H_i)=Af(H_i)$.

      If $H_i$ is a $K_2$ (a complete graph with two vertices), then $gf(H_i)=Af(H_i)=0$. Otherwise, ii) $\Rightarrow$ iii) implies that $H_i\in \cup_{i=0}^3 \mathcal B_i$.
      If $H_i\in \mathcal B_0$, then $gf(H_i)=Af(H_i)=1$. If $H_i\in \mathcal B_1$ or $\mathcal B_3$ (see Fig. \ref{F02}), then
      it has a perfect matching containing all chords, say $M$.
      Further, we can find a compatible $M$-alternating set with size $c(H_i)$ where each pair of cycles  either is disjoint or shares only chords.
      So $Af(H_i)\geq af(H_i,M)\geq c(H_i)$.
      By Theorems \ref{Gf>Af} and \ref{gf(G)<c(G)},
      $c(H_i)\geq gf(H_i)\geq Af(H_i)\geq c(H_i)$, which implies that $gf(H_i)=Af(H_i)=c(H_i)$.
      The remaining case is  $H_i\in \mathcal B_2$. Then it has a perfect matching $M$ containing exactly one chord.
      Then we can also get $c(H_i)-1$ compatible $M$-alternating cycles.
      Since $H_i$ contains a nonconformal cycle, $c(H_i)-1\geq gf(H_i)$ by Theorem \ref{gf(G)<c(G)}.
      Once by Theorem \ref{Gf>Af}, $gf(H_i)=Af(H_i)=c(H_i)-1=2$.
      From above discussion, we can deduce that $gf(H)=Af(H)$ and $G$ is strongly uniform.
\end{proof}
We can see that  a matchable graph is strongly uniform if and only if each elementary component is  strongly uniform. So Theorem \ref{BMT} implies immediately the following corollary.

\begin{Cor}
   \label{gA}
   Let $G$ be a matchable bipartite graph.
   Then the following three statements are equivalent.
   \begin{enumerate}[i)]
      \item $G$ is strongly uniform;
      \item $G$ has no conformal minor in $\mathcal{A}$;
      \item Each elementary component of $G$  either is $K_2$  or belongs to  $\cup_{i=0}^3\mathcal B_i$.
   \end{enumerate}
\end{Cor}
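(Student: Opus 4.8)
The plan is to reduce the corollary to the matching covered case already settled in Theorem~\ref{BMT} by showing that all three conditions are determined componentwise. Write $H_1,\dots,H_k$ for the elementary components of $G$. Since $G$ is matchable, every vertex lies on an allowed edge and hence in exactly one $H_j$, so $V(G)=\bigsqcup_{j}V(H_j)$, each $H_j$ is matching covered, and restricting any perfect matching of $G$ (whose edges are all allowed, hence internal to the components) shows that each $H_j$ is a conformal subgraph of $G$ with $G-V(H_j)$ matchable.

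First I would prove the structural lemma that $G$ is strongly uniform if and only if every $H_j$ is strongly uniform, which is really a statement about conformal matchable subgraphs. For the forward implication, any conformal matchable subgraph $K$ of a fixed $H_j$ is again conformal in $G$, since conformality composes: $H_j-V(K)$ and $G-V(H_j)$ are matchable on disjoint vertex sets, so $G-V(K)$ is matchable; hence $gf(K)=Af(K)$. Conversely, given a conformal matchable subgraph $H$ of $G$ with elementary components $K_1,\dots,K_m$, I would extend a perfect matching of $H$ to one of $G$ (possible because $G-V(H)$ is matchable) and restrict it to the components of $G$; this shows each $K_l$ sits inside a single $H_{\sigma(l)}$ and, after re-inserting the remaining vertex-disjoint matchable components, that $K_l$ is conformal in $H_{\sigma(l)}$. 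Then $gf(K_l)=Af(K_l)$, and the additivity $gf(H)=\sum_l gf(K_l)$, $Af(H)=\sum_l Af(K_l)$ already used in Theorem~\ref{BMT} gives $gf(H)=Af(H)$.

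Next I would localize the conformal-minor condition: $G$ has a conformal minor in $\mathcal A$ if and only if some $H_j$ does. A bisubdivision $H'$ of some $A_i$ that is conformal in $G$ is connected and matching covered (each $A_i$ being matching covered), so every edge of $H'$ is allowed in $G$, forcing $H'$ into a single $H_j$, where the same restriction argument shows it remains conformal; conversely any conformal minor of $H_j$ is a conformal minor of $G$ because $H_j$ is conformal in $G$ and conformality composes.

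Finally I would apply Theorem~\ref{BMT} to each $H_j$: if $H_j=K_2$ then $gf(H_j)=Af(H_j)=0$, so $H_j$ is trivially strongly uniform and has no conformal minor in $\mathcal A$; otherwise $H_j$ is matching covered bipartite with at least four vertices, and Theorem~\ref{BMT} makes the three statements ``strongly uniform'', ``no conformal minor in $\mathcal A$'', and ``$H_j\in\bigcup_{i=0}^{3}\mathcal B_i$'' equivalent. Conjoining over all $j$ and feeding in the two localization steps yields i)$\Leftrightarrow$ii)$\Leftrightarrow$iii). I expect the only genuine difficulty to be the bookkeeping in the converse half of the structural lemma — checking that the individual elementary components $K_l$ of a conformal subgraph are separately conformal in their ambient components $H_{\sigma(l)}$ — together with the clean separation of the $K_2$ case from the ``at least four vertices'' hypothesis of Theorem~\ref{BMT}.
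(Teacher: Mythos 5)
Your proposal is correct and follows essentially the same route as the paper, which reduces the corollary to Theorem~\ref{BMT} by observing that strong uniformity, the conformal-minor condition, and the structural condition are all determined by the elementary components of $G$. The paper states this reduction in one sentence without proof; your write-up simply supplies the componentwise localization details (conformality composing, allowed edges confining conformal subgraphs to single elementary components, additivity of $gf$ and $Af$) that the paper leaves implicit.
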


\section{\label{GrEa} Strongly uniform BN-Graphs}



Like bipartite graphs, we further find 25 BN-graphs in Fig. \ref{F07}, each of which is nonbipartite and $gf(D_i)>Af(D_i)$ for each $1\leq i\leq 25$.
The verification is performed by a computer program  (see Table \ref{tab2}).

\begin{figure}[H]
   \begin{minipage}{1\linewidth}
      \vspace{3pt}
      \centerline{\includegraphics[width=1\textwidth]{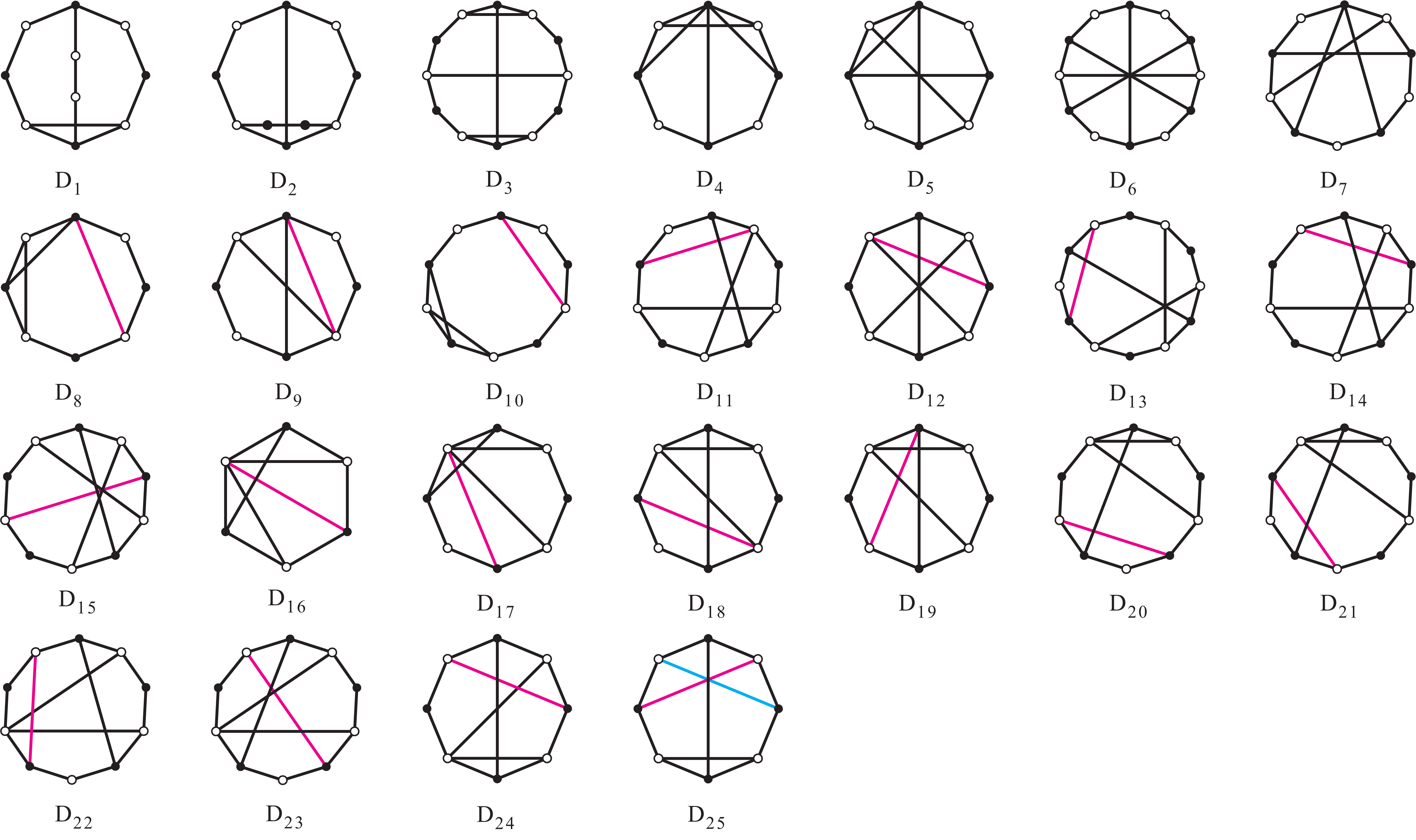}}
   \end{minipage}
   \caption{\label{F07} Excluded conformal minors $D_i$   for $1\leq i\leq 25$.}
\end{figure}

\begin{table}[ht]
   \centering
   \caption{\label{tab2}The global forcing and maximum anti-forcing numbers of all graphs in $\mathcal{D}$.}
   \begin{tabular}{|c|c|c|c|}
      \hline
      & $D_1,D_2$ & $D_3$ to $D_{24}$ & $D_{25}$\\ \hline
      $gf(D)$ & 2 & 3 & 4\\ \hline
      $Af(D)$ & 1 & 2 & 3\\ \hline
   \end{tabular}
\end{table}

By the definition,
each strongly uniform graph contains no conformal minors in $\mathcal{A}\cup \mathcal{D}$, where $\mathcal{D}$ denotes the set of graphs from $D_1$ to $D_{25}$.
Actually, the reverse is also true for BN-graphs. So we have the main result of this article as follows.

\begin{The}
   \label{odd-pre}
   Let $G$ be a matchable BN-graph.
   Then $G$ is strongly uniform if and only if $G$ has no conformal
   minor in $\mathcal{A}\cup \mathcal{D}$.
\end{The}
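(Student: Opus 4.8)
The plan is to prove the two implications separately, with the ``only if'' direction being routine and the ``if'' direction carrying all the structural weight.

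\emph{Forward direction.} Suppose $G$ is strongly uniform yet contains a conformal minor $J\in\mathcal A\cup\mathcal D$. Then $G$ has a conformal subgraph $H$ that is a bisubdivision of $J$. By Lemma \ref{ES}, $gf(H)=gf(J)$, while by Lemma \ref{ES-A}, $Af(H)\le Af(J)$. Since Tables \ref{AF} and \ref{tab2} record $gf(J)>Af(J)$ for every $J\in\mathcal A\cup\mathcal D$, this forces $gf(H)>Af(H)$, contradicting strong uniformity. This is verbatim the argument of i) $\Rightarrow$ ii) in Theorem \ref{BMT}.

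\emph{Reverse direction: reductions.} First I would reduce to a single-graph statement via two inheritance facts. (a) If $H$ is a conformal subgraph of a BN-graph $G$, then $H$ is again a BN-graph: an odd conformal bicycle $(C_1,C_2)$ of $H$ lifts to $G$, since a perfect matching of $G-V(H)$ (existing as $H$ is conformal) together with one of $H-V(C_1)-V(C_2)$ forms a perfect matching of $G-V(C_1)-V(C_2)$, so Theorem \ref{EBV} applies. (b) By transitivity of the conformal relation, a conformal minor of $H$ is a conformal minor of $G$, so ``no conformal minor in $\mathcal A\cup\mathcal D$'' is inherited by $H$. Combining (a) and (b) with the additivity of $gf$ and $Af$ over elementary components, and with the fact that a matchable graph is strongly uniform iff each of its elementary components is, it suffices to prove the single-graph statement: \emph{every matching covered BN-graph $G$ with no conformal minor in $\mathcal A\cup\mathcal D$ satisfies $gf(G)=Af(G)$.} Applying this to each elementary component of each conformal matchable subgraph then yields strong uniformity.

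\emph{Reverse direction: the single-graph statement.} Here I would split on bipartiteness. If $G$ is bipartite, then since every $D_i$ is nonbipartite only the exclusion of $\mathcal A$ is relevant, and Corollary \ref{gA} gives $gf(G)=Af(G)$ at once. If $G$ is nonbipartite, I would invoke a nonrefinable graded ear decomposition (Theorem \ref{GED}) and grow $G$ from its first odd cycle, attaching one or two ears at each step. The target is to show that excluding all $29$ graphs pins the global structure down to the four families isolated in Lemma \ref{BNFG}; for each such family, Lemma \ref{G1234E} establishes $gf(G)=Af(G)$ by exhibiting a perfect matching $M$ together with a compatible $M$-alternating set whose size meets (or falls one short of) the cyclomatic number $c(G)$, so that the general bound $Af(G)\ge af(G,M)\ge c'(G,M)$ combines with Theorems \ref{Gf>Af} and \ref{gf(G)<c(G)} to squeeze $Af(G)\le gf(G)\le c(G)$ into equality, exactly as in the iii) $\Rightarrow$ i) step of Theorem \ref{BMT}. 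The main obstacle is the structural determination in this nonbipartite case: one must certify that any ear attachment producing a configuration outside the four families simultaneously realizes a bisubdivision of some $D_i$ (or $A_i$) as a conformal subgraph. This forces one to control chord positions along the Hamilton-type cycle generated by the decomposition, to distinguish single- from double-ear steps in the nonrefinable decomposition, and to match each forbidden local pattern against the $29$ excluded minors. I would organize this casework precisely as Lemmas \ref{BNFG} and \ref{G1234E} and defer it, as the paper does, to the dedicated sections.
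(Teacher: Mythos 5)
Your proposal is correct and follows the paper's own proof essentially step for step: the forward direction via Lemmas \ref{ES} and \ref{ES-A} with Tables \ref{AF} and \ref{tab2}, and the reverse direction by reducing to the elementary components of each conformal matchable subgraph (which inherit both the BN property, via Theorem \ref{EBV}, and the exclusion of conformal minors), settling the bipartite components by Theorem \ref{BMT}, and deferring the nonbipartite structural classification and the equality $gf=Af$ to Lemmas \ref{BNFG} and \ref{G1234E}, exactly as the paper does.
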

\subsection{Proof of Theorem \ref{odd-pre}}
We just need to prove the sufficiency of Theorem \ref{odd-pre}. Suppose that $G$ does not have any conformal minor in $\mathcal{A}\cup \mathcal{D}$.
Let $H$ be a conformal matchable subgraph of $G$. Let  $H_1,H_2,\ldots,H_k$ be  the elementary components of $H$, where $k\geq 1$, which are matching covered. Similar to the proof of Theorem \ref{BMT} it suffices to prove that
$gf(H_i)=Af(H_i)$. Since $H_i$ is a conformal subgraph of $G$, each $H_i$ has no conformal minors  in $\mathcal{A}\cup \mathcal{D}$. Further,  by Theorem \ref{EBV}, each $H_i$ is also a BN-graph. Hence each $H_i$ is
 a matching covered BN-subgraph without conformal minor in
$\mathcal{A}\cup \mathcal{D}$.
If $H_i$ is bipartite,  then $gf(H_i)=Af(H_i)$ by Theorem \ref{BMT}, so we are done.
Otherwise, we will get a ``structural'' characterization of $H_i$ by Lemma \ref{BNFG}
and obtain $gf(H_i)=Af(H_i)$ by Lemma \ref{G1234E} to complete the entire proof.
%
%
The definitions of graph families  $\mathcal{G}_i$ for $0\leq i\leq 3$
will be given in the next subsection, and the proofs of  Lemmas \ref{BNFG} and \ref{G1234E} appear in Sections 5 and 6 respectively.

\begin{Lem}
   \label{BNFG}
   Let $G$ be a nonbipartite matching covered BN-graph without conformal minors in $\mathcal{A}\cup \mathcal{D}$.
   Then $G\in \bigcup\limits_{i=0}^3\mathcal{G}_i$.
\end{Lem}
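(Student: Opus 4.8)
The plan is to adapt the argument used for Theorem~\ref{BMT} to the nonbipartite setting, replacing the bipartite ear decomposition by the nonrefinable graded ear decomposition of Theorem~\ref{GED} and invoking Balas' characterization (Theorem~\ref{EBV}) to exploit the absence of odd conformal bicycles. As there, the backbone of the proof is to produce a Hamilton cycle of $G$ and then to read off the allowed positions of its chords; since $G$ is matchable it has even order, so any Hamilton cycle $C$ is an even cycle and carries a proper two-colouring of $V(G)$. The new phenomenon, absent from the bipartite case, is that nonbipartiteness must be witnessed by chords joining two vertices of the same colour (odd chords), which create odd cycles; the BN-hypothesis is precisely what will limit how such odd chords may occur and interact.

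First I would establish that $G$ has a Hamilton cycle. Starting from a nonrefinable graded ear decomposition $K_2=G_0\subset G_1\subset\cdots\subset G_m=G$, I would argue by induction that each $G_i$ is Hamiltonian, showing that attaching the ear-bundle $R_i$ (one or two ears) to a Hamiltonian $G_{i-1}$ either joins two vertices that are adjacent on the current Hamilton cycle---so that the cycle can be rerouted through the new ear---or else exhibits a bisubdivision of some graph in $\mathcal{A}\cup\mathcal{D}$ as a conformal subgraph of $G$, a forbidden conformal minor contradicting the hypothesis. I expect the graphs in $\mathcal{D}$ to be exactly the minimal obstructions generated by the nonbipartite ear steps, in particular by the two-ear attachments that produce odd cycles, so that forbidding them forces every new ear to collapse into a chord or a detour of the Hamilton cycle. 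This is the analogue of Claim~\ref{Ham}, but more delicate, because ears are added in pairs and because an attachment can create an odd cycle rather than an even one.

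Having fixed a Hamilton cycle $C$, I would then classify its chords, running the analogues of Claims~\ref{CP}--\ref{cla2} with parity bookkeeping added. For even chords the arguments excluding $A_2,A_3,A_4$ still force disjointness, the crossed-or-parallel dichotomy, strong crossing, and the absence of a crossed pair among three chords. The decisive extra ingredient is the interaction of the odd chords with the BN-constraint: by Theorem~\ref{EBV} the graph $G$ has no odd conformal bicycle, so no chord configuration may create two vertex-disjoint odd cycles with matchable complement, and this single restriction pins down where along $C$ the same-colour chords can sit and how many there can be. Matching each remaining forbidden pattern to a specific $D_i$---using Table~\ref{tab2}, which groups the twenty-five graphs by their $gf$- and $Af$-values---I would eliminate every configuration outside the four target families and thereby place $G$ in $\bigcup_{i=0}^{3}\mathcal{G}_i$.

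The main obstacle will be this bipartite-to-nonbipartite step. Establishing Hamiltonicity when the ear decomposition adds ears in pairs already requires care, since both ears of a two-ear step must be controlled at once and each non-Hamiltonian attachment must be realized as an explicit bisubdivision of one of the $D_i$. Harder still is organizing the chord analysis so that parity, strong crossing, and the BN-property are handled simultaneously: with twenty-five excluded graphs the case distinctions proliferate, and the real work is to show that the list $D_1,\dots,D_{25}$ is exhaustive, i.e. that every same-colour-chord pattern not appearing in some $\mathcal{G}_i$ produces one of them. I anticipate that the BN-hypothesis, through Theorem~\ref{EBV}, is exactly the lever---absent in the bipartite argument---that keeps this case analysis finite.
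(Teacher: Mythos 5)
Your first phase matches the paper: the paper's Lemma~\ref{Ham-2} also obtains a Hamilton cycle from the nonrefinable graded ear decomposition of Theorem~\ref{GED}, and the obstructions needed there are only $A_1$, $D_1$, $D_2$ together with Theorem~\ref{EBV} (in the two same-colour-ends case the cycle is rerouted through \emph{both} the new ear and a second path $P''$, not merely through a chord). But your second phase has a genuine gap: ``matching each remaining forbidden pattern to a specific $D_i$'' is a restatement of the problem, not a method, and you yourself flag that you do not know how to make the case analysis finite. The paper's proof works because of an organizing device that your plan lacks entirely: split the chords of $C$ by parity into two \emph{spanning} subgraphs, $G'$ (cycle plus all monochromatic chords) and $G''$ (cycle plus all bicolorable chords). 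Since these are spanning, their conformal subgraphs are conformal in $G$, so both inherit the excluded-minor hypothesis; Lemma~\ref{SubgMC} (whose proof is where Theorem~\ref{EBV} first bites: a black and a white chord must cross, else an odd conformal bicycle appears) shows both are matching covered. Then Theorem~\ref{BMT} is invoked as a \emph{black box} on the bipartite graph $G''$, giving $G''\in\bigcup_{i=0}^3\mathcal{B}_i$, while a separate lemma (Lemma~\ref{SCN}, using the exclusions of $A_1$ and $D_4$--$D_7$, not primarily the BN property) shows $G'\in\mathcal{G}_0$, i.e.\ $G'$ comes from one of $H_1,\dots,H_7$ with at most two black and at most two white chords.

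The remaining work --- the bulk of the paper's Section~5 --- is the combination step: four cases according to which $\mathcal{B}_i$ contains $G''$, proved in Lemmas~\ref{OBC}, \ref{G1B3} and \ref{CS}. The engine that keeps this finite is a chord-deletion recursion your proposal has no analogue of: deleting a bicolorable (or suitably chosen monochromatic) chord keeps the graph matching covered by Lemma~\ref{SubgMC} and keeps it a BN-graph without the excluded minors, so the smaller graph is already classified by an earlier case, and one only has to decide where a single extra chord can be attached (using Props.~\ref{pro1} and \ref{pro2} and specific minors among $D_8$--$D_{25}$). Without the $G'/G''$ split and this recursion, your plan of re-running Claims~\ref{CP}--\ref{cla2} ``with parity bookkeeping'' on all chords simultaneously faces exactly the combinatorial explosion you acknowledge, and it also glosses over a technical point: the conformality arguments in those claims use bipartiteness of the ambient graph (parity of $|V(P_0)|$), which fails once monochromatic chords are present, so they cannot be transplanted verbatim. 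Your reading of Table~\ref{tab2} as a guide to which $D_i$ blocks which configuration is also off --- the grouping there is by $gf$/$Af$ values, not by structural role.
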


\begin{Lem}
   \label{G1234E}
   If $G\in \bigcup\limits_{i=0}^3\mathcal{G}_i$, then $gf(G)=Af(G)$.
\end{Lem}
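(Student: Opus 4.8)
The plan is to prove the equality family by family, reducing each $G\in\mathcal{G}_i$ to a short list of primitive graphs and then settling those primitives by a two-sided squeeze. The squeeze is the one already used in direction iii) $\Rightarrow$ i) of Theorem \ref{BMT}: Theorem \ref{Gf>Af} gives $gf(G)\ge Af(G)$ for every matchable BN-graph, while Theorem \ref{gf(G)<c(G)} gives $gf(G)\le c(G)$, with equality exactly when all cycles are conformal. So if I can produce an integer $t$ with $Af(G)\ge t$ and $gf(G)\le t$, then $t\le Af(G)\le gf(G)\le t$ forces $gf(G)=Af(G)=t$.

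The reduction step is what makes the four families tractable. Since $gf$ is invariant under bisubdivision (Lemma \ref{ES}), bisubdivision never increases $Af$ (Lemma \ref{ES-A}) and preserves it along an edge witnessing the maximum anti-forcing number (Lemma \ref{Ev-NM}), and a quadrilateral subdivision on $E(G)\setminus M$ preserves the relation $gf=Af$ whenever $M$ attains $Af(G)$ (Corollary \ref{SS}), the property $gf=Af$ propagates under exactly the operations that generate each $\mathcal{G}_i$. It therefore suffices to verify $gf=Af$ on the finitely many primitive graphs of each family and, via Remark \ref{RMM}, to check that on each primitive the maximum anti-forcing number is attained by a perfect matching of the prescribed form, so that Corollary \ref{SS} can be applied when lifting back to the whole family.

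For the primitives I would run the squeeze by hand. Each primitive is built from a spanning cycle---carrying one odd arc in the nonbipartite cases---together with a controlled chord pattern. Choosing a perfect matching $M$ that contains all of the chords (or all but one, in the family playing the role of $\mathcal{B}_2$), I would exhibit a set of $M$-alternating cycles, pairwise meeting only in edges of $M$, each obtained by closing a chord with an arc of the spanning cycle; since an anti-forcing set must delete a non-$M$ edge from every $M$-alternating cycle (Lemma \ref{anti-forcing}) and these cycles have disjoint non-$M$ edges, the construction gives $Af(G)\ge af(G,M)\ge t$. For the matching upper bound, if every cycle of the primitive is conformal then $t=c(G)$ and Theorem \ref{gf(G)<c(G)} yields $gf(G)\le c(G)=t$ immediately; if the primitive carries a nonconformal cycle, the same theorem forces the strict drop $gf(G)\le c(G)-1$, and a compatible $M$-alternating set of the corresponding smaller size closes the argument just as it did for $\mathcal{B}_2$.

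The hard part will be the interaction between odd cycles and conformality. Unlike the bipartite case, a cycle of a nonbipartite BN-graph may fail to be conformal, and since conformality here is constrained by the absence of odd conformal bicycles (Theorem \ref{EBV}), I must read off from each chord pattern precisely which cycles are conformal in order to fix the correct target $t$ and to guarantee that the compatible $M$-alternating set attains it. Determining, for every member of all four families, both the number of independent nonconformal cycles and a perfect matching achieving $Af$ is the delicate bookkeeping on which the whole lemma turns.
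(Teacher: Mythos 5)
Your overall architecture (reduce each family $\mathcal{G}_i$ to its finitely many fundamental graphs, then close a two-sided squeeze on those) matches the paper's, but your mechanism for the upper bound on $gf$ fails on most of the nonbipartite fundamental graphs, and this is where the lemma actually lives. Your only tool for bounding $gf$ from above is Theorem \ref{gf(G)<c(G)}: $gf(G)\le c(G)$, dropping to $c(G)-1$ when some cycle is nonconformal. Every graph in Figs. \ref{F09} to \ref{F12} is nonbipartite, hence has odd (automatically nonconformal) cycles, so you are always in the $gf\le c-1$ case; but for many of these graphs the true value is $c-2$. For instance $H_2$ and $H_3$ are a Hamilton cycle plus three chords, so $c=4$, yet $gf=2$ by Table \ref{tab3}; likewise $H_4,H_5,H_6$ have $c=5$ and $gf=3$, and $H_{1,2},H_{1,3},H_{1,5}$ have $c=4$ and $gf=2$. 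For all of these your squeeze cannot close: the largest compatible $M$-alternating set you can exhibit has size $c-2$ (anything bigger would contradict $Af\le gf$), leaving a gap of one against your upper bound $c-1$. Your closing remark about counting ``independent nonconformal cycles'' points at the right phenomenon, but no lemma in the paper converts such a count into $gf\le c-k$ for $k\ge 2$; to do this by hand you would need to exhibit, via Lemma \ref{global-forcing}, an explicit edge set of size $c-2$ meeting every conformal cycle (equivalently, a spanning connected subgraph of cyclomatic number $2$ with no conformal cycle). The paper does not attempt this by hand at all: it verifies $gf(H)$ and $Af(H)$ for the $26$ fundamental graphs by computer (Table \ref{tab3}), transfers the $gf$ value to $G$ by Lemma \ref{ES}, bounds $Af(G)\le Af(H)$ by Lemma \ref{ES-A}, and matches this from below with explicit compatible $M$-alternating sets.

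A second, smaller gap: you transfer the value of $Af$ from the primitive to the general member through bisubdivisions via Lemma \ref{Ev-NM}, which requires every bisubdivided edge to avoid a fixed maximum-attaining perfect matching. The replaceable sets need not satisfy this: in $H_2$, for example, any perfect matching containing the two crossed monochromatic chords (the matching the paper uses to attain $Af$) must also contain a Hamilton-cycle edge, and the cycle edges are exactly the replaceable ones. The paper's proof of Lemma \ref{G123} never transfers the lower bound at all; it constructs the compatible $M$-alternating cycles directly inside the bisubdivided graph $G$ (cycles ``corresponding to'' those of the fundamental graph), so no hypothesis on which edges were subdivided is needed. By contrast, your use of Remark \ref{RMM} and Corollary \ref{SS} to handle the quadrilateral subdivisions in $\mathcal{G}_3$ is exactly what the paper does in Lemma \ref{G24}, and that part of your plan is sound.
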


Actually, the reverse of Lemma \ref{BNFG} also holds. But its proof is tedious, and we will not show it in this paper.


\subsection{Definitions of \texorpdfstring{$\mathcal{G}_i$ for $0\leq i\leq 3$}{Gi for 0 <= i <= 3}}
We find 26 BN-graphs shown in Figs. \ref{F09} to \ref{F12} that are all nonbipartite, and
verify by computer that these graphs have equal global forcing numbers and maximum anti-forcing numbers
(see Table \ref{tab3}). Note that all such graphs  are obtained from
an even cycle  by adding some  chords.
Next we   define $\mathcal{G}_i$ based on these graphs.

\begin{figure}[H]
   \vspace{3pt}
   \centerline{\includegraphics[width=0.75\textwidth]{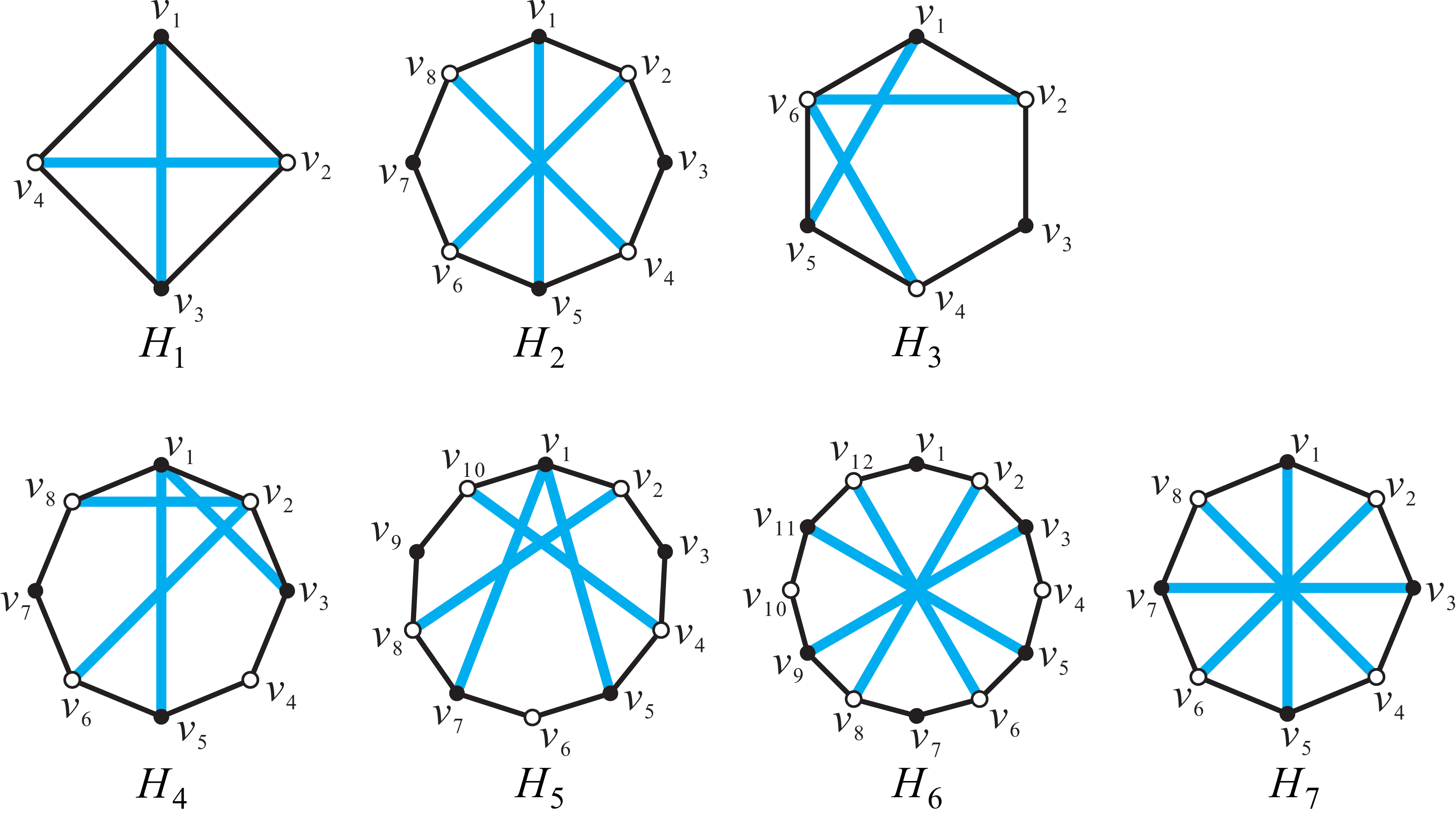}}
\caption{\label{F09} Fundamental graphs in $\mathcal{G}_0$.}
\end{figure}

\begin{figure}[ht]
   \vspace{3pt}
   \centerline{\includegraphics[width=1\textwidth]{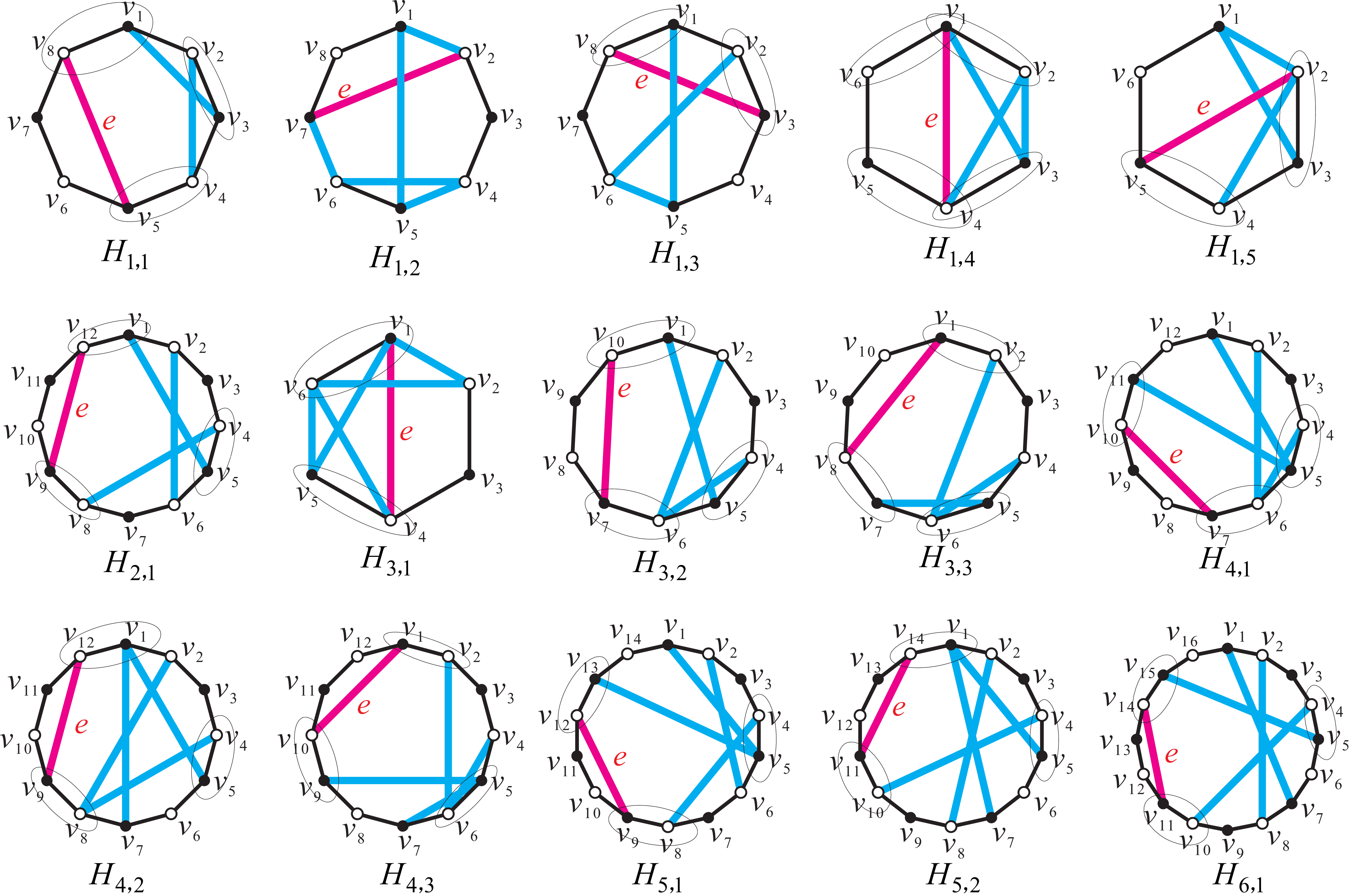}}
\caption{\label{F10} Fundamental graphs in $\mathcal{G}_1$: $H_{i,j}$ is obtained from a
bisubdivision of $H_i$ by adding a chord $e$ (red), where $1\leq i\leq 6$.}
\end{figure}

\begin{figure}[H]
   \vspace{3pt}
   \centerline{\includegraphics[width=0.4\textwidth]{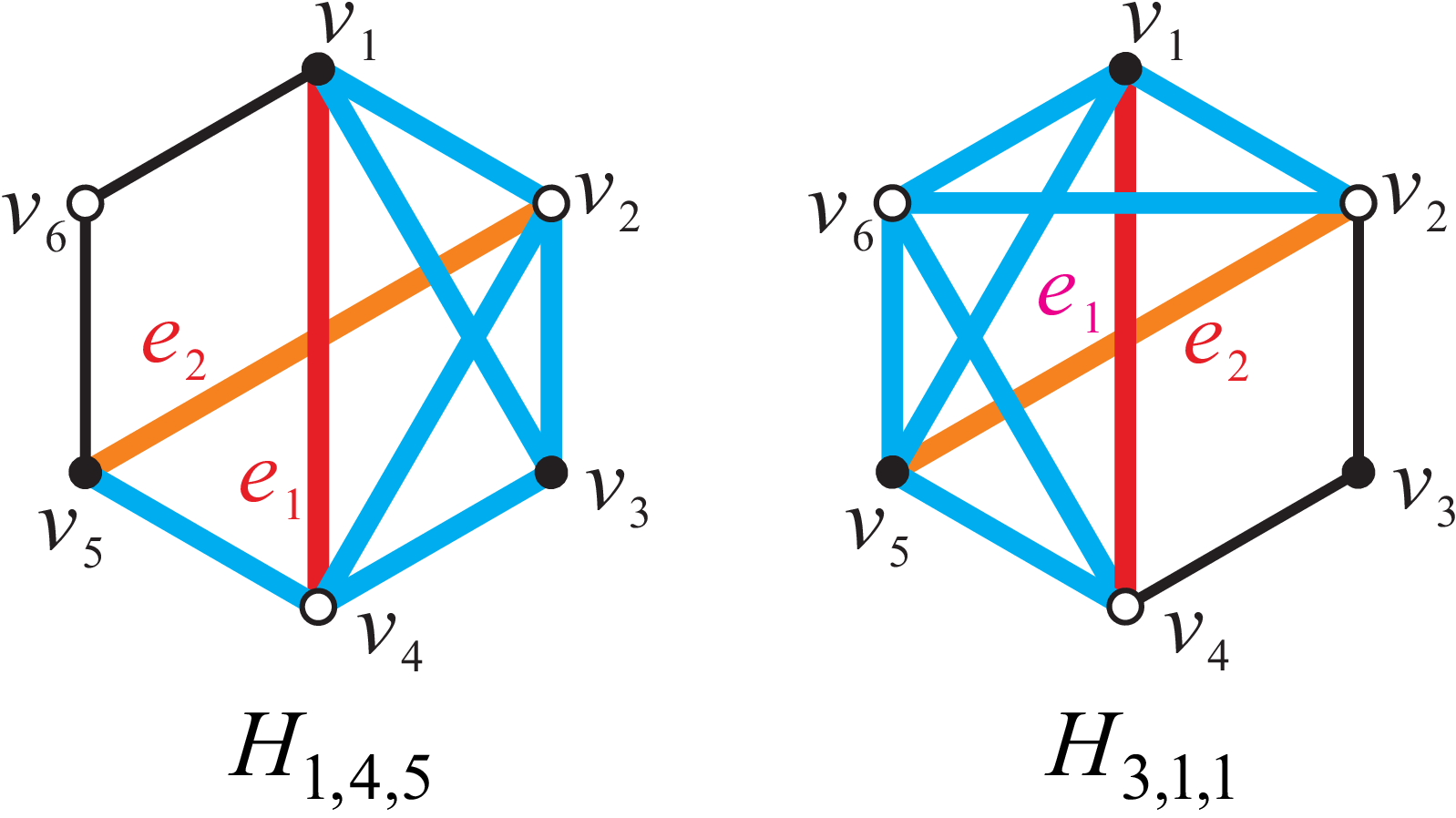}}
\caption{\label{F13} Fundamental graphs in $\mathcal{G}_2$: $H_{1,4,5}-e_2=H_{1,4}$,
$H_{1,4,5}-e_1=H_{1,5}$ and $H_{3,1,1}-e_i=H_{3,1}$ for $i=1,2$.}
\end{figure}

\begin{figure}[H]
   \vspace{3pt}
   \centerline{\includegraphics[width=0.4\textwidth]{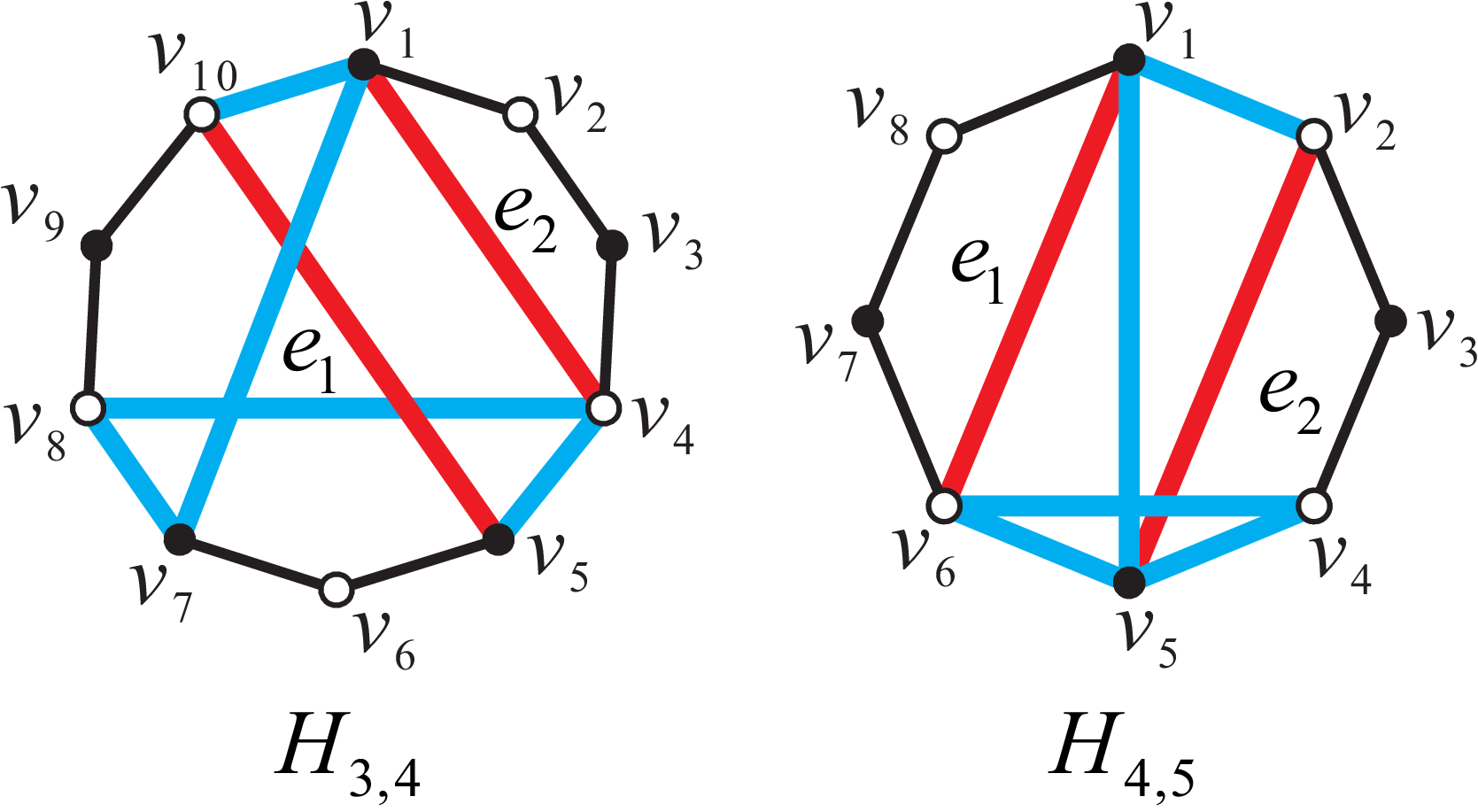}}
\caption{\label{F12}
$H_{3,4}-e_2$ and $H_{3,4}-e_1$ are bisubdivisions of $H_{1,3}$ and $H_{1,4}$, respectively. $H_{4,5}-e_2$ and $H_{4,5}-e_1$ are bisubdivisions of
$H_{1,4}$ and $H_{1,5}$, respectively.}
\end{figure}

\begin{table}[H]
   \centering
   \caption{\label{tab3} The values for $gf(H)$ and $Af(H)$ of all graphs $H$ in
   Figs. \ref{F09} to \ref{F12}.}
   \begin{tabular}{|c|c|c|c|}
      \hline
      & $H_1$ to $H_3$ & $H_4$ to $H_6$, $H_{1,1},H_{1,4},H_{2,1}$ to $H_{3,3}$ &
      $H_7$ \\ &  $H_{1,2}, H_{1,3},H_{1,5}$ & $H_{1,4,5}$, $H_{3,1,1}$, $H_{3,4}$, $H_{4,5}$ & $H_{4,1}$ to $H_{6,1}$\\
      \hline
      $gf(H)=Af(H)$ & 2 & 3 & 4\\ \hline
   \end{tabular}
\end{table}
In drawings of these graphs the thin edges (black) are called \emph{replaceable} and the others (bold or colored edges) are \emph{irreplaceable edges}. 
All replaceable edges of these fundamental graphs other than $H_{1,3}$ and $H_{1,5}$ form a \emph{replaceable set}.
For $H_{1,3}$, it has  seven thin edges, which do not form a replaceable set; in fact, $H_{1,3}$ has exactly two \emph{replaceable sets}:
$\{v_1v_2,v_3v_4,v_4v_5,v_6v_7,v_7v_8\}$ and $\{v_1v_8,v_2v_3,v_3v_4,v_4v_5,v_6v_7,v_7v_8\}$;  for  $H_{1,5}$, it also has exactly two \emph{replaceable sets}:
$\{v_2v_3,v_4v_5,v_5v_6,v_6v_1\}$ and $\{v_3v_4,v_5v_6,v_6v_1\}$.

Let $\mathcal{G}_0$, $\mathcal{G}_1$ and $\mathcal{G}_2$ be the set of graphs
obtained from a graph in Figs. \ref{F09}, \ref{F10} and \ref{F13} by some bisubdivisions on a \emph{replaceable set}, respectively.

The definition of $\mathcal{G}_3$ employs not only bisubdivisions, but also quadrilateral subdivisions.
For  $H_{1,4}$, two \emph{strong replaceable sets} are defined as $\{v_1v_2,v_3v_4\}$ and $\{v_1v_6,v_4v_5\}$.
For the other graphs in Fig. \ref{F10}, \emph{strong replaceable sets} are their edges indicated by ellipses.

Let $\mathcal{G}_3$ be the set of  graphs obtained from $H_{3,4}$ and $H_{4,5}$
by performing  bisubdivisions on their replaceable sets, together with 
 those  from each graph from  $H_{1,1}$ to $H_{6,1}$ except $H_{1,2}$ in Fig. \ref{F10} by performing
bisubdivisions on their replaceable sets and quadrilateral subdivisions on their strong replaceable sets.

\subsection{Operations on non-(strong) replaceable sets}

We adopt the following convention in Propositions \ref{pro1} and \ref{pro2} below: a bisubdivision of an irreplaceable set is defined to be either a bisubdivision of at least one irreplaceable edge in the outer cycle or a bisubdivision of two replaceable edges that lie in distinct replaceable sets such that each replacement-path has length at least three; and requiring a quadrilateral subdivision on an edge set $S$ such  that ``every'' replacement-path has length at least $5$ (See subsection 2.3, where  $k_i\ge 1$ for each edge $e_i$).


\begin{Pro}
   \label{pro1}
   For graphs  $H_{1,2}$ to $H_{1,5}$, $H_{3,1}$, $H_{1,4,5}$, $H_{3,1,1}$, $H_{3,4}$ or $H_{4,5}$,
   if we perform a bisubdivision on an irreplaceable set that lies entirely on the outer cycle, then the resulting graph contains a conformal minor $A_1$, $D_1$, $D_2$ or $D_9$.
\end{Pro}
\begin{proof}
   For  $H_{1,2}$, it has three irreplaceable edges $v_1v_2$, $v_4v_5$ and $v_6v_7$ on the outer cycle.
   When  performing a bisubdivision on an irreplaceable edge, we get a graph containing a conformal minor $A_1$, $D_1$ or $D_2$, which can be seen clearly from another drawing shown in Fig. \ref{E12B}.

   For  $H_{1,3}$, it has exactly one irreplaceable edge $v_5v_6$ on the outer cycle.
  If we  perform a bisubdivision on $v_5v_6$, then the resulting graph contains a conformal minor $A_1$; see  the second  drawing in Fig. \ref{E12B}.
   Further,  $H_{1,3}$  has the other irreplaceable sets $\{v_1v_2,v_1v_8\}$ and $\{v_1v_2,v_2v_3\}$. We   perform a bisubdivision on them to get a graph with a conformal minor $D_1$ (deleting $v_5v_6$); see  the third  drawing in Fig. \ref{E12B}.

   For  $H_{1,4}$, it has exactly one irreplaceable edge $v_2v_3$ on the outer cycle.
   If we  perform a bisubdivision on $v_2v_3$, then the resulting graph contains a conformal minor $D_9$.

   For  $H_{1,5}$, we  perform a bisubdivision on exactly one irreplaceable edge $v_1v_2$ to get a graph with a conformal minor $A_1$; see
   the fourth drawing in Fig. \ref{E12B}.
   We   perform a bisubdivision on irreplaceable sets $\{v_3v_4,v_2v_3\}$ or $\{v_3v_4,v_4v_5\}$,
   to get a graph with a conformal minor $D_1$ or $D_2$ (deleting $v_1v_2$); see
   the last drawing in Fig. \ref{E12B}.
\begin{figure}[H]
      \begin{minipage}{1\linewidth}
         \vspace{3pt}
         \centerline{\includegraphics[width=1\textwidth]{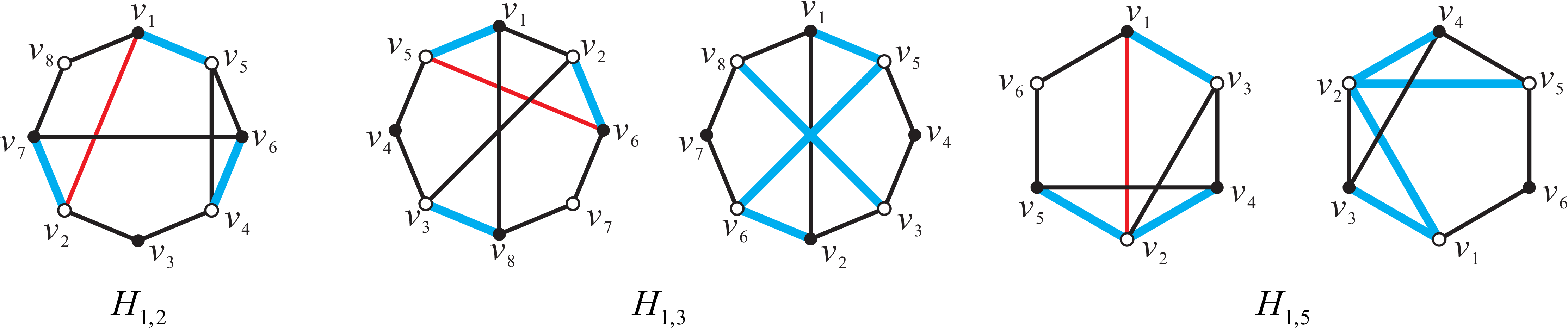}}
      \end{minipage}
      \caption{\label{E12B} New drawings of $H_{1,2}$, $H_{1,3}$ and $H_{1,5}$.}
   \end{figure}
   For  $H_{3,1}$, it has two irreplaceable edges $v_1v_2$ and $v_5v_6$ on the outer cycle.
   If we  perform a bisubdivision on $v_1v_2$ or $v_5v_6$,
   then the resulting graph contains a conformal minor $A_1$ or $D_9$.

   For  $H_{1,4,5}$, it has four irreplaceable edges $v_1v_2$, $v_2v_3$, $v_3v_4$ and $v_4v_5$ on the outer cycle.
   When  performing a bisubdivision on $v_1v_2$ or  $v_4v_5$, and $v_2v_3$ or $v_3v_4$, we get a graph with a conformal minor $A_1$ and $D_9$ respectively; see the left drawing in Fig. \ref{F11-2} for $v_3v_4$.

   For  $H_{3,1,1}$, it has four irreplaceable edges
   $v_1v_2$, $v_4v_5$, $v_5v_6$ and $v_1v_6$ on the outer cycle.
   If we   perform a bisubdivision on at least one of these edges, then the resulting graph contains a conformal minor $A_1$ or $D_9$.

   \begin{figure}[H]
      \begin{minipage}{1\linewidth}
         \vspace{3pt}
         \centerline{\includegraphics[width=0.5\textwidth]{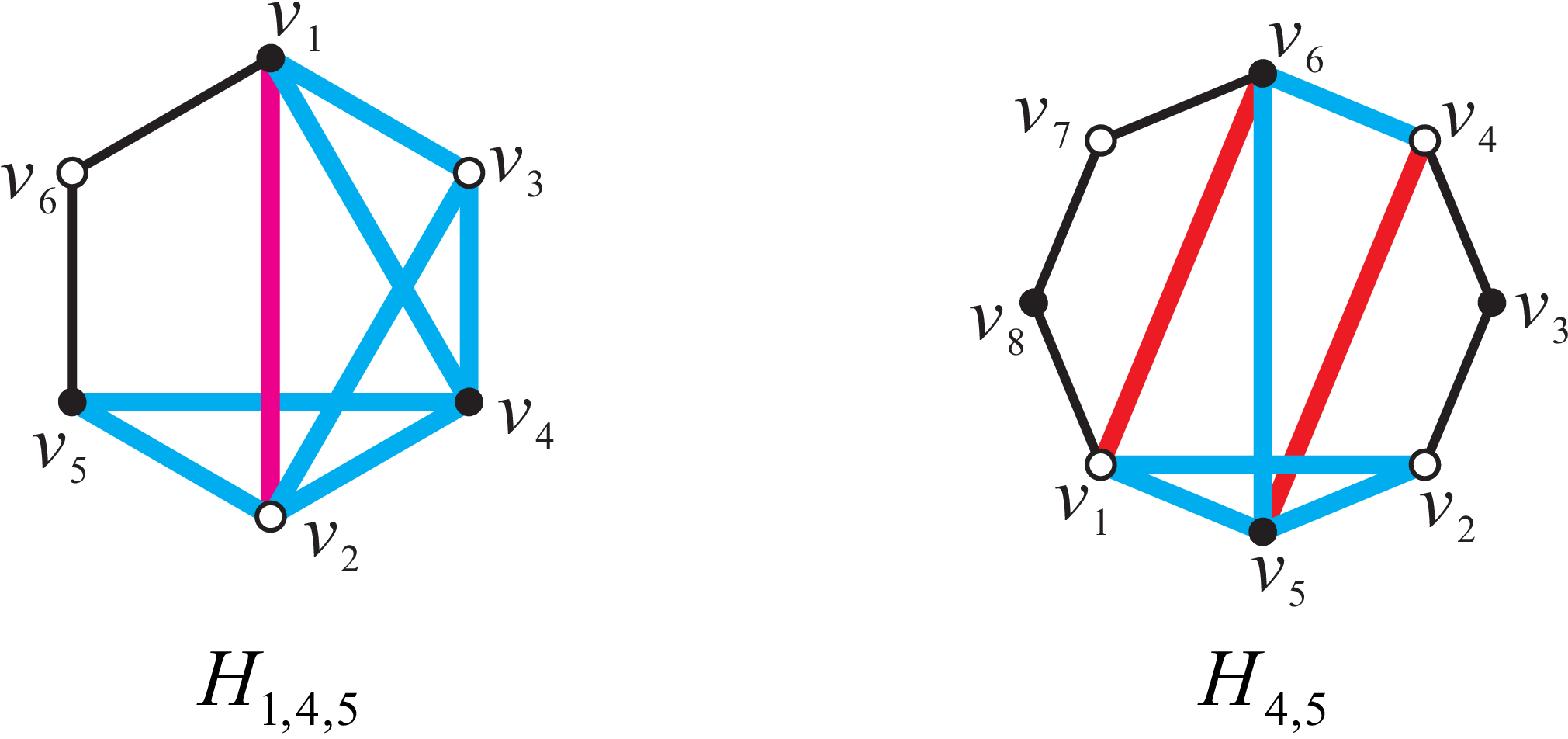}}
      \end{minipage}
      \caption{\label{F11-2} New drawings of $H_{1,4,5}$ and $H_{4,5}$.}
      \end{figure}

   For  $H_{3,4}$, we  perform a bisubdivision on irreplaceable edge $v_7v_8$,  $v_1v_{10}$ or $v_4v_5$
   to get a graph with a conformal minor $A_1$ and  $D_1$ respectively similar to the discussion on $H_{1,3}$.

   For  $H_{4,5}$, it has three irreplaceable edges $v_1v_2$, $v_4v_5$ and $v_5v_6$ on the outer cycle.
   If we  perform a bisubdivision on $v_4v_5$,
   then the resulting graph contains a conformal minor $D_9$.
   According to new drawings of $H_{4,5}$, if we perform bisubdivision on
   $v_1v_2$ or $v_5v_6$, then the resulting graph contains a conformal minor
   $D_2$ or $D_1$.
\end{proof}

\begin{Pro}
   \label{pro2}
   For each of the graphs  $H_{3,4}$, $H_{4,5}$ and $H_{1,1}$ to $H_{6,1}$ in Fig.~\ref{F10},
   let $S$ be a nonempty subset of a replaceable set such that $S$ is not contained in any strongly replaceable set. If  a quadrilateral subdivision is performed on $S$,
   then the resulting graph contains $A_3$ or $D_{10}$ a conformal minor.
\end{Pro}
\begin{proof}
   For $H_{3,4}$ and $H_{4,5}$, they have no strong replaceable sets.
   When   performing a quadrilateral subdivision on a replaceable edge, we can see that the resulting graph has a conformal minor $A_3$;
   For  $H_{1,2}$, it has no strong replaceable sets.
   If we   perform a quadrilateral subdivision on a replaceable edge, then the resulting graph contains a conformal minor $A_3$ or $D_{10}$; For $H_{1,4}$, it has exactly two strong replaceable sets
   $S_1=\{v_1v_2,v_3v_4\}$ and $S_2=\{v_1v_6,v_4v_5\}$.  Let $S$ be a replaceable set of $H_{1,4}$ that is not  a strongly replaceable set. If $S$ contains $v_5v_6$, then performing a quadrilateral subdivision on $S$  results in a graph containing a conformal minor $D_{10}$. Otherwise,  $S$ contains at least one edge from $S_1$ and at least one edge from $S_2$. In this case,
  performing a quadrilateral subdivision on $S$  results in a graph containing a conformal minor $A_3$;
   Similarly, for each graph from $H_{1,1}$ to $H_{6,1}$ except $H_{1,2}$ and $H_{1,4}$,
   performing a quadrilateral subdivision on a non-strong replaceable set yields a graph containing the  conformal minor $A_3$ or $D_{10}$.
\end{proof}

Props. \ref{pro1} and \ref{pro2} explain the reason why
we make bisubdivisions only on their replaceable sets and
quadrilateral subdivisions only on their strong replaceable sets for
all graphs in Figs. \ref{F09} to \ref{F12}. These restrictions are
necessary for the resulting graphs to contain no conformal minors in
$\mathcal{A}\cup\mathcal{D}$. 

In the following, we show that these restrictions are also sufficient.
When we say that $G$ is obtained from some graph $H$ in Figs. \ref{F09} to \ref{F12}, it means that $G$ is obtained from $H'$ by performing bisubdivisions on its replaceable set and/or quadrilateral subdivisions on its strong replaceable set by default, where $H'$ is either $H$ or is obtained from $H$ by adding some chords.

\subsection{Hamilton cycle}
Similarly to the proof  of Theorem \ref{BMT}, we first get a Hamilton cycle $C$ for each matching covered BN-graph without conformal minors in $\mathcal{A}\cup \mathcal{D}$.

\begin{Lem}
   \label{Ham-2}
   Let $G$ be a  matching covered BN-graph with at least four vertices and
   without conformal minors  $A_1,D_1$, or $D_2$. Then $G$ has a Hamilton cycle.
\end{Lem}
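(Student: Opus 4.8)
The plan is to mirror the bipartite argument in Claim~\ref{Ham}, but replace the bipartite ear decomposition by a nonrefinable graded ear decomposition (Theorem~\ref{GED}) and induct along it. Fix such a decomposition $K_2=G_0\subset G_1\subset\cdots\subset G_m=G$, in which every $G_i$ is a matching covered conformal subgraph of $G$ and each $R_i$ (the ears added to pass from $G_{i-1}$ to $G_i$) consists of one or two disjoint ears. I would prove by induction on $i$ that $G_i$ has a Hamilton cycle $C_i$; since $G_m=G$ this gives the lemma. For the base, $R_1$ cannot be a double ear: two odd ears joining the two vertices of $K_2$ form, together with the edge of $K_2$, an all-odd theta that spans $G_1$ and is therefore a conformal bisubdivision of $A_1$, which is excluded. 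Hence $R_1$ is a single ear and $G_1$ is an even cycle, so the induction starts.

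\textbf{Single ear.} Suppose $R_i$ is one ear $P$ with ends $u,v$. If $P$ is a single edge, then $C_{i-1}$ is already a Hamilton cycle of $G_i$. If $P$ has length at least three, I claim $u$ and $v$ are consecutive on $C_{i-1}$, and then replacing the edge $uv$ of $C_{i-1}$ by $P$ gives $C_i$. Indeed, if $u,v$ were non-consecutive, then $C_{i-1}\cup P$ is a theta spanning $G_i$ and hence conformal; when the two $C_{i-1}$-arcs between $u$ and $v$ are both odd it is a bisubdivision of $A_1$, the excluded case. The remaining possibility, both arcs even, is where matching-coveredness enters: a single ear of length $\geq 3$ is addable only if $G_{i-1}-u-v$ has a perfect matching, and the two now odd-order arcs cannot supply one by themselves, so a chord of $C_{i-1}$ crossing from one arc to the other must exist; a short parity check on this crossing chord together with $P$ and the arcs then either reroutes $C_{i-1}$ into a Hamilton path from $u$ to $v$ (which $P$ closes to $C_i$) or exhibits a spanning conformal bisubdivision of $D_1$ or $D_2$, a contradiction.

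\textbf{Double ear.} This is the genuinely nonbipartite case and I expect it to be the main obstacle. Let $R_i=P'\cup P''$ be two disjoint ears; nonrefinability, that neither $G_{i-1}+P'$ nor $G_{i-1}+P''$ is matching covered, constrains the parities of the arcs that the at most four endpoints cut from $C_{i-1}$. The key local fact is that in any Hamilton cycle of $G_i$ each branch vertex (an ear endpoint) has degree three, so it must omit exactly one of its two incident $C_{i-1}$-arcs; since an omitted arc cannot orphan interior vertices, every omitted arc must be a single edge. I would therefore show that excluding $A_1,D_1,D_2$ forces the four endpoints into positions in which the arcs to be bypassed are exactly single edges (the two ears attaching at pairs of consecutive vertices in a crossing pattern), while every other position carries a spanning conformal bisubdivision of $A_1$, $D_1$ or $D_2$. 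In the surviving positions the two ears can be threaded into $C_{i-1}$, traversed in full and bypassing precisely the single-edge arcs, to build $C_i$.

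The delicate part, where I expect the real work, is the combined position analysis of the forced crossing chord in the even-arc single-ear case and of the two ears in the double-ear case: for each admissible placement one must decide whether the configuration threads into a Hamilton cycle or instead carries an explicit conformal bisubdivision of one of $A_1,D_1,D_2$, and in the latter case verify that the exhibited subgraph really is conformal (it spans, or its complement is matchable) and that its cycle parities match the intended excluded theta. Aligning those parity patterns with the two nonbipartite minors $D_1$ and $D_2$ is what makes the \emph{nonrefinable} graded ear decomposition, rather than an arbitrary one, the right tool, since nonrefinability is exactly what fixes the arc parities separating the admissible placements from the excluded ones.
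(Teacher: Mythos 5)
Your overall framework (induction along the nonrefinable graded ear decomposition of Theorem \ref{GED}), your base case, and your treatment of a single ear with ends of different colors all match the paper. But there is a central missing idea: you never use the hypothesis that $G$ is a BN-graph, i.e.\ Theorem \ref{EBV}, and the lemma's case analysis cannot be closed with the three excluded minors alone. In the same-color (both arcs even) case, what matching-coveredness actually gives is this: since each arc interior has one more white vertex than black, any perfect matching of $G_{i-1}-u-v$ must contain at least one white-white edge, but nothing forces that edge to \emph{cross} between the two arcs -- it may be a chord joining two white vertices on the \emph{same} arc; and the crossing edges guaranteed by your parity argument may well be bicolorable. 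Neither configuration (black-black ear plus a same-side white-white chord, or plus a bicolorable crossing chord) contains a bisubdivision of $A_1$, $D_1$ or $D_2$, since $D_1$ and $D_2$ require a black-black path and a white-white path that cross; so your claimed dichotomy ``reroute into a Hamilton cycle or exhibit $D_1$/$D_2$'' does not follow from what you wrote. The paper eliminates exactly the same-side configuration by the BN hypothesis: a white-white path with both ends on one arc, together with the monochromatic ear and the opposite arc, produces two disjoint odd cycles whose removal leaves a matchable graph -- an odd conformal bicycle -- contradicting Theorem \ref{EBV}. Only after this step is the surviving white-white connection forced to cross the ear, and then excluding $D_1,D_2$ pins two opposite arcs of the resulting four-arc partition to single edges, yielding the Hamilton cycle.

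The second gap is the double-ear case, which you correctly identify as the crux but leave as a plan, while also misreading what nonrefinability is for. The paper's dichotomy is not ``one ear versus two'' but by color of ear ends: if some ear $P'$ of $R_k$ has ends of different colors, then $C'+P'$ is matching covered, hence so is $G'+P'$, and nonrefinability then forces $R_k$ to consist of that single ear. That is the only use of nonrefinability in the proof; it does not ``fix arc parities.'' In the remaining case every ear of $R_k$ is monochromatic, and the matching/BN argument above produces the crossing white-white path $P''$, which can be taken to be the second ear when one is present, so the assembled Hamilton cycle $P+P_2+P''+P_4$ covers its interior vertices as well. Your proposed position analysis of two ears, carried out without the color dichotomy and without the odd-conformal-bicycle argument, would have to rediscover both of these ingredients; as sketched it is not a proof.
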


\begin{proof}
   According to Theorem \ref{GED}, $G$ has a nonrefinable graded ear decomposition $G=e+R_1+R_2+\ldots +R_k$
   such that each $R_i$ contains at most two ears, where $k\geq 1$.
   We will apply induction on $k$.
   For $k=1$, $R_1$ contains exactly one ear, since the above graded ear decomposition is nonrefinable.
   Then $G$ is an even cycle, and the result is obvious.
   Next we assume $k\geq 2$.
   By induction hypothesis, $G':=e+R_1+R_2+\ldots +R_{k-1}$ has a Hamilton cycle $C'$ with a proper coloring.
   If each ear in $R_k$ has length one, then $C'$ is also a Hamilton cycle of $G$.
   Next assume that $R_k$ contains an ear with length at least three, say $P$.

   \begin{Cas}
      \label{CRPD}
      $R_k$ contains an ear whose end vertices have different colors.
   \end{Cas}

   Let $P'$ be an ear in $R_k$ with end vertices $u$ and $v$,
   which have different colors.
   Then we can check that $G'+P'$ is matching covered, since $C'+P'$ is matching covered.
   By the definition of nonrefinable, $G_k=G'+P'$ and $P'=P$.
   If $uv\notin E(C')$, then $C'+P$ forms a bisubdivision of $A_1$, a contradiction.
   So $uv\in E(C')$ and $C=C'-uv+P$ is a Hamilton cycle of $G$.

   \begin{figure}[H]
         \vspace{3pt}
         \centerline{\includegraphics[width=0.45\textwidth]{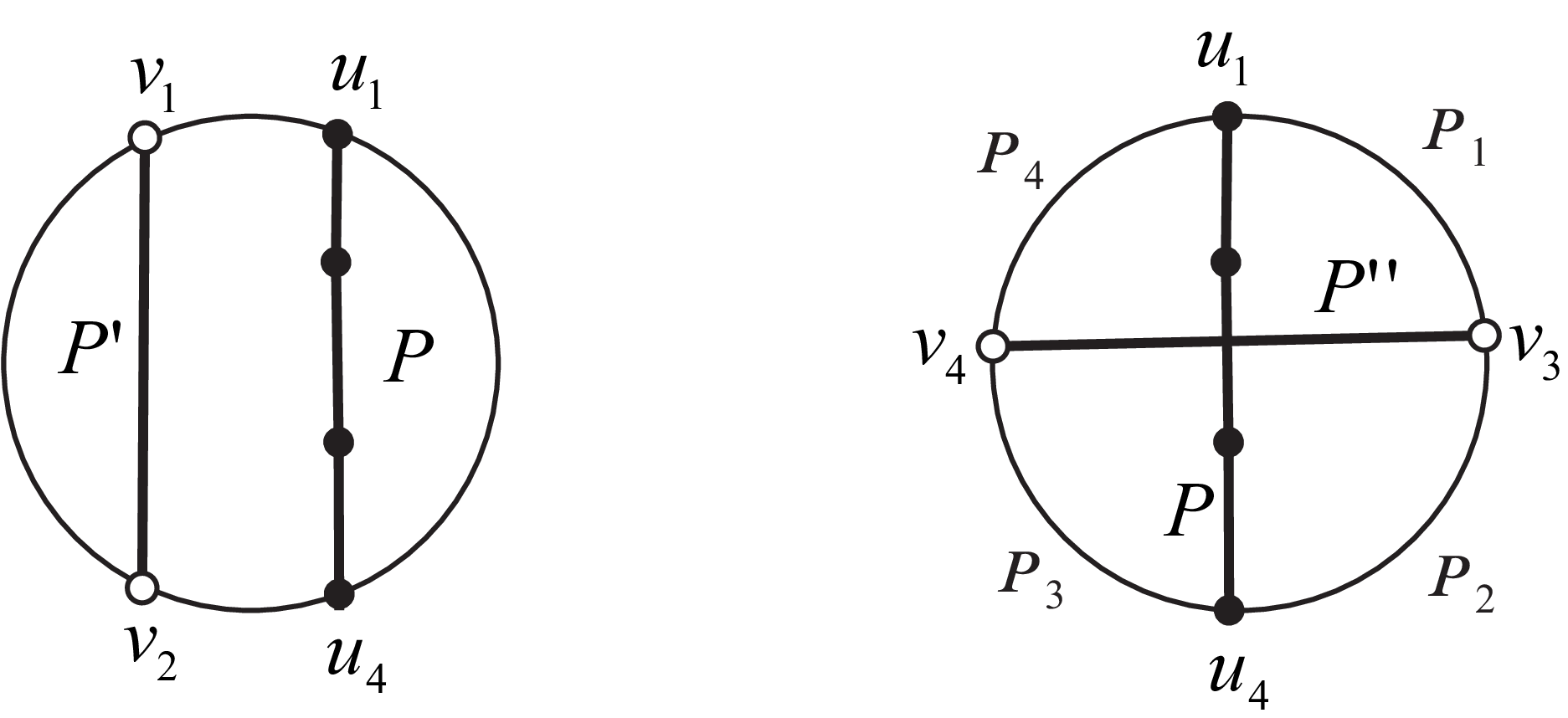}}
      \caption{\label{F08} Illustration for Case \ref{CRPS}.}
      \end{figure}
   \begin{Cas}
      \label{CRPS}
      Each ear in $R_k$ has end vertices with the same color.
   \end{Cas}

      Assume $P=u_1u_2u_3u_4$ and both $u_1$ and $u_4$ are black.
      Then $G$ has a perfect matching containing $\{u_1u_2,u_3u_4\}$.
      Let $P_u$ and $P'_u$ be the two paths from $u_1$ to $u_4$ along $C'$.
      Then their white vertices can not be completely matched by their black vertices.
      If $G$ contains a  path $P'$ joining two white vertices $v_1$ and $v_2$ on
      $P_u$ or $P'_u$,
      which is internally disjoint with $C'$ and $P$ (see Fig. \ref{F08}),
      then
      $C'+P+P'$ contains an odd conformal bicycle of $G$,
      a contradiction to Theorem \ref{EBV}.
      Otherwise, $G$ contains a path of odd length $P''$ joining one white vertex $v_3\in V(P_u)$
      and another white vertex $v_4\in V(P'_u)$,
      which is internally disjoint with $C'$ and $P$.
      %
      Then $C'$ is partitioned into four paths
      $P_1$, $P_2$, $P_3$ and $P_4$ in clockwise by end vertices of $P$ and $P''$.
      Assume that $u_1$, $v_3$, $u_4$ and $v_4$ appear on $C'$ in clockwise,
      and $P_1$ is a $u_1$-$v_3$ path.
      Since $G$ has no conformal minor in $\{D_1,D_2\}$,
      either $\{P_1,P_3\}$ or $\{P_2,P_4\}$ is an edge subset, say the former.
      Then $P+P_2+P''+P_4$ forms a Hamilton cycle of $G$ and we are done.
   \end{proof}

  \section{Proof of Lemma \ref{BNFG}}

 By Lemma \ref{Ham-2} $G$ has a  Hamilton cycle $C$. Also  we give a proper black-white coloring to the vertices of $C$. Since $G$ is nonbipartite,
   both end vertices of chords may have the same colors.
   Such chords are called \emph{monochromatic}, which are
    \emph{black} or \emph{white} according to color of the end vertices. Otherwise, a chord is
   \emph{bicolorable}.
   \begin{Lem}
   \label{SubgMC}
   Let $H$ be a subgraph of $G$ consisting of  $C$ together with some chords. If $H$   contains no monochromatic chords, or contains a pair of black and white chords, then $H$ is matching covered.
   \end{Lem}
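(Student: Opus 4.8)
The plan is to use the two perfect matchings of the even cycle $C$ to cover all cycle edges, and then to show that every chord of $H$ lies in a perfect matching, so that $H$ (which is connected and spanning because it contains the Hamilton cycle $C$) is matching covered. Fix the proper black-white coloring of $C$; since $G$ is matchable, $n:=|V(C)|$ is even and $C$ has exactly two perfect matchings $M_1,M_2$ whose union is $E(C)$. For a \emph{bicolorable} chord $f=v_av_b$ the indices $a,b$ have opposite parity, so deleting $v_a,v_b$ splits $C$ into two arcs each with an even number of vertices; matching each arc along $C$ and adding $f$ yields a perfect matching of $H$ through $f$. Thus bicolorable chords require no hypothesis. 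The role of the hypotheses is explained by a counting identity: in any perfect matching $M$ of $H$, each black (resp. white) chord covers two black (resp. white) vertices while every other edge covers one of each color, so equating the numbers of covered black and white vertices forces the number of black chords in $M$ to equal the number of white chords in $M$. Hence a monochromatic chord can lie in a perfect matching only if $H$ contains chords of both colors.

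If $H$ has no monochromatic chord, every chord is bicolorable and the argument above finishes the proof. So assume $H$ contains at least one black and at least one white chord. Fix a black chord $f=v_av_b$ (both endpoints black); it splits $C$ into two arcs $P$ and $Q$, and both $f+P$ and $f+Q$ are odd cycles because $v_a,v_b$ share a color. I claim that every white chord of $H$ \emph{crosses} $f$, i.e.\ has one endpoint in the interior of $P$ and the other in the interior of $Q$. Suppose not; then some white chord $g=v_cv_d$ lies strictly inside one arc, say $P$. Let $C_g$ be the odd cycle formed by $g$ together with the sub-arc of $P$ between $v_c$ and $v_d$, and let $C_Q$ be the odd cycle $f+Q$. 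These two odd cycles are vertex-disjoint, and the vertices of $H$ lying outside $C_g$ and $C_Q$ form two arcs of $C$ whose endpoints have opposite colors; hence each such arc has an even number of vertices and is matched along $C$. Therefore $(C_g,C_Q)$ is an odd conformal bicycle of $H$; since $V(H)=V(G)$ and $E(H)\subseteq E(G)$, it is also an odd conformal bicycle of $G$, contradicting Theorem \ref{EBV}. This proves the claim, and the symmetric argument shows that every black chord crosses every white chord.

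The construction of a perfect matching is then immediate. Given a black monochromatic chord $f$, the hypothesis provides a white chord, which by the claim crosses $f$; writing the four endpoints in their cyclic order $v_a,v_c,v_b,v_d$ and deleting them splits $C$ into four arcs, each bounded by vertices of opposite colors and hence of even order. Matching these four arcs along $C$ and adding $f$ together with the white chord produces a perfect matching of $H$ containing $f$. The same argument with the colors exchanged handles every white monochromatic chord. Together with the bicolorable case and the cycle edges, this shows every edge of $H$ lies in a perfect matching, so $H$ is matching covered.

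The main obstacle, and the point where the Birkhoff--von Neumann hypothesis is indispensable, is the crossing claim: it is exactly the absence of odd conformal bicycles (Theorem \ref{EBV}) that rules out a white chord nested inside an arc of a black chord. Once crossing is forced, all the parity bookkeeping for the arcs is routine; I would only take care to treat degenerate arcs (empty arcs arising when two deleted vertices are adjacent on $C$) as even of order zero, so that no separate small-case analysis is needed.
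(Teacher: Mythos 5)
Your proof is correct and follows essentially the same route as the paper's: in both, the crucial step is that Theorem \ref{EBV} (no odd conformal bicycles) forces every black chord to cross every white chord, after which deleting the four endpoints of a crossed black--white pair leaves arcs of $C$ of even order that are matched along the cycle, yielding a perfect matching through both chords. The only cosmetic differences are that you handle bicolorable chords by a direct arc-matching construction where the paper invokes the bipartite ear decomposition (Theorem \ref{ED}), and your parity-counting remark about equal numbers of black and white chords in any perfect matching is extra motivation not needed for the argument.
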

   \begin{proof}
   Note that each edge in $E(C)$ belongs to some perfect matching of $H$.
   If $H$ does not contain monochromatic chords, then $H$ has a bipartite ear decomposition from $C$, so $H$ is matching covered by Theorem \ref{ED}.
   If $H$ contains a pair of black and white chords, then they are crossed. Otherwise  $H$ and thus $G$ contain an odd conformal bicycle, contradicting  Theorem \ref{EBV}.
  Take a white chord $u_1u_2$ and a black chord $v_1v_2$ of $C$ in $H$. Since $u_1u_2$ and $v_1v_2$ are crossed, $C-u_1-u_2-v_1-v_2$ consists of some disjoint paths of odd length, and thus has a perfect matching. Hence $u_1u_2$ and $v_1v_2$ belong to some perfect matching of $H$. So $H$ is matching covered.
   \end{proof}
   We always use $G'$ (resp. $G''$) to denote the subgraph of $G$ consisting of $C$ together with all monochromatic chords (resp. bicolorable chords).
   Then $G'$ and $G''$ are spanning subgraphs of $G$ without conformal minor in $\mathcal{A}\cup \mathcal{D}$. By Lemma \ref{SubgMC}, $G''$ is  a matching covered  bipartite graph. So $G''\in \bigcup \limits_{i=0}^3 \mathcal{B}_i$ by Theorem \ref{BMT}.
   In the following, we distinguish four cases according to $G''\in  \mathcal{B}_i, i=0,1,2,3,$ to  show that, if $G''\in \mathcal{B}_i$ then $G\in \mathcal{G}_i$ for
   $i=0,1,2$, and if $G''\in \mathcal{B}_3$, then $G\in \mathcal{G}_0\cup \mathcal{G}_1\cup \mathcal{G}_3$.

   Since $G$ is nonbipartite and matching covered, $G'$ contains at least one  black  chord and at least one white chord of $C$.
   Let $n_b$ and $n_w$ be the numbers of black and white chords in $G'$ respectively.
   Then $n_b\geq 1$ and $n_w\geq 1$.

\begin{Lem}
   \label{SCN}
    $G'\in \mathcal{G}_0$, and $G'$ is matching covered.
\end{Lem}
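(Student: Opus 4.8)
The plan is to analyze the structure of $G'$, the spanning subgraph induced by $E(C)$ together with all monochromatic chords, and show it must be one of the fundamental graphs in $\mathcal{G}_0$ (Fig.~\ref{F09}) up to bisubdivision on a replaceable set. Since $G'$ contains only monochromatic chords but is still required to be matching covered, the black chords and white chords cannot be placed arbitrarily: the key constraint is that $G'$ avoids all conformal minors in $\mathcal{A}\cup\mathcal{D}$. First I would establish that $G'$ is matching covered. Because $G$ itself is matching covered and nonbipartite, every edge of $C$ lies in a perfect matching of $G'$; the delicate part is showing the monochromatic chords are also allowed. I expect this follows from a parity/alternating-path argument on $C$: a black chord $v_1v_2$ together with a suitably chosen white chord $u_1u_2$ must be \emph{crossed} (otherwise one gets an odd conformal bicycle, contradicting Theorem~\ref{EBV} via the BN-property), and a crossed black/white pair always extends to a perfect matching exactly as in the proof of Lemma~\ref{SubgMC}. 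Since $n_b\geq 1$ and $n_w\geq 1$, such a pair exists, so $G'$ is matching covered.

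Next I would pin down the positions and number of the chords. The excluded-minor hypothesis is the main lever. Using that $G'$ has no conformal minor $A_2$, no two chords can share an endpoint; using $A_3$, every pair of chords is crossed or parallel; and reusing the Claims~\ref{CP} and \ref{cla2} machinery (adapted to the monochromatic setting), one controls how black chords interact with white chords and with each other. Because $G'$ is nonbipartite and BN, the interaction between the black chords and the white chords is tightly restricted by Theorem~\ref{EBV}: any two monochromatic chords of opposite color must cross, and same-color chords must be mutually parallel or otherwise forbidden by the excluded $D_i$'s. The aim is to force $n_b$ and $n_w$ to be small and the cyclic arrangement of chord-endpoints to match exactly one of the finitely many patterns drawn in Fig.~\ref{F09}.

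I would then argue that after contracting each maximal chordless arc of $C$ (equivalently, reversing the bisubdivisions), the arrangement of chords collapses to one of the fundamental graphs $H_1,\dots,H_7$ of $\mathcal{G}_0$. Concretely, each residual path of $C$ between consecutive chord-endpoints has odd length, so $G'$ is exactly a bisubdivision of its underlying fundamental graph, and the places where bisubdivisions occur are precisely the replaceable (thin) edges, by Proposition~\ref{pro1} (any bisubdivision touching an irreplaceable set would create an excluded conformal minor). Finite case-checking over the admissible crossing/parallel patterns, against the full list $\mathcal{A}\cup\mathcal{D}$, shows no other configuration survives, so $G'\in\mathcal{G}_0$.

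The main obstacle I anticipate is the combinatorial case analysis on the cyclic placement of black and white chords: one must enumerate all ways of interleaving $n_b$ black and $n_w$ white crossing/parallel chords on $C$ and verify that every arrangement not appearing in Fig.~\ref{F09} yields a conformal minor in $\mathcal{A}\cup\mathcal{D}$ (or an odd conformal bicycle, hence a non-BN graph). Keeping this enumeration finite is exactly what the excluded minors $A_1,A_2,A_3$ (bounding chord crossings and shared endpoints) and the nonbipartite excluded minors $D_1,D_2,D_9$ (bounding monochromatic configurations) are designed to achieve, so the proof hinges on carefully matching each forbidden interleaving to a specific excluded graph.
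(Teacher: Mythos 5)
Your first step (matching covered) is fine and is essentially the paper's: since $n_b\geq 1$ and $n_w\geq 1$, Lemma \ref{SubgMC} applies directly, and the fact that every black/white pair of chords is crossed is already proved inside that lemma, so you are only re-deriving it. The structural part, however, has a genuine gap: you transplant the bipartite-case machinery of Theorem \ref{BMT} (no shared endpoints via $A_2$, every pair crossed or parallel via $A_3$, Claims \ref{CP} and \ref{cla2}) to monochromatic chords, and that is exactly where parity breaks the analogy. A bisubdivision must replace edges by \emph{odd}-length paths, so whether $C$ plus two chords contains a conformal bisubdivision of $A_2$ or $A_3$ depends on the colors of the chord endpoints; for same-color chords the arc parities are different and those exclusions simply fail. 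Indeed the truth is the opposite of what you assert: the paper shows that two same-color chords must be \emph{adjacent or crossed} --- a disjoint non-crossed (``parallel'') pair of white chords, each necessarily crossed by a black chord, yields three internally disjoint odd paths between two vertices, i.e.\ a conformal bisubdivision of $A_1$ --- and the fundamental graphs $H_2,\dots,H_7$ of $\mathcal{G}_0$ do contain chords sharing endpoints, which your ``no common endpoint'' claim would wrongly exclude.

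Second, you never actually bound the number of chords; ``force $n_b$ and $n_w$ to be small'' is the statement to be proved, not an argument, and the minors you name for this purpose ($D_1$, $D_2$, $D_9$) are not the relevant ones. The paper bounds $n_b\leq 2$ (and symmetrically $n_w\leq 2$) by a case analysis on three black chords: a common endpoint gives $D_4$, a three-chord path gives $D_5$, a three-chord matching gives $D_6$, an adjacent pair crossed by the third gives $D_7$, and the triangle configuration is impossible because no white chord can cross all three sides of a triangle; only then does it read off $G'$ as a bisubdivision of $H_1,\dots,H_7$ according to $n_b+n_w\in\{2,3,4\}$. Two further slips: your claim that each maximal chordless arc of $C$ has odd length is false (an arc between two endpoints of the same color has even length), and Proposition \ref{pro1} cannot be invoked here --- it concerns the graphs underlying $\mathcal{G}_1$, $\mathcal{G}_2$ and $\mathcal{G}_3$, not $H_1,\dots,H_7$; for $\mathcal{G}_0$ the bisubdivision lands on the replaceable (cycle) edges automatically, because the chords of $G'$ remain chords.
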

\begin{proof}
   As $G'$ has both black chord and white chords,
   $G'$ is matching covered by Lemma \ref{SubgMC}.
   We claim that any pair of white (or black) chords are either adjacent to or crossed with
   each other. Otherwise, suppose $C$ has a pair of disjoint white chords that are not crossed, which are crossed by a black chord. In this case $G'$ has a conformal subgraph consisting of three internally disjoint paths of odd length at least three between two vertices, which is a bisubdivision of $A_1$ (for example,  $F_1$ in Fig. \ref{PO} has three internally disjoint 3-length paths  from $v_2$ to $v_6$), a contradiction.

   \begin{figure}[H]
            \vspace{3pt}
            \centerline{\includegraphics[width=0.4\textwidth]{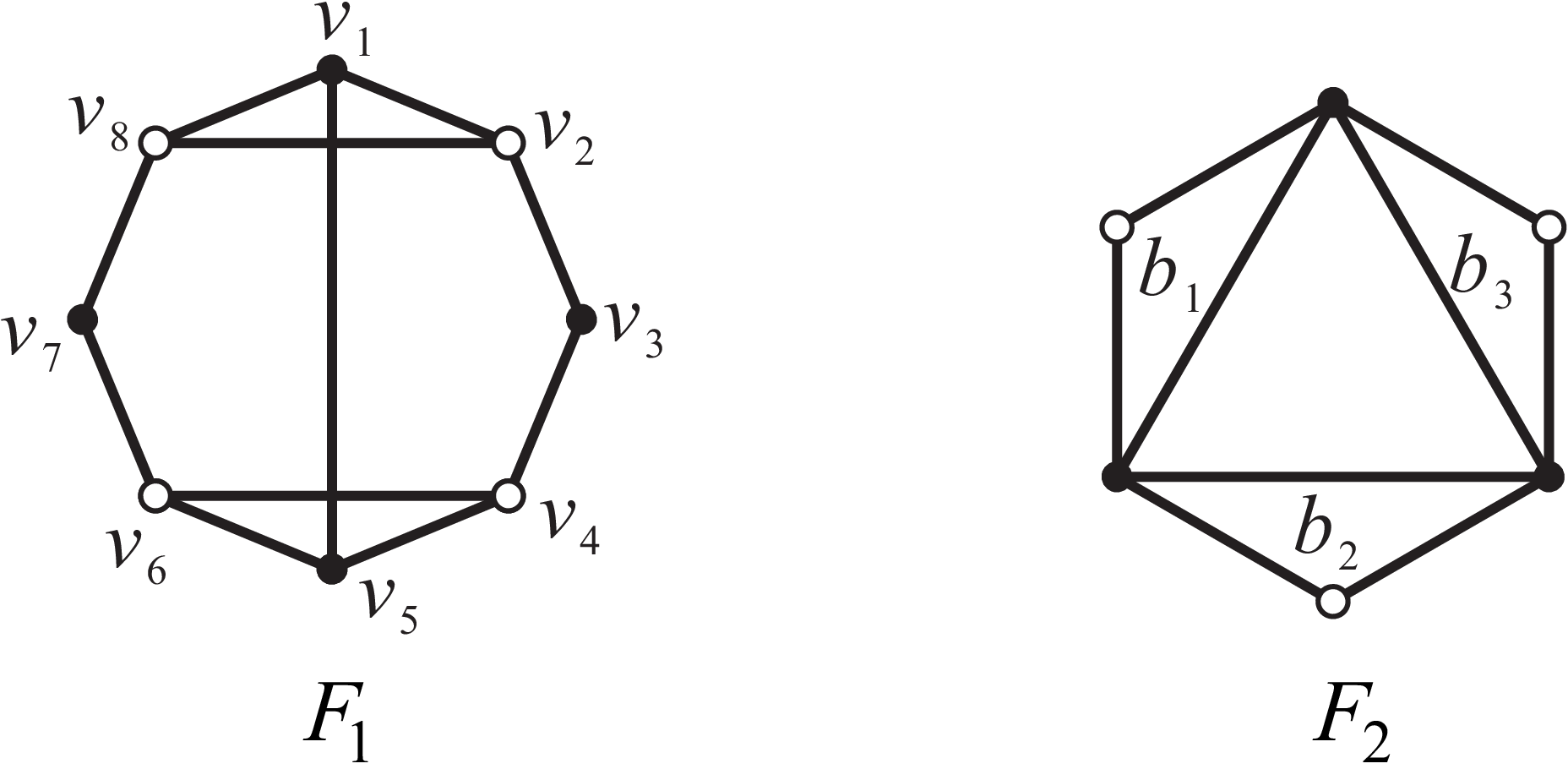}}
         \caption{\label{PO} Illustration for the proof of  Lemma \ref{SCN}.}
   \end{figure}

   Further we claim that $n_b\leq 2$ and $n_w\leq 2$. We only show the former. Suppose to the contrary that $C$ has three black chords $b_1,b_2$ and $b_3$. If the $b_i$'s have a common end-vertex, then $G'$ has a conformal minor  $D_4$, a contradiction.
   If the $b_i$'s form  a triangle  (see $F_2$ in Fig. \ref{PO}),
   it is impossible that a white chord  crosses each of black chord $b_1,b_2,b_3$.
   If the $b_i$'s form a 3-length path, then  $G'$ would have a conformal minor $D_5$, a contradiction.
   If the $b_i$'s form a matching, then $G'$ would have a conformal minor $D_6$, a contradiction. The remaining case is that among the $b_i$'s, there is  a pair of adjacent chords which is crossed with the third chord.
   In this case, $G'$ would have a conformal minor $D_7$, a contradiction.

   If $n_b+n_w=2$, then $n_b=n_w=1$ and $G'$ is a bisubdivision of $H_1$ on the thin edges.
   If $n_b+n_w=3$, then $G'$ is a  bisubdivision of $H_2$ or $H_3$.
   If $n_b+n_w=4$, then $n_b=n_w=2$ and $G'$ is a  bisubdivision of
   $H_4,H_5,H_6$ or $H_7$. In summary, $G'\in \mathcal{G}_0$.
%
   \end{proof}
\subsection{Case \texorpdfstring{$G''\in \mathcal{B}_0$}{G'' in B0}}

In this case, $G''=C$ and $G'=G$. By Lemma \ref{SCN}, we have $G\in \mathcal{G}_0$.

   \subsection{Case \texorpdfstring{$G''\in \mathcal{B}_1$}{G'' in B1}}
   \begin{Lem}
      \label{OBC}
      If  $G''\in \mathcal{B}_1$,
      then $G\in \mathcal{G}_1$.
   \end{Lem}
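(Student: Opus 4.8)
The plan is to mirror the structure already established for the bipartite case and for Lemma \ref{SCN}, now under the hypothesis that the bicolorable part $G''$ is a single Hamilton cycle plus exactly one chord. First I would fix the Hamilton cycle $C$ and its proper coloring, recall from Lemma \ref{SCN} that the monochromatic subgraph $G'$ lies in $\mathcal{G}_0$ and is matching covered, and then analyze how the single bicolorable chord $f$ of $G''$ interacts with the black and white chords of $G'$. Since $G''\in\mathcal B_1$ means $C$ has precisely one bicolorable chord, the whole graph $G$ is $G'$ together with $f$. The goal is to show the resulting configuration is a bisubdivision of one of the fundamental graphs $H_{i,j}$ defining $\mathcal{G}_1$ in Fig. \ref{F10}.

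The key steps, in order: (1) Record the possible shapes of $G'$ from Lemma \ref{SCN} — it is a bisubdivision of one of $H_1,\dots,H_7$, determined by $n_b,n_w\in\{1,2\}$ and the crossed/adjacent pattern of same-color chords. (2) For each such shape, insert the bicolorable chord $f=ab$ (with $a,b$ receiving different colors) and determine, up to symmetry, where its endpoints can sit relative to the monochromatic chords. Here I would repeatedly invoke the excluded conformal minors: any forbidden relative position of $f$ produces a conformal bisubdivision of some $A_i$ or $D_i$, exactly as in Claims \ref{CP}–\ref{cla2} and in the proof of Lemma \ref{SCN}. In particular, because $G$ is a BN-graph, Theorem \ref{EBV} forbids odd conformal bicycles, which should further restrict how $f$ can cross the monochromatic chords (a bicolorable chord together with two same-color chords can create a disallowed pair of disjoint odd cycles). (3) Check that each surviving configuration is, after contracting the subdivided (replaceable) paths back down, isomorphic to one of the graphs $H_{1,j}$ through $H_{6,j}$ that were declared to generate $\mathcal{G}_1$ via bisubdivisions on a replaceable set; conversely confirm that the subdivisions present in $G$ occur only along replaceable edges, using Proposition \ref{pro1} to rule out subdivisions on irreplaceable edges (which would reintroduce an excluded minor). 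Concluding that $G\in\mathcal G_1$.

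The main obstacle I expect is the case bookkeeping in step (2): with $G'$ ranging over seven base shapes and the added chord $f$ admitting several endpoint placements on each, the number of subcases is large, and for each one must exhibit the specific excluded minor ($A_1,A_3,D_1,D_2,D_9$, or an odd conformal bicycle) that eliminates the bad placements and then match the good placement to a named $H_{i,j}$. The delicate point is not any single deduction but ensuring the enumeration is exhaustive and that symmetries of $C$ are used correctly to collapse equivalent placements; I would organize this by first fixing the colors of $f$'s endpoints and the arcs of $C$ they separate, then treating the at-most-two black chords and at-most-two white chords as lying in prescribed arcs, so that each combinatorial type is pinned down by how $f$ interleaves with them along $C$. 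Throughout, Lemma \ref{SubgMC} guarantees the relevant induced subgraphs remain matching covered, so every forbidden configuration is genuinely conformal and the excluded-minor arguments apply.
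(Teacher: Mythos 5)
Your proposal is correct and follows essentially the same route as the paper: decompose $G$ as $G'$ plus the unique bicolorable chord, invoke Lemma \ref{SCN} to reduce $G'$ to a bisubdivision of one of $H_1,\dots,H_7$, and then eliminate bad chord placements by exhibiting excluded conformal minors and match the surviving ones to the fundamental graphs $H_{i,j}$ via Proposition \ref{pro1}. The only notable difference is one of economy: for the heavier cases ($H_4$ through $H_7$) the paper avoids your flat enumeration by deleting a monochromatic chord, using Lemma \ref{SubgMC} to stay in the class, and applying the already-settled subcases for $H_2$ and $H_3$ to pin down (or, for $H_7$, rule out) the position of the bicolorable chord.
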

   \begin{proof}
      Since $G''\in \mathcal{B}_1$, $C$ has a unique bicolorable chord, denoted by  $e$.
      Next we mainly proceed by  excluding  bisubdivisions of $D_8$ to $D_{24}$ as conformal subgraphs. By Lemma \ref{SCN}, $G'\in \mathcal{G}_0$. Then $G'$ can be obtained from $H_1,H_2,\ldots,$ or $ H_7$ by a bisubdivision on edges of the outer cycle, and Hamilton cycle $C$ can be obtained accordingly from the outer cycle. So $G$ is obtained from $G'$ by adding the bicolorable chord $e$. Next we consider such seven cases.

      {\bf Subcase 1.}  $G'$ is obtained from   $H_1$.
      If $e$ is not adjacent to any monochromatic chord,  then $G$ can be obtained from  $H_{1,1}$, $H_{1,2}$ or $H_{1,3}$ (via a bisubdivision on replaceable sets) by using Prop. \ref{pro1} and excluding conformal minor $D_{10}$.

      If  $e$ is adjacent to exactly one monochromatic chord, then
      $e$ must cross another monochromatic chord by the exclusion of $D_8$, so
      $G$ can be obtained from $H_{1,5}$.
      Otherwise, $e$ is adjacent to two monochromatic chords,
      and $G$ can be obtained from $H_{1,4}$.

      {\bf Subcase 2.}  $G'$ is obtained from   $H_2$.
      In this case, we can show that $G$ can be obtained from $H_{2,1}$  (via a bisubdivision on replaceable sets).   By excluding conformal minors  $D_8$ to $D_{15}$, we have  seven possibilities $F_3$ to $F_8$ or $H_{2,1}$ of adding $e$ to $G'$ to get $G$ (see Fig. \ref{OFS}). It suffices to exclude the first six possibilities.
      Since $F_3-v_2v_6$ and $F_4-v_2v_6$ can be obtained from $H_{1,5}$ by a
      bisubdivision on an irreplaceable edge  (see the path $v_5v_6v_7v_8$ in $F_3$ and
      the path $v_1v_2v_3v_4$ in $F_4$),
      $G$ cannot be obtained from $F_3$ or $F_4$ by Prop. \ref{pro1}.
      Similarly,  $F_5-v_2v_6$ and $F_6-v_2v_6$ can be obtained from $H_{1,2}$ by a
      bisubdivision on an irreplaceable edge (see the vertices $v_6$ and $v_7$ in $F_5$
      and  $v_2$ and $v_3$ in $F_6$),
      and $F_7-v_2v_6$ can be obtained from $H_{1,3}$ by a bisubdivision on an irreplaceable edge (see the path $v_1v_2v_3v_4$),
      $G$ cannot be obtained from $F_5,F_6$ or $F_7$.
      Since $F_8$ contains three internally
      disjoint paths of odd length from $v_4$ to $v_{10}$: $v_4v_5v_1v_{10}$, $v_4v_8v_9v_{10}$ and
      $v_4v_3v_2v_6v_7v_{10}$,
      which form a conformal minor $A_1$,
      $G$ cannot be obtained from $F_8$.

      \begin{figure}[ht]
            \vspace{3pt}
            \centerline{\includegraphics[width=0.95\textwidth]{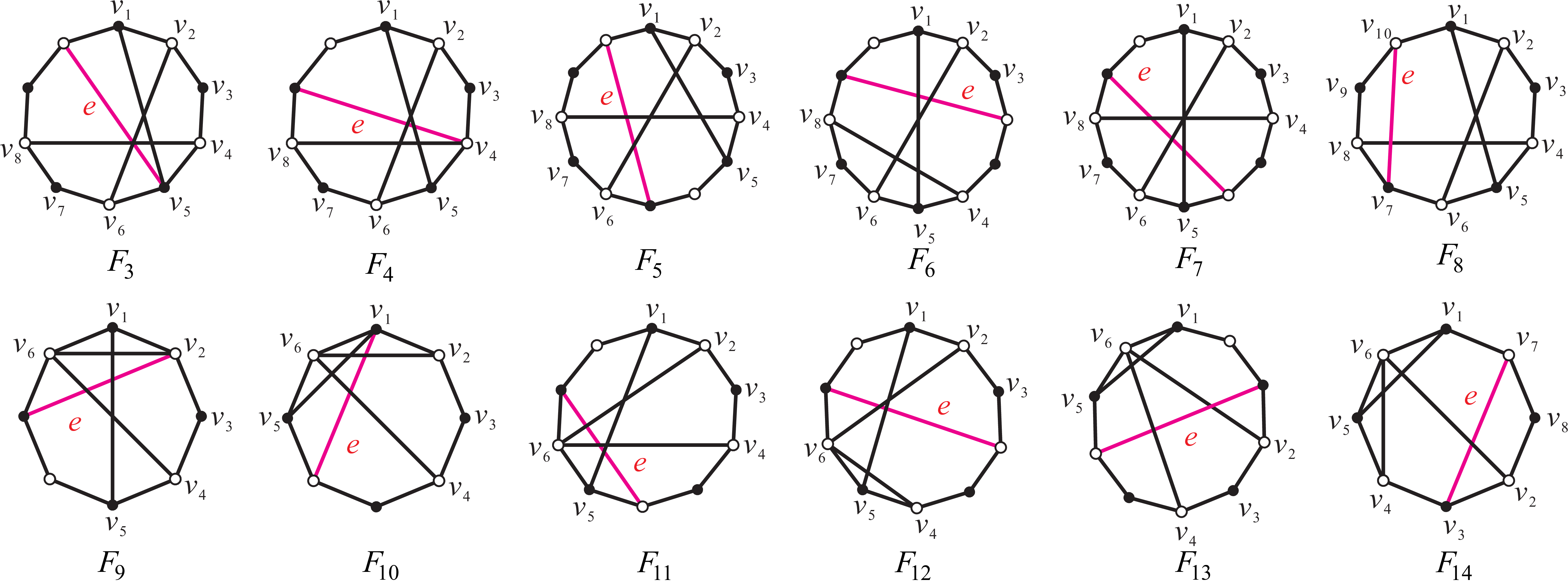}}
         \caption{\label{OFS} Illustration for the proof of Lemma \ref{OBC}.}
      \end{figure}

       {\bf Subcase 3.}  $G'$ is obtained from   $H_3$. 
      Similarly we can show that $G$ can be obtained from $H_{3,1}, H_{3,2}$ or $H_{3,3}$. Since $F_9-v_4v_6$ and $F_{10}-v_2v_6$ can be obtained from
      $H_{1,5}$ by performing bisubdivisions on its irreplaceable edges (see the path
      $v_2v_3v_4v_5$ in $F_9$ and the path $v_1v_2v_3v_4$ in $F_{10}$),
      $G$ has no conformal minor in $\{F_9,F_{10}\}$.
      From the exclusions of $D_8$ and $D_{16}$ to $D_{19}$,
      we can deduce that $e$ is adjacent to zero or two monochromatic chords.
      For the latter, $e$ must cross a monochromatic chord by excluding $D_8$ or $D_9$, so
      $G$ can be obtained from  $H_{3,1}$.
      For the former, $G$ cannot be obtained from   $F_{11}$ to $F_{13}$,
      since $F_{11}-v_2v_6$ can be obtained from $H_{1,3}$ by a bisubdivision on
       irreplaceable edge $v_5v_6$
      (corresponding to the path $v_1v_2v_3v_4$ in $F_{11}$), and
      $F_{12}-v_2v_6$ and $F_{13}-v_2v_6$ can be obtained from $H_{1,2}$ by a bisubdivision on
      irreplaceable edge $v_1v_2$ (see the vertices $v_2$ and $v_3$ in $F_{12}$ and $F_{13}$).
      By the exclusions of $D_{22}$ to $D_{24}$,
      $e$ crosses at most one monochromatic chord.
      Since $F_{14}$ contains three internally disjoint paths of odd length
      from $v_6$ to $v_7$: $v_6v_2v_8v_7$, $v_6v_4v_3v_7$ and $v_6v_5v_1v_7$,
      which form a conformal minor $A_1$,
      $G$ cannot be obtained from $F_{14}$.
      Once by the exclusions of $D_{10}$ and $D_{20}$ to $D_{21}$,
      $G$ can be obtained from $H_{3,2}$ or $H_{3,3}$.

       {\bf Subcase 4.}  $G'$ is obtained from $H_4$.
      Let $e_i$, $1\leq i\leq 4$, be the monochromatic chords of $C$. Since $G-e_i$ is also a matching covered BN-graph without conformal minors in $\mathcal A\cup \mathcal D$ by Lemma \ref{SubgMC}, $G-e_i$ can be obtained from  $H_{3,1}, H_{3,2}$ or $H_{3,3}$ by Subcase 3. We claim that $e$ is not adjacent to any $e_i$. If $e$ is adjacent to $e_1$, choose  $e_2$ and $e_3$ that are not adjacent to $e$. Then $G-e_4$ cannot be obtained from $H_{3,j}$, $j=1,2,3$, a contradiction.  Similarly we can show that $e$  does not cross any $e_i$. In this way we can deduce that
      $G$ can be obtained from $H_{4,j}$ for $j=1,2,3$.

       {\bf Subcase 5.}  $G'$ is obtained from   $H_5$ or $H_6$.  Take a monochromatic chord $e'$ so that the other monochromatic chords are disjoint. Similarly to Subcase 4,  $G-e'$  can be obtained from  $H_{2,1}$ by Subcase 2.  By possible position of $e'$ that does not  cross $e$ and the exclusion of $D_{10}$,  $G$
 can be obtained from  $H_{5,1}$, $H_{5,2}$ or $H_{6,1}$.

       {\bf Subcase 6.}  $G'$ is obtained from   $H_7$.
  Similar to Subcase 4,  the deletion of each monochromatic chord from $G$ results in a bisubdivision of $H_{2,1}$ from Subcase 2, which implies that $e$ does not cross any monochromatic chord. So $G$ contains a conformal minor  $D_{10}$,  a contradiction.

      In summary, $G$ can be obtained from  $H_{1,1}$ to $H_{6,1}$, so $G\in \mathcal{G}_1$.
   \end{proof}
   \subsection{Case \texorpdfstring{$G''\in \mathcal{B}_2$}{G'' in B2}}
   \begin{Lem}
      \label{G1B3}
      If  $G''\in \mathcal{B}_2$,
      then $G\in \mathcal{G}_2$.
   \end{Lem}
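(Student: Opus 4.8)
The plan is to reduce to the already-settled case $G''\in\mathcal{B}_1$ by deleting one of the two crossing chords. Since $G''\in\mathcal{B}_2$, the Hamilton cycle $C$ carries exactly two bicolorable chords $e_1=x_1x_2$ and $e_2=y_1y_2$, and they are strongly crossed, so their endpoints occur as $x_1,y_1,x_2,y_2$ along $C$ with $x_1y_2,x_2y_1\in E(C)$. First I would verify that each $G-e_i$ is again a nonbipartite matching covered BN-graph with no conformal minor in $\mathcal{A}\cup\mathcal{D}$: it keeps all monochromatic chords of $G$ (so it stays nonbipartite and, possessing both a black and a white chord, is matching covered by Lemma \ref{SubgMC}); it is a subgraph of $G$ (so it inherits the BN property via Theorem \ref{EBV} and the absence of forbidden minors); and its only bicolorable chord is the surviving one, whence $(G-e_i)''\in\mathcal{B}_1$. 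Lemma \ref{OBC} then gives $G-e_1,\,G-e_2\in\mathcal{G}_1$.

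Next I would pin down the common skeleton shared by these two reductions. By Lemma \ref{SCN} the monochromatic part $G'$ lies in $\mathcal{G}_0$, hence is a bisubdivision of some $H_i$ with $1\le i\le 7$, and this same $H_i$ underlies both $G-e_1$ and $G-e_2$. Reading off the subcases of Lemma \ref{OBC}, the base $H_i$ dictates which fundamental $\mathcal{G}_1$-graph a single added bicolorable chord can produce: $H_7$ is impossible, and for each of $H_2,H_4,H_5,H_6$ the admissible position of one bicolorable chord is so restricted that a second chord strongly crossing it is forced to create one of the excluded minors $A_1$ or $D_8$ to $D_{24}$ (the very exclusions used in Lemma \ref{OBC}, now applied to the crossing pair). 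This should leave only the bases $H_1$ and $H_3$.

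For base $H_1$ (one black and one white monochromatic chord), the strongly crossed pair must sit so that deleting $e_2$ realizes $H_{1,4}$ (its surviving chord adjacent to both monochromatic chords) and deleting $e_1$ realizes $H_{1,5}$ (its surviving chord adjacent to one monochromatic chord and crossing the other); the only placement making both hold while $e_1,e_2$ remain strongly crossed is the configuration of $H_{1,4,5}$. For base $H_3$, each deletion must give $H_{3,1}$, and the two strongly crossed chords then occupy precisely the slots of $H_{3,1,1}$. In either case Propositions \ref{pro1} and \ref{pro2} confirm that the bisubdivisions producing $G'$ from $G$ act only on replaceable sets, so $G$ is obtained from $H_{1,4,5}$ or $H_{3,1,1}$ by bisubdivisions on a replaceable set, i.e. $G\in\mathcal{G}_2$.

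The main obstacle is the joint case analysis in the middle step. Unlike Lemma \ref{OBC}, where a single chord was placed, here I must check that the two candidate chords are mutually compatible under the strongly-crossed constraint while each deletion independently lands in $\mathcal{G}_1$. Concretely, the work is to enumerate, for the bases $H_2,H_4,H_5,H_6$, every way of attaching a second strongly crossing bicolorable chord to an admissible first one and exhibit a forbidden conformal minor, and then to confirm for $H_1,H_3$ that the forced positions are exactly those of $H_{1,4,5}$ and $H_{3,1,1}$. Tracking which endpoints of $e_1,e_2$ fall on which arcs of $C$, as fixed by the strongly-crossed adjacencies $x_1y_2,x_2y_1\in E(C)$, relative to the monochromatic chords is the delicate bookkeeping that drives this step.
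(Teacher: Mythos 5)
Your overall reduction is the same as the paper's: delete one of the two strongly crossed bicolorable chords, observe via Lemma \ref{SubgMC} that $G-e_i$ remains a matching covered BN-graph with no conformal minor in $\mathcal{A}\cup\mathcal{D}$, apply Lemma \ref{OBC} to get $G-e_i\in\mathcal{G}_1$, and then decide which pairs of $\mathcal{G}_1$-realizations are compatible with a strongly crossed pair. However, the heart of the lemma is exactly that compatibility analysis, and you do not carry it out: for base $H_1$ you simply assert that the pair must realize $H_{1,4}$ and $H_{1,5}$ (yielding $H_{1,4,5}$), and for base $H_3$ that both deletions must give $H_{3,1}$ (yielding $H_{3,1,1}$), explicitly deferring the verification as ``the work to be done.'' The cases you skip are precisely where the content lies: both deletions giving $H_{1,2}$; a deletion giving $H_{1,3}$; both deletions giving $H_{1,5}$; and the possibilities $H_{3,2}$, $H_{3,3}$ for base $H_3$.

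More importantly, the toolkit you propose for completing the analysis --- exhibiting an excluded conformal minor among $A_1$ or $D_8$ to $D_{24}$, ``the very exclusions used in Lemma \ref{OBC}'' --- is insufficient for several of these cases, so following your plan as stated would stall. In the paper's proof, when both deletions give $H_{1,2}$ the contradiction comes from $D_{25}$ (which your list omits) or from $D_8$ only after deleting a further edge of the configuration $F_{15}$; and when a deletion gives $H_{1,3}$ (configurations $F_{16}$, $F_{17}$) or both give $H_{1,5}$ (configuration $F_{18}$), no member of $\mathcal{A}\cup\mathcal{D}$ appears at all --- instead these graphs contain odd conformal bicycles, contradicting the hypothesis that $G$ is a BN-graph via Theorem \ref{EBV}. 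So the BN property itself is a substantive exclusion tool in this lemma, not just minor exclusion, and your proposal never invokes it. Until those configurations are eliminated by the correct means, the claim that only $H_{1,4,5}$ and $H_{3,1,1}$ survive is unproven.
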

   \begin{proof}
      Let $e_1$ and $e_2$ be the two strongly crossed bicolorable chords of $C$ in $G$. Then $G-e_i$ is still a matching covered BN-graph by Lemma \ref{SubgMC}. Since $G-e_i$, $i=1,2$, also has no conformal minors in $\mathcal A\cup \mathcal D$, by Lemma \ref{OBC}, $G-e_i\in \mathcal G_1$. Then $G$ can be obtained from a graph $Q\in \mathcal{G}_1$ by adding a bicolorable chord.
      Since the exclusions of $D_8$ and $D_{10}$, $Q$ can not be obtained from $H_{1,1}$. Similarly, $Q$ can be obtained from $H_{1,j}, 2\leq j\leq 5$, or  $H_{3,1}$.

       If $Q$ is obtained from $H_{1,2}$, then $G$ is formed by combining $H_{1,2}$ and  $H_{1,2}$ or $H_{1,4}$,
         since  $e_1$ and $e_2$ are strongly crossed bicolorable chords of $C$.
         For the former $G$ can be obtained from  $D_{25}$, a contradiction. For the latter  $G$ can be obtained from $F_{15}$, and the deletion of $v_6v_7$ results in a bisubdivision  of $D_8$  (see Fig. \ref{F11}), a contradiction.

         \begin{figure}[H]
               \vspace{3pt}
            \centerline{\includegraphics[width=0.8\textwidth]{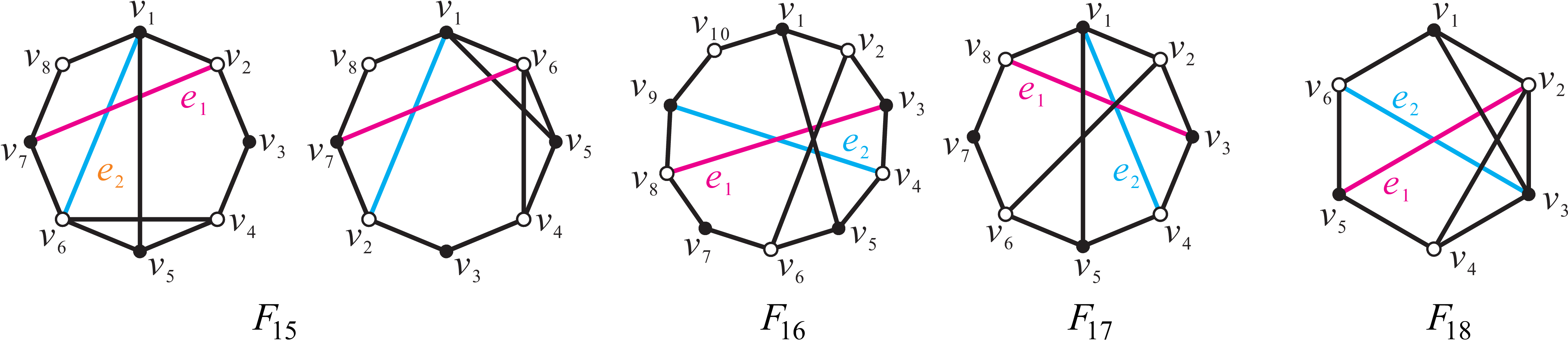}}
            \caption{\label{F11} Illustration for the proof of Lemma \ref{G1B3}.}
         \end{figure}

         If $Q$ is obtained from $H_{1,3}$, then similarly
         $G$ can be obtained from either $F_{16}$ or $F_{17}$,
         which contains  an odd conformal bicycle either ($v_2v_3v_8v_7v_6v_2$,
         $v_4v_5v_1v_{10}v_9v_4$)  or $(v_1v_4v_5v_1, v_2v_3v_8v_7v_6v_2)$ (see Fig. \ref{F11}), a contradiction to Theorem \ref{EBV}.

      If $Q$ is obtained from $H_{1,4}$, then $G$ can be obtained by  combining $H_{1,4}$  with $H_{1,5}$, which produces the graph $H_{1,4,5}$ (see Fig. \ref{F13}).
      If $G-e_i, i=1,2,$ is always obtained from   $H_{1,5}$, then $G$ can be obtained from $F_{18}$ (see Fig. \ref{F11}), which contains an odd conformal bicycle $(v_1v_3v_6v_1, v_2v_4v_5v_2)$, a contradiction.
      If $G'$ can be obtained from $H_3$, then
      $G-e_i, i=1,2,$ is always   a bisubdivision of $H_{3,1}$,
      and $G$ can be obtained from $H_{3,1,1}$ (see Fig. \ref{F13}).
      According to the definitions of replaceable sets of $H_{1,4,5}$ and $H_{3,1,1}$ and
      Prop. \ref{pro1},  $G\in \mathcal{G}_2$.
   \end{proof}
   \subsection{Case \texorpdfstring{$G''\in \mathcal{B}_3$}{G'' in B3}}
   \begin{Lem}
      \label{CS}
      If  $G''\in \mathcal{B}_3$,
      then $G\in \mathcal{G}_0\cup \mathcal{G}_1\cup \mathcal{G}_3$.
   \end{Lem}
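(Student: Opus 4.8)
The plan is to mirror the structure of the three preceding case lemmas (Lemmas \ref{OBC}, \ref{G1B3}) but now with the harder combinatorial configuration $G''\in \mathcal{B}_3$, where $C$ carries several pairwise \emph{parallel} bicolorable chords. First I would record that, by Theorem \ref{BMT}, $G''$ is a matching covered bipartite graph whose chords $f_1,\dots,f_t$ (the bicolorable ones) are pairwise parallel, while by Lemma \ref{SCN} the monochromatic part $G'$ already lies in $\mathcal{G}_0$, so $G'$ is a bisubdivision of one of $H_1,\dots,H_7$. The overall strategy is: for each bicolorable chord $f_j$, the graph $G-f_j$ has one fewer parallel chord and still excludes $\mathcal{A}\cup\mathcal{D}$, so by induction on $t$ (with the base cases $t=0$ and $t=1$ handled by Lemmas \ref{SCN} and \ref{OBC} giving $G\in\mathcal{G}_0\cup\mathcal{G}_1$) each $G-f_j\in\mathcal{G}_0\cup\mathcal{G}_1\cup\mathcal{G}_3$. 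I would then reconstruct $G$ by reinserting the parallel chords and show the only surviving configurations are quadrilateral subdivisions of the $H_{i,1}$ graphs, landing in $\mathcal{G}_3$ (or collapsing back into $\mathcal{G}_0\cup\mathcal{G}_1$).

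The key computational step is to understand how a parallel bicolorable chord interacts with the monochromatic black/white chords of $G'$ and with the other parallel chords. I would argue that two parallel bicolorable chords, together with the monochromatic chords forcing the BN-condition, generate a nested ladder of conformal squares; the natural object here is precisely the quadrilateral-subdivision pattern of Section \ref{QS}. Concretely, I expect to show that if $f_1$ and $f_2$ are parallel and their four endpoints are \emph{not} arranged to form a conformal square sharing edges with $C$, then the configuration contains three internally disjoint odd paths between two vertices, hence a bisubdivision of $A_1$, a contradiction; and if three parallel chords appear in an ``ungraded'' pattern one recovers a conformal $D_{10}$ or $A_3$ via Propositions \ref{pro1} and \ref{pro2}. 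This pins down the admissible chord configurations to exactly the strong-replaceable-set patterns of the fundamental graphs $H_{1,1},\dots,H_{6,1}$, which by definition quadrilateral-subdivide into members of $\mathcal{G}_3$.

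After the structural reduction I would match the surviving cases against the fundamental graphs one by one, exactly as in the proof of Lemma \ref{OBC}: for each candidate placement of the parallel chords relative to the black/white chords, either exhibit an excluded conformal minor in $\mathcal{A}\cup\mathcal{D}$ (using the new drawings and the path-exhibiting technique already used for $F_3,\dots,F_{18}$) or identify the graph as a bisubdivision/quadrilateral subdivision of one of $H_{1,1}$ through $H_{6,1}$, hence in $\mathcal{G}_3$. The degenerate subcases where no bicolorable chord is genuinely present, or only one survives, fall into $\mathcal{G}_0$ or $\mathcal{G}_1$ respectively, accounting for the full union $\mathcal{G}_0\cup\mathcal{G}_1\cup\mathcal{G}_3$ in the statement.

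The main obstacle I anticipate is the bookkeeping of \emph{how many} parallel chords can coexist and in which relative positions. Unlike the $\mathcal{B}_2$ case (Lemma \ref{G1B3}), where there were exactly two strongly crossed chords, here the number $t$ of parallel chords is a priori unbounded, so the induction must be set up carefully and each reinsertion step must verify that the growing ladder never escapes the strong-replaceable-set framework without triggering an excluded minor. The hard part will be proving the crucial claim that any admissible parallel-chord ladder is \emph{exactly} a quadrilateral subdivision on a strong replaceable set — i.e.\ that the nested squares align edge-to-edge with $C$ — rather than some skewed variant; this is where I expect to invest the most case analysis, leaning on Theorem \ref{EBV} to rule out odd conformal bicycles and on the $A_1$/$D_{10}$ exclusions to rule out misaligned rungs.
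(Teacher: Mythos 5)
Your high-level frame (reduce to $G'\in\mathcal{G}_0$ via Lemma \ref{SCN}, then use excluded minors to force the pairwise parallel bicolorable chords into quadrilateral-subdivision patterns) matches the spirit of the paper, but your central structural claim is false as stated, and it fails exactly where the paper has to do something you did not anticipate. You claim the admissible configurations are \emph{exactly} quadrilateral subdivisions on strong replaceable sets of $H_{1,1},\dots,H_{6,1}$, everything else collapsing into $\mathcal{G}_0\cup\mathcal{G}_1$. The graphs $H_{3,4}$ and $H_{4,5}$ (Fig. \ref{F12}) are direct counterexamples: each has two \emph{parallel} bicolorable chords meeting monochromatic chords, neither chord bounds a square with edges of $C$, yet both graphs exclude $\mathcal{A}\cup\mathcal{D}$ and lie in $\mathcal{G}_3$ as separate fundamental graphs (Prop. \ref{pro2} even notes they have no strong replaceable sets at all). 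Moreover, in the regime where two parallel bicolorable chords $e,e'$ each cross or touch a monochromatic chord, the paper's Claims 2 and 3 show that $G'+e+e'$ can be a bisubdivision of $H_4$, $H_5$, $H_6$ or $H_7$ --- that is, $e$ and $e'$ become \emph{monochromatic} chords of a \emph{new} Hamilton cycle $C^*$, and the remaining parallel chords must then be attached relative to $C^*$ using Lemma \ref{OBC} and quadrilateral subdivisions, yielding $H_{4,1}$ to $H_{6,1}$. Your ladder picture is anchored to the original cycle $C$ and cannot see this re-identification; relatedly, your induction-with-reinsertion stalls here, because the inductive conclusion $G-f_j\in\mathcal{G}_0\cup\mathcal{G}_1\cup\mathcal{G}_3$ records membership via some subdivision structure that need not be compatible with $C$, so the positional information needed to reinsert $f_j$ is lost.

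Two further concrete errors. First, your $A_1$-obstruction for two ``misaligned'' parallel chords is wrong: $C$ together with any set of pairwise parallel bicolorable chords is bipartite and lies in $\mathcal{B}_3$, hence has no conformal minor in $\mathcal{A}$ by Theorem \ref{BMT}; the genuine obstructions necessarily involve the monochromatic chords and are of type $A_3$, $D_8$, $D_{10}$ (Props. \ref{pro1} and \ref{pro2}) or odd conformal bicycles via Theorem \ref{EBV} --- and, as $H_{3,4}$ shows, even then misaligned pairs are not always excluded. Second, the paper does not induct on the total number $t$ of bicolorable chords; it splits on $n_c$, the number of bicolorable chords crossed with or adjacent to monochromatic chords, settling $n_c\le 1$ wholesale by Props. \ref{pro1} and \ref{pro2} and reducing $n_c\ge 2$ to the analysis of just two such chords followed by the $C^*$ re-Hamiltonization. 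That split is what keeps the case analysis finite; your scheme would need a reinsertion lemma for every member of $\mathcal{G}_0\cup\mathcal{G}_1\cup\mathcal{G}_3$, a much larger analysis that, without the $C^*$ trick and the sporadic cases $H_{3,4},H_{4,5}$, cannot be closed.
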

   \begin{proof}
      By Lemma \ref{SubgMC}, the deletion of bicolorable chords from $G$ results in a matching covered BN-graph without conformal minor in $\mathcal A\cup \mathcal D$.
      Since $G''\in \mathcal{B}_3$,
      $G$ can be obtained from a graph in $\mathcal{G}_1$ by adding some bicolorable chords so that all  bicolorable chords are  parallel to each other.
      Let $n_c$ be the number of  bicolorable chords which are crossed with or adjacent to a monochromatic chord.
      If $n_c\leq 1$, then $G$ can be obtained from a graph ($H_{1,1}$ to $H_{6,1}$, except $H_{1,2}$) shown in Fig. \ref{F10}
      by performing bisubdivisions on its replaceable set and
      quadrilateral subdivisions on its strong replaceable set by Props. \ref{pro1} and \ref{pro2}. So $G\in \mathcal G_3$.

      Next we  assume that $n_c\geq 2$.
      Let $e$ and $e'$ be such two  bicolorable chords  crossed with or adjacent to a monochromatic chord.
      Similarly we can see that  both $G'+e'$ and $G'+e$ can be obtained from $H_{3,1}$ or $H_{1,j}, 2\leq j\leq 5$.
      Note that $H_{3,1}$ is obtained from $H_{1,4}$ by adding a monochromatic chord. Thus we know that both $G'+e'$ and $G'+e$ can be obtained from $H_{1,j}, 2\leq j\leq 5$.\\[7pt]
      \noindent\textbf{Claim 1.}
         Neither $G'+e'$ nor $G'+e$ can be obtained from  $H_{1,2}$.
      \begin{proof}
         Suppose $G'+e$ is obtained from $H_{1,2}$. By considering all irreplaceable edges $v_1v_2$, $v_4v_5$ and $v_6v_7$ of $H_{1,2}$,  we can deduce that $G'+e'$ can be obtained from $H_{1,3}$, $H_{1,4}$ or $H_{1,5}$.

         If $e'$  crosses two monochromatic chords of $H_{1,2}$,
         then both end vertices of $e'$ belong to two paths, corresponding to  $v_2v_3v_4$ and $v_5v_6$ in $H_{1,2}$, respectively.
         By considering the irreplaceable edge $v_5v_6$ of $H_{1,3}$ and parallel chords $e$ and $e'$,  $G'+e'\notin \mathcal{G}_1$, a contradiction.
         If $e'$ is adjacent to two monochromatic chords of $H_{1,2}$, then it must join $v_5$ and $v_6$ since $e$ and $e'$ are parallel and  $v_4v_5$ is an irreplaceable edge of $H_{1,2}$.
         By considering the irreplaceable edge $v_2v_3$ of $H_{1,4}$,
         $G'+e\notin \mathcal{G}_1$, a contradiction.
         If $e'$ is adjacent to a monochromatic chord and crossed with a monochromatic chord of $H_{1,2}$, then $e$ joins $v_6$ and a black vertex on the path corresponding to $v_2v_3v_4$, since $e$ and $e'$ are parallel and $v_6v_7$ is an irreplaceable edge of $H_{1,2}$. In this case, $H_{1,2}+e'$ contain three internally disjoint 3-length paths from  $v_1$ to $v_6$: $v_1v_2v_3v_6$, $v_1v_5v_4v_6$ and $v_1v_8v_7v_6$, which form a conformal minor $A_1$, a contradiction.
      \end{proof}

      \noindent\textbf{Claim 2.}
        If $G'+e$ is obtained from $H_{1,3}$,
         then $G'+e'+e$ can be obtained from $H_5$, $H_6$, $H_7$ or $H_{3,4}$.
      \begin{proof}
         If $e'$ crosses two monochromatic chords of $H_{1,3}$,
         then $G'+e'+e$ can be obtained from $H_{3,3,0}$ or $H_{3,3,1}$ , which is isomorphic to $H_7$ or $H_6$ (see the left two columns of Fig. \ref{F12c}).
         If $e'$ is adjacent to one monochromatic chord and crossed with a monochromatic chord of $H_{1,3}$, then $e'$ is incident with either $v_1$ or $v_2$ in $H_{1,3}$ by    $G''\in \mathcal{B}_3$, say $v_1$.
         Then the other end vertex of $e'$ belongs to the path corresponding to the edge $v_2v_3$ of $H_{1,3}$. Further $G'+e'+e$ can be obtained from $H_{3,5,0}$, which is isomorphic to $H_5$.
         If $e'$ is adjacent to two monochromatic chords of $H_{1,3}$, then it must join $v_1$ and $v_2$, since $v_5v_6$ is an irreplaceable edge of $H_{1,3}$. It implies that $G'+e'+e$ can be obtained from $H_{3,4}$ (see Fig. \ref{F12}).
      \end{proof}

   \begin{figure}[H]
         \centerline{\includegraphics[width=0.75\textwidth]{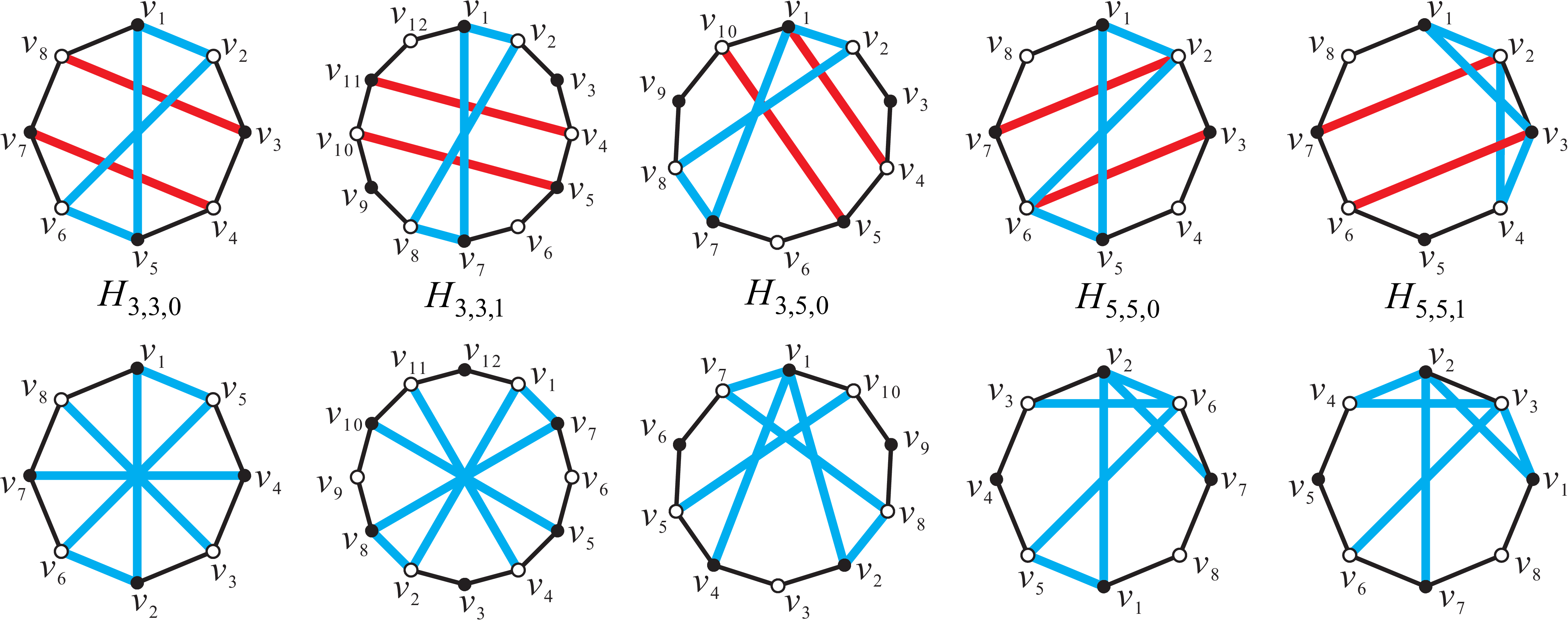}}
      \caption{\label{F12c} Illustration for Claims 2 and 3.} 
   \end{figure}

     \noindent\textbf{Claim 3.}
         If  neither $G'+e'$ nor $G'+e$ is obtained from $H_{1,3}$, then $G'+e'+e$ can be obtained from $H_4$ or $H_{4,5}$.

      \begin{proof}
         If $G'+e$ is obtained from $H_{1,4}$,
         then $G'+e'$ can not be obtained from $H_{1,4}$ since
         $v_2v_3$ is  irreplaceable  and $e$ and $e'$ are parallel to each other.
         By Claim 1,
         $e'$ is adjacent to one monochromatic chord and crossed with another monochromatic chord of $H_{1,4}$.
         So $e'$ is incident with either $v_2$ or $v_3$, say $v_3$, and with a vertex in the path corresponding to the edge $v_1v_2$ in $H_{1,4}$, which implies that $G'+e+e'$ can be obtained from  $H_{4,5}$ (see Fig. \ref{F12}).

         If both $G'+e$ and $G'+e'$ are obtained from  $H_{1,5}$,  then $e'$ is adjacent to one monochromatic chord and crossed with a monochromatic chord of $H_{1,5}$.
         So $e'$ is incident with either $v_3$ or $v_4$ in $H_{1,5}$.
         If $v_3\in V(e')$, then the other end vertex of $e'$ belongs to the path corresponding to the edge $v_4v_5$ in $H_{1,5}$,  which produces $H_{5,5,1}$ (see Fig. \ref{F12c}).
         If $v_4\in V(e')$, then the other end vertex of $e'$ belongs to the path corresponding to the edge $v_2v_3$ in $H_{1,5}$, which produces $H_{5,5,0}$.
         Since both $H_{5,5,0}$ and $H_{5,5,1}$ are isomorphic to $H_4$, $G'+e'+e$ can be obtained from $H_4$.
      \end{proof}

      By Claims 1, 2 and 3,
      $G'+e'+e$ can be obtained from $H_4$, $H_5$, $H_6$, $H_7$, $H_{3,4}$ or $H_{4,5}$.
      If  $G'+e'+e$ can be obtained from $H_{3,4}$ or $H_{4,5}$, then $G=G'+e+e'\in \mathcal G_3$ since we can not add any bicolorable chord to $H_{3,4}$ or $H_{4,5}$ by considering their irreplaceable edges.
      So we may suppose that  $G'+e'+e$ is obtained from  $H_4$, $H_5$, $H_6$ or $H_7$. In this case $G^*:=G'+e+e'$ has a new Hamilton cycle $C^*$ with only monochromatic chords. If $G=G^*$, then $G\in \mathcal G_0$. Otherwise,  by Lemma \ref{SCN} more edges of $G$ than $G^*$ are bicolorable chords of $C^*$, which are
      added  to $H_4$ to $H_7$.  By Lemma \ref{OBC}, each such chord can not be crossed with or adjacent to any monochromatic chord, since the resulting graph can be obtained from a graph from $H_{4,1}$ to $H_{6,1}$ in Fig. \ref{F10}. Since all such chords together with $e$ and $e'$ are pairwise parallel relative to $C$,  more such chords are added to $H_{4,1}$ to $H_{6,1}$ by quadrilateral subdivisions on their strong replaceable sets. For $H_{3,3,1}$, more chords are added on bisubdivisions of edges $v_{10}v_{11}$ and $v_4v_5$. For $H_{3,3,0}$, $H_{3,5,0}$, $H_{5,5,0}$ and $H_{5,5,1}$, we can add more chords by similar method.
      So $G$ can be obtained from $H_{4,1}$ to $H_{6,1}$ by bisubdivisions on their replaceable sets and/or by quadrilateral subdivisions on their strong replaceable sets, and
       $G\in \mathcal{G}_1\cup \mathcal{G}_3$.
   \end{proof}
   \section{Proof of Lemma \ref{G1234E}}
   As mentioned in Subsection 4.2 by a computer program we have confirmed that each fundamental graph $H$ in Figs. \ref{F09} to \ref{F12}  satisfies $gf(H)=Af(H)$
   (see Table \ref{tab3}). The further proof is divided into the following two lemmas.

\begin{Lem}
   \label{G123}
   If $G\in \mathcal{G}_0\cup \mathcal{G}_1\cup \mathcal{G}_2$, then $gf(G)=Af(G)$.
\end{Lem}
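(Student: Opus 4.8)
The plan is to reduce everything to the computer-verified base cases recorded in Table \ref{tab3} together with the invariance lemmas for bisubdivision. By definition, every $G\in\mathcal{G}_0\cup\mathcal{G}_1\cup\mathcal{G}_2$ is obtained from one of the fundamental graphs $H$ in Figs. \ref{F09}, \ref{F10} and \ref{F13} by performing bisubdivisions on (some edges of) a replaceable set $S$, and Table \ref{tab3} asserts that each such $H$ satisfies $gf(H)=Af(H)$. The global forcing side is immediate: since bisubdivision leaves the global forcing number unchanged (Lemma \ref{ES}), applying it edge by edge yields $gf(G)=gf(H)$. All the content therefore lies in showing that the same bisubdivisions leave the maximum anti-forcing number unchanged, i.e. $Af(G)=Af(H)$; combined with $gf(H)=Af(H)$ this gives $gf(G)=Af(G)$.

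For the anti-forcing side I would first isolate, for each fundamental graph $H$, a perfect matching $M$ that is anti-forcing-extremal and avoids the replaceable set, namely $af(H,M)=Af(H)$ and $S\subseteq E(H)\setminus M$. This is exactly the role the ``replaceable'' (thin) edges are intended to play, so the property should be read off the drawings directly; for $H_{1,3}$ and $H_{1,5}$, which carry two admissible replaceable sets, one checks each set separately. Granting such an $M$, I would induct over the edges of $S$ that are genuinely subdivided, say $e_1,\dots,e_m$, writing $H=H^{(0)},H^{(1)},\dots,H^{(m)}=G$, where $H^{(j)}$ is obtained from $H^{(j-1)}$ by bisubdividing $e_j$. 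The inductive hypothesis is that $H^{(j)}$ has a perfect matching $M^{(j)}$ with $af(H^{(j)},M^{(j)})=Af(H^{(j)})=Af(H)$ and $e_{j+1},\dots,e_m\notin M^{(j)}$, which holds at $j=0$ with $M^{(0)}=M$.

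The inductive step is where Lemma \ref{Ev-NM} does the work. Since $e_j\notin M^{(j-1)}$, Lemma \ref{Ev-NM} applied to $H^{(j-1)}$, the matching $M^{(j-1)}$ and the edge $e_j$ gives $Af(H^{(j)})=Af(H^{(j-1)})=Af(H)$. Taking $M^{(j)}$ to be the image of $M^{(j-1)}$ under the bijection $g$ of Section \ref{DeBi}, Lemma \ref{ES-A} gives $af(H^{(j)},M^{(j)})\le af(H^{(j-1)},M^{(j-1)})=Af(H)$, while the anti-forcing set constructed in the proof of Lemma \ref{Ev-NM} forces the reverse inequality $af(H^{(j)},M^{(j)})\ge Af(H)$; hence $M^{(j)}$ is again extremal. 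Since $e_{j+1},\dots,e_m$ are untouched by bisubdividing $e_j$ and were absent from $M^{(j-1)}$, they remain absent from $M^{(j)}$, closing the induction. At $j=m$ this yields $Af(G)=Af(H)$, and combining with $gf(G)=gf(H)=Af(H)$ completes the proof.

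The main obstacle I expect is the base-case bookkeeping rather than the induction itself: one must verify, for each of the roughly twenty fundamental graphs in Figs. \ref{F09}, \ref{F10} and \ref{F13}, that an anti-forcing-extremal perfect matching exists which is disjoint from the prescribed replaceable set. Once this ``$S\subseteq E(H)\setminus M$'' property is confirmed (by inspection, or by the same computer search that produced Table \ref{tab3}), the bisubdivision induction above is uniform and routine, and no case analysis on the shape of $G$ is needed beyond recording which fundamental graph it descends from.
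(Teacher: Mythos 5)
Your inductive step is internally sound (Lemma \ref{Ev-NM} together with Lemma \ref{ES-A} does transfer extremality to the image matching $g(M)$ when the subdivided edge lies outside an extremal matching), but the base case you need is false, and this is not bookkeeping: for several fundamental graphs there is \emph{no} anti-forcing-extremal perfect matching disjoint from the replaceable set --- indeed sometimes no perfect matching at all is disjoint from it. Take $H_{1,4}$: it is a $6$-cycle plus three chords, and its only irreplaceable edges are the bicolorable chord $e$, the two monochromatic chords, and the single cycle edge $v_2v_3$. These four edges cover only four of the six vertices, so the remaining two (degree-two) vertices can only be matched by replaceable cycle edges; hence \emph{every} perfect matching of $H_{1,4}$ meets the replaceable set, and the condition $S\subseteq E(H)\setminus M$ is unachievable. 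Concretely, the matching the paper itself uses for $H_{1,4}$ (the one containing $e$ and $v_2v_3$, which is the unique one attaining $af=Af(H_{1,4})=3$; the other perfect matchings have anti-forcing number at most $2$) necessarily completes with a replaceable cycle edge. So for a graph $G\in\mathcal{G}_1$ obtained by bisubdividing exactly that edge, your induction cannot start: Lemma \ref{Ev-NM} requires $e\notin M$ and there is no admissible extremal matching. The same phenomenon occurs for $H_{2,1}$, $H_{3,1}$, $H_{4,1}$, etc. Your premise that avoidability by an extremal matching is ``exactly the role the replaceable edges are intended to play'' misreads the definition: replaceability is about subdivisions not creating forbidden conformal minors (Props.\ \ref{pro1} and \ref{pro2}), and extremal matchings are built around the \emph{chords}, not around avoiding the thin edges.

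The paper circumvents precisely this obstruction by never transferring extremality from $H$ to $G$. For each case it chooses a perfect matching $M$ of the subdivided graph $G$ itself, containing the relevant chords (which are never subdivided), and exhibits $Af(H)$ compatible $M$-alternating cycles of $G$ whose pairwise intersections are chord edges of $M$. Such a certificate survives bisubdivision of matching edges as well as non-matching ones --- a subdivided matching edge merely lengthens the cycle through it --- giving $Af(G)\geq af(G,M)\geq Af(H)$, while Lemma \ref{ES-A} gives $Af(G)\leq Af(H)$ and Lemma \ref{ES} with Table \ref{tab3} handles $gf$. Note also that your inductive step would be genuinely unsafe for matching edges even if you tried to extend it: subdividing a matching edge shared by two compatible alternating cycles replaces the shared edge by a path containing a non-matching edge, which can destroy compatibility and strictly decrease $af(\cdot,g(M))$ (this already happens for $K_4$). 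So an analogue of Lemma \ref{Ev-NM} for edges inside $M$ holds in the present setting only because the compatible cycles share nothing but chords --- which is essentially the case-by-case verification the paper performs and your proposal cannot avoid.
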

\begin{proof}
   We need to show that bisubdivisions on replaceable sets have no effect on the equality.
   By Lemma \ref{ES}, a bisubdivision does not change the global forcing number of a graph.
   So we only consider the change of  maximum anti-forcing number after
   a bisubdivision. We consider three cases as follows.

 {\bf Case 1.} $G\in \mathcal{G}_0$.
If $G$ is obtained from $H_1$,
   let $M$ be the perfect matching of $G$ containing
   crossed white and black chords.
   Then $G$ contains two compatible $M$-alternating cycles,
   corresponding to $v_1v_3v_2v_4v_1$ and $v_1v_3v_4v_2v_1$ in $H_1$,
   which share only the two monochromatic chords.
   By Lemma \ref{ES-A}, $2=Af(H_1)\geq Af(G)\geq af(G,M)\geq 2$,
   which implies that $gf(G)=Af(G)=2$.
   If $G$ is obtained from $H_2$ or $H_3$,
   similarly we have $gf(G)=Af(G)=2$, since it also has crossed white and black chords.

   If $G$ is obtained from $H_4$,
    let $M$ be the perfect matching of $G$ containing two crossed monochromatic chords
   $v_1v_3$ and $v_2v_8$.
   Then $G$ has three compatible $M$-alternating cycles corresponding to
   $v_1v_3v_2v_8v_1$, $v_2v_6v_7v_8v_2$ and
   $v_1v_3v_4v_5v_1$ in $H_4$, which share only monochromatic
   chords in $M$. Similarly, we obtain $gf(G)=Af(G)=3$.

   If $G$ is obtained from $H_5$,
   let $M$ be the perfect matching of $G$ containing two crossed monochromatic chords
   $v_1v_7$ and $v_2v_8$.
   Then $G$ contains three compatible $M$-alternating cycles corresponding to
   $v_1v_7v_8v_2v_1$, $v_1v_7v_6v_5v_1$ and
   $v_2v_3v_4v_{10}v_9v_8v_2$ in $H_5$, which share only two monochromatic
   chords in $M$. So $gf(G)=Af(G)=3$.

   If $G$ is obtained from $H_6$,
   let $M$ be the perfect matching of $G$ containing two crossed monochromatic chords
   $v_5v_{11}$ and $v_6v_{12}$.
   Then $G$ contains three compatible $M$-alternating cycles corresponding to
   $v_6v_7v_8v_2v_1v_{12}v_6$, $v_3v_4v_5v_{11}v_{10}v_9v_3$ and $v_5v_6v_{12}v_{11}v_5$ in $H_6$, which share only $v_5v_{11}$ and $v_6v_{12}$. So $gf(G)=Af(G)=3$.

   If $G$ is obtained from $H_7$,
   let $M$ be the perfect matching of $G$ containing all monochromatic chords.
   Then $G$ contains four compatible $M$-alternating cycles corresponding to $v_1v_2v_6v_5v_1$,
   $v_2v_3v_7v_6v_2$, $v_3v_4v_8v_7v_3$ and $v_4v_5v_1v_8v_4$ in $H_7$,
   which share only monochromatic chords. So $gf(G)=Af(G)=4$.

{\bf Case 2}. $G\in  \mathcal{G}_1$.
If $G$ is obtained from $H_{1,2}$, $H_{1,3}$ or $H_{1,5}$,
then let $M$ be the perfect matching of $G$ containing the unique bicolorable chord. It follows that $G$ contains two compatible $M$-alternating cycles and thus $gf(G)=Af(G)=2$.

   If $G$ is  obtained from $H_{1,1}$, let $M$ be the perfect matching of $G$ containing all chords. Then $G$ contains three compatible $M$-alternating cycles and thus $gf(G)=Af(G)=3$.

   If $G$ is obtained from $H_{1,4}$, let $M$ be the perfect matching of $G$ containing the unique bicolorable chord. Since $v_2v_3$ is an irreplaceable edge of $H_{1,4}$, $v_2v_3\in M$. Then $G$ contains three compatible $M$-alternating cycles and thus $gf(G)=Af(G)=3$.

If $G$ is obtained from $H_{2,1}$, $H_{3,2}$ or $H_{3,3}$,
then let $M$ be a perfect matching of $G$ containing a pair of crossed black and white chords and the unique bicolorable chord; if $G$ is obtained from $H_{3,1}$, then let $M$ be a perfect matching of $G$ containing chords $\{v_1v_5,v_4v_6\}$ in $H_{3,1}$. We can check that $G$ contains three compatible $M$-alternating cycles and thus $gf(G)=Af(G)=3$.

If $G$ is obtained from $H_{4,1}$ to $H_{4,3}$, then let $M$ be a perfect matching of $G$ containing chords
$\{v_4v_6,v_5v_{11},v_7v_{10}\}$ in $H_{4,1}$,
$\{v_1v_7,v_2v_8,v_9v_{12}\}$ in $H_{4,2}$, or
$\{v_4v_6,v_5v_7,v_1v_{10}\}$ in $H_{4,3}$.
If $G$ is obtained from $H_{5,1}$ to $H_{6,1}$, then let $M$ be a perfect matching of $G$ containing chords
$\{v_1v_5,v_2v_6,v_9v_{12}\}$ in $H_{5,1}$,
$\{v_1v_5,v_4v_{10},v_{11}v_{14}\}$ in $H_{5,2}$ or
$\{v_1v_7,v_2v_8,v_{11}v_{14}\}$ in $H_{6,1}$.
Obviously $G$ contains four compatible $M$-alternating cycles and thus $gf(G)=Af(G)=4$.

{\bf Case 3.} $G\in \mathcal{G}_2$. Then $G$ is obtained from $H_{1,4,5}$ or $H_{3,1,1}$. Let $M$ be a perfect matching of $G$ containing two crossed monochromatic chords. Then $G$ contains three compatible $M$-alternating cycles. It follows that $gf(G)=Af(G)=3$.
\end{proof}

\begin{Lem}
   \label{G24}
   If $G\in \mathcal{G}_3$, then $gf(G)=Af(G)$.
\end{Lem}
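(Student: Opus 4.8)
The plan is to reduce everything to two facts already in hand: the fundamental graphs satisfy $gf=Af$ (Table \ref{tab3}), and every graph in $\mathcal G_1$ satisfies $gf=Af$ (Lemma \ref{G123}). The guiding observation is that for a graph in $\mathcal G_3$ the quadrilateral subdivision (on the strong replaceable set) and the bisubdivisions (on the remaining replaceable edges) act on disjoint edge sets, hence may be performed in either order; I would exploit this to do the bisubdivisions first and the square-adding last. Accordingly I would split into the case where $G$ is obtained from $H\in\{H_{3,4},H_{4,5}\}$ and the case where $G$ is obtained from some $H_{i,j}$ (other than $H_{1,2}$).

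For $G$ obtained from $H\in\{H_{3,4},H_{4,5}\}$ there are no strong replaceable sets, so only bisubdivisions occur and I would argue exactly as in Lemma \ref{G123}. By Lemma \ref{ES}, $gf(G)=gf(H)=3$, and by Lemma \ref{ES-A}, $Af(G)\le Af(H)=3$. For the reverse bound I would fix a perfect matching $M$ of $G$ containing the crossed monochromatic chords and exhibit three compatible $M$-alternating cycles meeting pairwise only in those (irreplaceable) chords; since the chords are never bisubdivided, compatibility is preserved, giving $af(G,M)\ge 3$ and hence $gf(G)=Af(G)=3$.

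For $G$ obtained from $H=H_{i,j}$ I would first perform only the bisubdivisions on the replaceable edges outside the strong replaceable set $S$, producing a graph $G_1\in\mathcal G_1$; by Lemma \ref{G123}, $gf(G_1)=Af(G_1)$, and that proof supplies an explicit maximum anti-forcing matching $M_1$ of $G_1$. Since $S$ was left untouched, $G$ is precisely the quadrilateral subdivision of $G_1$ obtained by adding squares on $S$ (together with the attendant bisubdivisions of the new paths). Viewing every remaining edge of $E(G_1)\setminus M_1$ as carrying $k_i=0$, this is a quadrilateral subdivision of $G_1$ on all of $E(G_1)\setminus M_1$, so Corollary \ref{SS} applies and yields $gf(G)=Af(G)$ directly from $gf(G_1)=Af(G_1)$.

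The step demanding genuine care is confirming that Corollary \ref{SS} is legitimately invoked, i.e.\ that $S\subseteq E(G_1)\setminus M_1$ for the matching $M_1$ produced by Lemma \ref{G123}. I would verify this graph-by-graph: for $H_{1,4}$, for instance, $M_1$ contains the irreplaceable edge $v_2v_3$, which immediately forces the strong replaceable set $\{v_1v_2,v_3v_4\}$ into $E(G_1)\setminus M_1$, and I expect each strong replaceable set to be adjacent to such a forced irreplaceable edge in $M_1$. Checking these incidences across the relevant fundamental graphs $H_{i,j}$, together with the explicit compatible-cycle constructions needed for $H_{3,4}$ and $H_{4,5}$, constitutes the main obstacle; it is a finite, case-by-case verification rather than a single hard estimate.
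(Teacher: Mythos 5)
Your proposal is correct and follows essentially the same route as the paper's proof: a direct argument for $H_{3,4}$ and $H_{4,5}$ (global forcing number via Lemma \ref{ES} and Table \ref{tab3}, anti-forcing lower bound via three compatible alternating cycles), and for the remaining $H_{i,j}$ a reduction performing the bisubdivisions first to land in $\mathcal{G}_1$, invoking Lemma \ref{G123}, checking that the strong replaceable set misses the optimal matching, and finishing with Corollary \ref{SS}. The only cosmetic divergence is that for $H_{3,4}$ and $H_{4,5}$ the paper takes the matching containing the two \emph{bicolorable} chords rather than the monochromatic ones, and your explicit justification that Corollary \ref{SS} applies (via $k_i=0$ on the untouched edges of $E(G_1)\setminus M_1$) is actually spelled out more carefully than in the paper.
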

\begin{proof}
   If $G$ is  obtained from $H_{3,4}$ or $H_{4,5}$ (see Fig. \ref{F12}),
   then $gf(G)=gf(H_{3,4})=gf(H_{4,5})=3$ by Lemma \ref{ES} and Table \ref{tab3}.
   Let $M$ be the perfect matching of $G$ containing the two bicolorable chords.
   Then $G$ contains three compatible $M$-alternating cycles,
   which implies that $Af(G)\geq 3$.
   By Theorem \ref{Gf>Af}, $gf(G)=Af(G)=3$.

   If $G$ is obtained from a graph in $H_{1,1}$ to $H_{6,1}$ in Fig. \ref{F10}, other than $H_{1,2}$, by performing
   bisubdivision on their replaceable sets and quadrilateral subdivisions on its strong replaceable sets, then each strong replaceable set of a graph in $H_{1,1}$ to $H_{3,3}$, other than $H_{1,2}$, does not intersect the corresponding perfect matching attaining maximum anti-forcing number defined in Lemma \ref{G123}. As performing a bisubdivision on a replaceable set of $H_{1,1}$ to $H_{6,1}$ other than $H_{1,2}$ results in a graph in $\mathcal{G}_1$. By Case 2 in Lemma \ref{G123}, each graph in $\mathcal{G}_1$ has equal global and maximum anti-forcing number. By Corollary \ref{SS}, $gf(G)=Af(G)$.
\end{proof}

\end{document}